\definecolor{myc}{cmyk}{0.0009,0.8,0.8,0.00}
\newtheorem{theorem}{Theorem}[section]
\newtheorem{lem}[theorem]{Lemma}
\newtheorem{pro}[theorem]{Proposition}
\newtheorem{defi}[theorem]{Definition}
\newtheorem{oss}[theorem]{Remark}
\numberwithin{equation}{section}
\newtheorem*{nb}{\footnotesize {N.B}}
\def\p{\partial}
\def\io{{\infty}}
\def\range{\operatorname{ran}}
\def\diag{\operatorname{diag}}
\def\re{\operatorname{Re}}
\def\im{\operatorname{Im}}
\def\Id{\operatorname{Id}}
\def\N{\mathbb N}
\def\Z{\mathbb Z}
\def\R{\mathbb R}
\def\C{\mathbb C}
\def\poscal#1#2{\langle#1,#2\rangle}
\def\poi#1#2{\left\{#1,#2\right\}}
\def\norm#1{\Vert#1\Vert}
\def\val#1{\vert#1\vert}
\def\l2{L^2(\R^{n})}
\def\L2{L^2(\R^{2n})}
\def\supp{\operatorname{supp}}
\def\w#1{{#1^{\text {Wick}}}}
\def\RZ{\R^{2n}}
\def\trace{\operatorname{trace}}
\def\hs{{\hskip15pt}}
\def\vs{\vskip.3cm}
\def\mat22#1#2#3#4{\begin{pmatrix}#1&#2\\ #3&#4\end{pmatrix}}
\def\rank{\operatorname{rank}}
\def\ep{\varepsilon}
\def\Xint#1{\mathchoice
   {\XXint\displaystyle\textstyle{#1}}%
   {\XXint\textstyle\scriptstyle{#1}}%
   {\XXint\scriptstyle\scriptscriptstyle{#1}}%
   {\XXint\scriptscriptstyle\scriptscriptstyle{#1}}%
   \!\int}
\def\XXint#1#2#3{{\setbox0=\hbox{$#1{#2#3}{\int}$}
     \vcenter{\hbox{$#2#3$}}\kern-.5\wd0}}
\def\fint{\Xint-}
\def\beq{\begin{equation}}
\def\eeq{\end{equation}}
\begin{document}
\title[             Anisotropic Bose-Einstein Condensates ]{Fast rotating
 Bose-Einstein condensates  in an asymmetric trap}
\author[             A. Aftalion]{Amandine Aftalion}
\begin{address}{Amandine Aftalion, CMAP, Ecole Polytechnique, CNRS, 91128 Palaiseau cedex, France}
\email{amandine.aftalion@polytechnique.edu}
\urladdr{http://www.cmap.polytechnique.fr/~aftalion/}
\end{address}
\author[             X. Blanc]{Xavier Blanc}
\begin{address}{Xavier Blanc, Universit{\'e} Pierre et Marie Curie-Paris6, UMR
  7598, laboratoire Jacques-Louis Lions, 175 rue du Chevaleret, Paris
  F-75013 France}
\email{blanc@ann.jussieu.fr}
\urladdr{http://www.ann.jussieu.fr/~blanc/}
\end{address}
\author[             N. Lerner]{Nicolas  Lerner}
\begin{address}{Nicolas Lerner,
Projet analyse fonctionnelle, Institut de Math\'ematiques de Jussieu,
Universit\'e Pierre-et-Marie-Curie (Paris 6),
175 rue du Chevaleret, 75013 Paris, France.}
\email{lerner@math.jussieu.fr}
\urladdr{http://www.math.jussieu.fr/~lerner/}
\end{address}
\keywords{Bose-Einstein condensates;
Bargmann spaces;
Metaplectic transformation;
Theta functions;
Abrikosov lattice}
\subjclass[2000]{82C10 (33E05 35Q55 46E20 46N55 47G30 ) }
\date{\today}
\begin{abstract} We investigate the effect of the anisotropy of a
harmonic trap on the behaviour of a fast rotating Bose-Einstein
condensate. This is done in the framework of the 2D Gross-Pitaevskii equation
 and requires a symplectic reduction of the quadratic form defining the
 energy. This reduction allows us to simplify the energy on a Bargmann space
 and study the
 asymptotics of large rotational velocity.
 We characterize two regimes of velocity and anisotropy; in the first one where
  the behaviour is similar to the isotropic case, we construct an
  upper bound: a hexagonal Abrikosov
 lattice of vortices, with an inverted parabola profile.
The second regime deals with very large velocities, a case in which
we prove that the ground state does
 not display vortices in the bulk, with a 1D limiting problem.
 In that case,
    we show that the coarse grained
atomic density behaves like an inverted parabola with large radius
in the deconfined direction but  keeps a fixed profile given by a
Gaussian in the other direction.
The features of this second regime appear as new phenomena.
\end{abstract}
\maketitle
{\footnotesize\baselineskip=0.72\normalbaselineskip
\tableofcontents
}
\section{Introduction}
Bose-Einstein condensates (BEC) are a new phase of matter where
various aspects of macroscopic quantum physics can be studied. Many
experimental and theoretical works have emerged in the past ten
years.
 We refer to the monographs by C.J.Pethick-H.Smith \cite{PS},
L.Pitaevskii-S.Stringari
 \cite{PiSi} for more details on the physics and to A.Aftalion \cite{A}  for the mathematical aspects.
 Our work is motivated by experiments in
  the  group of  J.Dalibard \cite{Madison00}  on rotating
  condensates: when  a condensate is rotated
  at a sufficiently large velocity,
   a superfluid behaviour is detected with the observation
   of quantized
  vortices. These vortices arrange themselves on a lattice, similar
 to Abrikosov lattices in superconductors \cite{Abrikosov}. This fast
 rotation regime is of interest for its analogy with Quantum Hall physics
 \cite{BSSD,tlho,wbp}.
\par
 In a previous work, A.Aftalion, X.Blanc and F.Nier \cite{MR2271933}
 have addressed
 the mathematical aspects of fast rotating condensates in harmonic isotropic
 traps and gave a mathematical description of the observed vortex
 lattice. This was done through the minimization of the Gross-Pitaevskii energy and the introduction of Bargmann spaces to describe
 the lowest Landau level sets of states. Nevertheless, the experimental device
 leading to the realization of a rotating condensate
  requires an anisotropy of the  trap holding the atoms, which was
  not taken into account in \cite{MR2271933}. Several physics
 papers have addressed the behaviour of anisotropic condensates
 under rotation and its similarity or differences
  with isotropic traps. We refer the reader to the paper by
 A.Fetter \cite{F}, and to the related works \cite{oktel,sanchez-lotero,sinha}.
 The aim of the present article is
 to analyze the effect of anisotropy on the energy
 minimization and the vortex pattern, and in particular to
 derive a mathematical study of some of Fetter's computations
  and conjectures.
  Two different situations emerge according to the values of
  the parameters: in one case,
 the behaviour is similar to the isotropic case with a triangular
 vortex lattice; in the other case, for very large velocities, we have found a new regime
 where there are no vortices, and a full mathematical analysis
 can be performed, reducing the minimization  to a 1D problem.
The existence of this new regime was apparently not predicted in the physics
  literature.
  This feature relies on the analysis of the bottom of the
   spectrum of a specific  operator whose positive lower bound
    prevents the
   condensate from shrinking in one direction,
   contradicting some heuristic explanations present in \cite{F}.
   Our analysis is based on the symplectic
 reduction of the quadratic form defining the Hamiltonian (inspired
 by the computations of Fetter \cite{F}), the characterization
 of a lowest Landau level adapted to the anisotropy and finally the study of
 the reduced energy in this space.
 \subsection{The physics problem and its mathematical formulation}Our problem comes from  the study  of the
3D Gross-Pitaevskii energy functional for a fast rotating
Bose-Einstein condensate with $N$ particles of mass $m$ given by
\begin{equation}
\mathcal E_{GP} (\phi)=\poscal{\mathcal H
\phi}{\phi}_{L^2(\R^3)}+\frac{g_{3d}N}{2}\norm{\phi}^4_{L^4(\R^3)},
\end{equation}
where the operator $\mathcal H$ is
\begin{equation}
\mathcal H=\frac{1}{2m}(h^2D_{x}^2+h^2
D_{y}^2+h^2D_{z}^2)+\frac{m}{2}\bigl(\omega_{x}^2
x^2+\omega_{y}^2y^2+\omega_z^2z^2\bigr)-\Omega(x hD_{y}-yh D_{x}),
 \end{equation}
 where  $h$ is the Planck constant, $D_{x}=(2i\pi)^{-1}\p_{x}$,
$\omega_{j}$ is the frequency along the $j$-axis, $\Omega$ is the
rotational velocity, and the coupling constant $g_{3d}$ is a positive
parameter.
\par
In the particular case where $\omega_x=\omega_y$, the fast rotation
regime corresponds to the case where $\Omega$ tends to $\omega_x$
 and the condensate expands in the transverse direction. It
has been proved \cite{aft-bl} that the minimizer can be described at
leading order by a 2D function $\psi(x,y)$,  multiplied by the
ground state of the harmonic oscillator in the $z$-direction (the
operator $h^2/(2m)D_z^2+m\omega_z^2z^2/2$), which is equal to
$({2m\omega_{z} h^{-1}})^{1/4} e^{-\pi m\omega_z h^{-1}z^2}.$ This
property is still true in the anisotropic case if $\omega_y\ll
\omega_z$. The reduced 2D energy to study is thus
\begin{equation}
\mathcal E (\psi)=\poscal{\mathcal H_{0}
\psi}{\psi}_{L^2(\R^2)}+\frac{g_{2d}N}{2}\norm{\psi}^4_{L^4(\R^2)},
\end{equation}
where the operator $\mathcal H_{0}$ is
\begin{equation}
\mathcal H_{0}=\frac{1}{2m}(h^2D_{x}^2+h^2
D_{y}^2)+\frac{m}{2}\bigl(\omega_{x}^2
x^2+\omega_{y}^2y^2\bigr)-\Omega(x hD_{y}-yh D_{x}),
 \end{equation}
 and the coupling constant $g_{2d}$ takes into account the integral of the
 ground state in the $z$-direction:
\begin{equation}\label{1.constg}
g_{2d}N=\frac{gh^2}{m},\quad\text{where $g$ is dimensionless (and $>0$).}
\end{equation}
Since $h$ has the dimension energy $\times$ time, it is consistent
to assume that the wave function $\psi$ has the dimension {
1/length}, with the normalization $ \norm{\psi}_{L^2(\R^2)}=1. $ We
define the mean square oscillator frequency $\omega_{\perp}$ by
$$
\omega_{\perp}^2=\frac12(\omega_{x}^2+\omega_{y}^2)
$$
and the function  $u$ by
\begin{equation}
\psi(x,y)=h^{-1/2}m^{1/2}\omega_{\perp}^{1/2}
u(h^{-1/2}m^{1/2}\omega_{\perp}^{1/2}  x,h^{-1/2}m^{1/2}
\omega_{\perp}^{1/2} y),
\end{equation}
so that
$$
\norm{u}_{L^2(\R^2)}=\norm{\psi}_{L^2(\R^2)}=1,\quad
g_{2d}N\norm{\psi}_{L^4(\R^2)}^4=gh\omega_{\perp}\norm{u}_{L^4(\R^2)}^4.
$$
We also note  that the dimension of
$h^{-1/2}m^{1/2}\omega_{\perp}^{1/2}$ is 1/length, so that
$$
x_{1}=h^{-1/2}m^{1/2}\omega_{\perp}^{1/2} x,\quad
x_{2}=h^{-1/2}m^{1/2}\omega_{\perp}^{1/2} y,\quad
u(x_{1},x_{2})\quad\text{are dimensionless.}
$$
Assuming $\omega_{x}^2\le \omega_{y}^2$, we use the dimensionless parameter
$\nu$ to write
$$\omega_{x}^2=(1-\nu^2)\omega_{\perp}^2,\quad
\omega_{y}^2=(1+\nu^2)\omega_{\perp}^2,
$$
and we get immediately
\begin{multline*}
\frac 1{h\omega_{\perp}}\mathcal E(\psi)=
\frac12\norm{D_{1}u}_{L^2(\R^2)}^2+\frac12\norm{D_{2}u}_{L^2(\R^2)}^2
+\frac12(1-\nu^2) \norm{x_{1}u}_{L^2(\R^2)}^2 +\frac12(1+\nu^2)
\norm{x_{2}u}_{L^2(\R^2)}^2
\\-\frac{\Omega}{\omega_{\perp}}
\poscal{(x_{1}D_{2}-x_{2}D_{1})u}{u}_{L^2(\R^2)} +\frac g2
\norm{u}_{L^4(\R^2)}^4.
\end{multline*}
Finally, we have
\begin{gather}
\frac 1{h\omega_{\perp}}\mathcal E(\psi):= E_{GP}(u)=
\poscal{Hu}{u}+\frac g2 \norm{u}_{L^4(\R^2)}^4,\label{1.gpen}
\\
2H= D_{1}^2+D_{2}^2+(1-\nu^2) x_{1}^2+(1+\nu^2) x_{2}^2-2\omega
(x_{1}D_{2}-x_{2}D_{1}),\quad \omega=\frac{\Omega}{\omega_{\perp}},
\label{1.hami}
\end{gather}
where  $\omega, \nu, u, g$ are all dimensionless and
$\norm{u}_{L^2(\R^2)}=1$. The minimization of this functional is the mathematical problem that we
address in this paper. The Euler-Lagrange equation for the
minimization of $ E_{GP}(u)$, under the constraint
 $\norm{u}_{L^2(\R^2)}=1$,
 is
 \begin{equation}\label{1.eule}
H u+g \val {u}^2 u=\lambda u,
\end{equation}
where $\lambda$ is the Lagrange multiplier. We shall always assume
that $ \Omega^2\le \omega_{x}^2, $ i.e. $\omega^2+\nu^2\le 1$ and
define the dimensionless parameter $\ep$ by
\begin{equation}\label{1.thre} \omega^2+\nu^2+\ep^2=1.
\end{equation} The fast
rotation regime occurs when the ratio $\Omega^2/\omega_{x}^2$ tends
to $1_{-}$, i.e. $\ep$ tends to 0.\par\noindent
{\it {\bf Summarizing and reformulating
our reduction,} we have
\begin{equation}\label{1.engrpi}
E_{GP}(u)=\frac12\poscal{q_{\omega,\nu,\ep}^w
u}{u}_{L^2(\R^2)}+\frac g2\int_{\R^2} \val u^4 dx,
\end{equation}
where $q_{\omega,\nu, \ep}$
 is the quadratic form
\begin{equation}\label{1.fetter}
q_{\omega,\nu,\ep}(x_{1},x_{2},\xi_{1},\xi_{2})=
\xi_{1}^2+\xi_{2}^2+(1-\nu^2) x_{1}^2+ (1+\nu^2)
x_{2}^2-2\omega(x_{1}\xi_{2}-x_{2}\xi_{1}),
\end{equation}
which depends on the real parameters $\omega, \nu, \ep$ such
that\footnote{Of course there is no loss of generality assuming that
$\epsilon,\nu$ are nonnegative parameters; we may also assume that
$\omega\ge 0$, since the change of function $u(x_{1},x_{2})\mapsto
u(-x_{1},x_{2})$ preserves the $L^4$-norm, is unitary in $L^2$,
corresponding to the symplectic transformation $(x_{1},
x_{2},\xi_{1},\xi_{2})\mapsto (-x_{1}, x_{2},-\xi_{1},\xi_{2})$ and
leads to the same problem where $\omega$ is replaced by $-\omega$.}
 \eqref{1.thre} holds.
Here  $q^w_{\omega,\nu,\ep}$  is the  operator  with Weyl  symbol
$q_{\omega,\nu,\ep},$ that is:
\begin{equation}\label{1.fetter1}
q^w_{\omega,\nu,\ep}=D_{1}^2+D_{2}^2+(1-\nu^2) x_{1}^2+ (1+\nu^2)
x_{2}^2-2\omega(x_{1}D_{2}-x_{2}D_{1}),
\end{equation}
where $D_j = \partial_j / (2i\pi).$ We would like  to minimize the
energy $E_{GP}(u)$ under the constraint $\norm{u}_{L^2}=1$ and
understand what is happening when  $\ep\rightarrow 0$.}
\subsection{The isotropic Lowest Landau Level}
When the harmonic trap is isotropic, i.e. when $\nu=0$, it turns out
that, since $\omega^2+\ep^2=1$,
\begin{equation}\label{1.firdia}
q = q_{\omega,0,\ep} = (\xi_{1}+\omega x_{2})^2+ (\xi_{2}-\omega x_{1})^2
+\ep^2(x_{1}^2+x_{2}^2)
\end{equation}
so that
$$
E_{GP}(u)=
\frac12\norm{(D_{1}+\omega x_{2})\psi+i(D_{2}-\omega x_{1}) u}^2
+\frac{\omega}{2\pi}\norm{u}^2+\frac{\ep^2}{2}\norm{ \val x u}^2+\frac{g}{2}\int \val u^4 dx.
$$
We note that, with $z=x_{1}+ix_{2}$,
$$
D_{1}+\omega x_{2}+i(D_{2}-\omega x_{1})=\frac{1}{i\pi}\bar\p-i\omega z=\frac{1}{i\pi}(\bar \p+\pi\omega z),
$$
hence the first term of the energy is minimized (and equal to $0$) if $u\in
LLL_{\omega^{-1}}$, where
\begin{equation}\label{1.isolll}
LLL_{\omega^{-1}}=\{u\in L^2(\R^2), u(x)= f(z) e^{-\pi \omega\val z^2}\}=\ker(\bar \p+\pi\omega z)\cap L^2(\R^2),
\end{equation}
with $f$ holomorphic. We expect the condensate to have a large
expansion, hence the term $\int |u|^4$ to be small. Thus, it is
natural to minimize the energy $E_{GP}$ in $LLL_{\omega^{-1}}.$ It
has been proved in \cite{aft-bl} that the restriction to $
LLL_{\omega^{-1}}$ is a good approximation of the original problem,
i.e. the minimization of $E_{GP}$ in $L^2(\R^2)$.
We get for $u\in
LLL_{\omega^{-1}}, \norm{u}_{L^2}=1,$
$$
E_{GP}(u)=
\frac12\norm{\underbrace{(D_{1}+\omega x_{2})u+i(D_{2}-\omega x_{1})u}_{(i\pi)^{-1}(\bar\p+\pi\omega z)u=0}}^2
+\frac{\omega}{2\pi}+\frac{\ep^2}{2}\norm{ \val x u}^2+\frac g2\int \val u^4 dx,
$$
and with $u(x)= \upsilon\bigl((\omega\ep)^{1/2} x\bigr)(\omega\ep)^{1/2}$
(unitary change in $L^2(\R^2)$),
$$
E_{GP}(u)=
\frac{\omega}{2\pi}+\frac{\ep}{2\omega}\left(
\int \val y^2\val{\upsilon(y)}^2dy
+{\omega^2  g}\int {\val{ \upsilon(y)} }^4dy\right).$$
The minimization problem of $E_{GP}(u)$
in the space $LLL_{\omega^{-1}}$ is thus reduced to study
\begin{equation}\label{enabn}
E_{LLL}(\upsilon)=\norm{\val x \upsilon}^2_{L^2}+ \omega^2 g\norm{\upsilon}^4_{L^4},
 \quad
\upsilon\in LLL_{\ep},
\end{equation}
i.e. with $z=x_{1}+ix_{2}$, $ v(x_{1}, x_{2})= f(z) e^{-\pi
\ep^{-1}\val z^2}, $ $f$ entire (and $v\in L^2(\R^2)$). This program has
been carried out in the paper \cite{MR2271933} by A. Aftalion, X.
Blanc, F. Nier.
In the isotropic case, a key point is the fact that the symplectic
diagonalisation of the quadratic Hamiltonian is rather simple: in
fact revisiting the formula \eqref{1.firdia}, we obtain easily
\begin{multline}\label{1.isodia}
q= \overbrace{(\frac{1-\omega}{2})(\xi_{1}-x_{2})^2}^{\eta_{1}^2}
+\overbrace{(\frac{1-\omega}{2})(\xi_{2}+x_{1})^2}^{\mu_{1}^2
y_{1}^2}
\\+\underbrace{(\frac{1+\omega}{2})(\xi_{1}+x_{2})^2}_{\eta_{2}^2}
+\underbrace{(\frac{1+\omega}{2})(\xi_{2}-x_{1})^2}_{\mu_{2}^2y_{2}^2},
\end{multline}
with
{\small\begin{equation}
\label{1.isosym}\left\{
\begin{matrix}
\eta_{1}=2^{-1/2}(1-\omega)^{1/2}(\xi_{1}-x_{2}),& \mu_{1}=1-\omega,
&y_{1}=
2^{-1/2}(1-\omega)^{-1/2}(\xi_{2}+x_{1}),\\ \\
\eta_{2}=2^{-1/2}(1+\omega)^{1/2}(\xi_{1}+x_{2}),& \mu_{2}=1+\omega,
&y_{2}= 2^{-1/2}(1+\omega)^{-1/2}(x_{1}-\xi_{2}),
\end{matrix}\right.
\end{equation}}
so that  the linear forms $(y_{1},y_{2},\eta_{1}, \eta_{2})$ are
symplectic coordinates in $\R^4$, i.e.
$$
\poi{\eta_{1}}{y_{1}}=\poi{\eta_{2}}{y_{2}}=1, \quad
\poi{\eta_{1}}{\eta_{2}}=\poi{\eta_{1}}{y_{2}}=\poi{\eta_{2}}{y_{1}}=\poi{y_{1}}{y_{2}}=0.
$$
In \cite{MR2271933}, an upper bound for the energy is constructed with a
test function which is also an ``almost" solution to the Euler-Lagrange equation
corresponding to the minimization of \eqref{enabn} in
$LLL_{\ep}$. This almost solution displays a triangular
vortex lattice in a central region of the condensate and is
constructed using a Jacobi Theta function, which is modulated by an inverted
parabola profile and projected onto $LLL_\ep$.
\subsection{Sketch of some preliminary reductions in the anisotropic case}
The analysis of the reduced energy in the anisotropic case
yields two different  situations: one is  similar to the isotropic
case and the other one  is quite different, without vortices.
To tackle the non-isotropic case where $\nu>0$ in \eqref{1.fetter1},
 one would like to determine a space playing the role of
 the $LLL$ and taking into account the anisotropy.
\subsubsection*{Step 1. Symplectic reduction of the quadratic form $q_{\omega,\nu,\epsilon}$.}
Given the quadratic form $q_{\omega,\nu,\ep}$ \eqref{1.fetter},
identified with a $4\times 4$ symmetric matrix, we define its
fundamental matrix by the identity
$F=-\sigma^{-1}q_{\omega,\nu,\ep}=\sigma q_{\omega,\nu,\ep}$ where
$$\sigma=\begin{pmatrix}
0&I_{2}\\
-I_{2}&0
\end{pmatrix}\quad\text{ is the symplectic matrix given in $2\times 2$ blocks.}
$$ The properties of the eigenvalues
and eigenvectors of $F$ allow to find a symplectic reduction for $q_{\omega,\nu,\ep}$.
\subsubsection*{Step 2. Determination of the anisotropic $LLL$.}
The anisotropic
equivalent of the $LLL$
can be determined explicitely,
thanks to the results of the first step.
We find that it is the subspace
of functions $u$ of $L^2(\R^2)$ such that
 \begin{equation*}
f\bigl(x_{1}+i\beta_{2}x_{2}\bigr)
\exp{\left(-\frac{\gamma\pi}{4\beta_{2}}\Bigl[
x_{1}^2(1-\frac{\nu^2}{2\alpha})+
(\beta_{2}x_{2})^2(1+\frac{\nu^2}{2\alpha})\Bigr]
\right)}
\exp{(-i\frac{\pi\nu^2\gamma }{4\alpha }x_{1}x_{2})},
\end{equation*} where $f$ is entire.
The positive parameters $\alpha,\gamma,\beta_{2}$ are defined in the
text and
are explicitely known in terms of $\omega, \nu$. We also
 determine an
operator $M$, which  can be used to give an explicit expression for
the isomorphism between $L^2(\R)$ and the anisotropic $LLL$ as well as
to express the Gross-Pitaevskii energy in the new symplectic coordinates.
\subsubsection*{Step 3. Rescaling.}
Introducing a new set of parameters
($\omega, \nu,\epsilon$ are positive satisfying
\eqref{1.thre}, $g>0$ given by \eqref{1.constg}),
 \begin{equation}\label{1.kappa2}
\kappa_{1}^2=(2\nu^2+\epsilon^2)\bigl(1+\frac{2\nu^2}{\alpha-\nu^2+\omega^2}\bigr),\quad \alpha= \sqrt{\nu^4+ 4\omega^2},
\quad
g_{1}=g\frac{\alpha+2\omega^2+\nu^2}{2\alpha},
\end{equation}
\begin{equation}\label{1.morek}
\kappa=\frac{\kappa_1}{\beta_2},\
g_0=\frac{g_1\gamma^2}{4\beta_2},
\
\gamma=\frac{2\alpha}{\omega},
\beta_{2}=\frac{2\omega \mu_{2}}{\alpha+2\omega^2+\nu^2},
\quad
\mu_{2}=1+\omega^2+\alpha,
\end{equation}
we show that,
after some rescaling, the minimization of
the full energy $E_{GP}(u)$
of \eqref{1.engrpi}
can be reduced to the minimization of\ \beq
 \label{eq:nrjLLL2}
 {E}(u)=\int_{\mathbb R ^2} \frac12 (\ep^2 x_1^2+ \kappa^2 x_2^2 )|u|^2
 +\frac {g_{0}}2
  |u|^4.\eeq
   on the
 space  \beq
 \label{lll} \Lambda_{0}=\{ u\in L^2(\R^2),\ u(x_1,x_2)= f(z)e^{-\pi |z|^2/2},
\ f \hbox{ holomorphic, } z=x_1+ix_2\}.\eeq
The point is that,
after some scaling, we are able to come back to an isotropic space.
The orthogonal projection $\Pi_{0}$
of $L^2(\R^2)$ onto $\Lambda_{0}$ is explicit and simple:
\begin{equation}
  \label{eq:projLLL}
 (\Pi_{{0}} u)(x) =  \int_{\R^2} e^{-\frac{\pi}2 |x-y|^2 + i\pi
    \left(x_2 y_1 - y_2 x_1\right)} u(y) dy.
\end{equation}
We  are thus reduced to
the following problem: with $E(u)$ given by
\eqref{eq:nrjLLL2},
study
\begin{equation}
  \label{eq:minLLL}
  I(\ep,\kappa) = \inf\bigl\{ E(u), \ u \in \Lambda_{0}, \
   \norm{u}_{L^2(\R^2)} = 1 \bigr\}.
\end{equation}
The minimization of $E$ without the holomorphy constraint
yields \begin{equation}\label{invpar}|u|^2 = \frac{2}{\pi R_1 R_2}(1
- \frac{x_1^2}{R_1^2} -
  \frac{x_2^2}{R_2^2} ),\hbox{
where }R_1 =\left(\frac{4g_0 \kappa}{\pi\ep^3}\right)^{1/4},\quad
R_2 = \left(\frac{4g_0 \ep}{\pi
\kappa^3}\right)^{1/4}.\end{equation} As $\ep$ tends to 0, $R_1$
always tends to infinity
(in fact $R_{1}\gtrsim \ep^{-1/2}$), but the behaviour of $R_2$ depends on the
respective values of $\ep$ and $\kappa$, that is of $\ep$
   and  $\nu$.
   \subsubsection*{Step 4. Sorting out the various regimes.}
   Recalling that the positive parameter $\nu$ stands for the anisotropy,
   we find
two  regimes:
\par
$\bullet$ $\nu\ll \ep^{1/3}$ (weak anisotropy): $R_2\to \infty$
(in fact, $R_{2}^{4/3}\approx \min(\ep^{-2/3}, \ep^{1/3}\nu^{-1})$). Numerical simulations ({\sc Figure} \ref{figanv})
 show a triangular vortex lattice. The behaviour is similar to the isotropic case except that the
inverted parabola profile (\ref{invpar}) takes into account the
anisotropy. We will construct  an approximate minimizer.
\begin{figure}[h]
\includegraphics[width=1\textwidth]{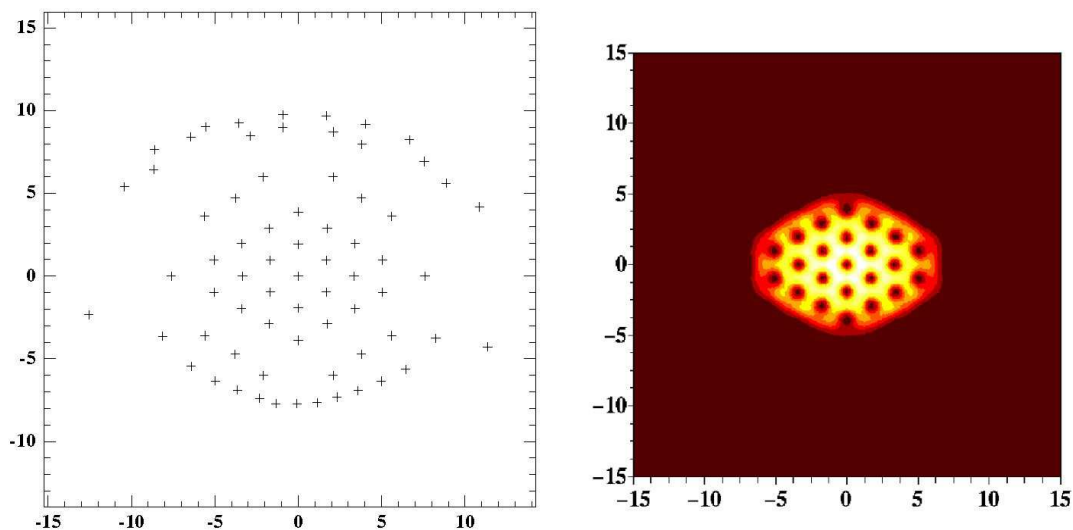}
\caption{\small Plot of the zeroes of the
 minimizer (left) and the density (right) for $\ep^2=0.002$, $\nu=0.03$. Triangular vortex
 lattice in an anisotropic trap.}\label{figanv}
\end{figure}
\par
$\bullet$  $\nu\gg \ep^{1/3}$ (strong anisotropy): $R_2\to 0$ (in fact
$R_{2}^{4/3}\approx \ep^{1/3}\nu^{-1}$). Numerical simulations
({\sc Figure} \ref{figannv})
 show that there are no vortices in the bulk, the behaviour is an inverted parabola
in the $x_1$ direction and a fixed Gaussian in the $x_2$ direction.
 Thus, the size of the condensate does not shrink in the $x_2$
 direction and (\ref{invpar}) is not a good approximation of the
 minimizer.
The shrinking of the condensate in the $x_2$ direction is not
allowed in $\Lambda_{0}$ (see \eqref{lll})
 because the operator $x_2^2$ is
bounded from below in that space by a positive constant
 and the first eigenfunction is  a Gaussian in the
$x_2$ direction.
 We find an asymptotic 1D problem  (upper and
lower bounds match) which yields a separation of variables.
\begin{figure}[h]
\includegraphics[width=1\textwidth]{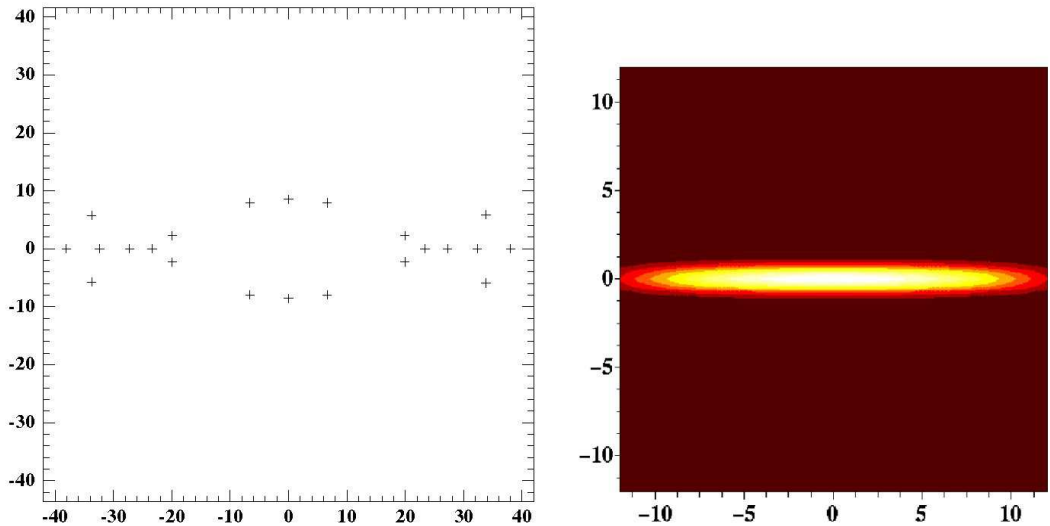}
\caption{\small Plot of the zeroes of the
 minimizer (left) and the density (right) for $\ep^2=0.002$, $\nu= 0.73$.  No vortex
 in the visible region.}\label{figannv}\end{figure}
 \eject
 \subsection{Main results}
\subsubsection{Weakly anisotropic case}
In a first step\footnote{\label{footk}We shall see that $\kappa\approx \nu+\ep$
in the sense that
the ratio ${\kappa}/{(\nu+\ep)}$ is bounded above and below by some fixed positive constants, so that the weakly anisotropic case is indeed $\nu\ll \ep^{1/3}$.}, we assume that,
with $\kappa$ given by
\eqref{1.morek},  \begin{equation}
  \label{eq:faible}
 {\ep \leq \kappa \ll \ep^{1/3}}.
\end{equation}  The isotropic case is recovered by assuming $\kappa = \ep.$
 This case  is similar  to  the isotropic  case and  we
derive similar results to  the paper \cite{MR2271933}, namely an
upper
 bound given by the Theta function but we lack a good lower bound.
\par
  We recall that the Jacobi Theta function
$\Theta(z,\tau)$
associated to a lattice $\Z\oplus \Z\tau$ is a holomorphic
function which vanishes exactly once in any lattice cell and is
defined by
 \begin{equation}\label{thetaintro}\Theta(z,\tau) =
\frac{1}{i}\sum_{n=-\infty}^{+\infty}(-1)^{n}e^{i\pi\tau(n+1/2)^{2}}
e^{(2n+1)\pi iz},\quad z\in \C\,.\end{equation}  This function
allows us to construct a periodic function on the same lattice:
$u_\tau$ is defined by\begin{equation}\label{eq:utau}
u_\tau(x_1,x_2) = e^{\frac{\pi}2 \left(z^2 - |z|^2\right)}
\Theta\left
  (\sqrt{\tau_I}z, \tau\right), \quad z = x_1 + ix_2, \quad \tau =
\tau_R + i\tau_I,
\end{equation}
$|u_\tau|$ is periodic over the lattice $\Z\oplus \tau\Z$, and
$u_\tau$ satisfies
\begin{equation}
      \label{eq:AbrikEulLag}
    \Pi_{0} \left(|u_{\tau}|^2 u_{\tau} \right)
    = \lambda_{\tau} u_{\tau},
\end{equation}
with
\begin{equation}
  \label{eq:Abrikmult}
  \lambda_{\tau} =
  \frac{\fint |u_{\tau}|^{4}}{\fint |u_{\tau}|^{2}} =
\frac{\gamma(\tau)}{\sqrt{2\tau_I}},
\end{equation}
and
\begin{equation}
  \label{eq:dfgamma}
  \gamma(\tau) := \frac{\fint |u_\tau|^4}{\left(\fint |u_\tau|^2\right)^2}.
\end{equation}
 The minimization of $\gamma(\tau)$ on all possible
$\tau$ corresponds to the Abrikosov problem. It turns out that the
properties of the Theta function allow to derive that  $$\gamma(\tau)
= \sum_{(j,k)\in \Z^2} e^{-\frac{\pi}{\tau_I} |j\tau - k|^2}$$ and
prove (see \cite{MR2271933}) that $\tau \mapsto \gamma(\tau)$ is
minimized for $\tau = j=e^{2i\pi/3}$, which corresponds to the
hexagonal lattice. The minimum is
\begin{equation}\label{gammadej}
b=\gamma (j)\approx 1.1596.
\end{equation}
The function $u_\tau$ allows us to construct the vortex lattice and
we multiply it by the proper inverted parabola to get a good upper
bound:
\begin{theorem}\label{th:cv-faible}
 We have
   for $I(\ep,\kappa)$ defined in \eqref{eq:minLLL},
   $b$ given in \eqref{gammadej}, $\kappa$ in \eqref{1.morek},
  \begin{equation}
    \label{eq:limnrjfaible}
    \frac23 \sqrt{\frac{2g_{0}\ep \kappa}{\pi}} < I(\ep,\kappa) \leq \frac23
    \sqrt{\frac{2g_{0}b\ep \kappa}{\pi}} + O\left(\sqrt{\ep\kappa}
        \left(\frac{\kappa^3}{\ep}\right) ^{1/8}\right),
  \end{equation}
  when $(\epsilon,\kappa\epsilon^{-1/3}) \to (0,0)$. Moreover, the following function provides the
upper bound:
\begin{equation}
  \label{eq:ansatz}
  v = \Pi_{0}\left( u_\tau \rho \right),
\end{equation}
where $u_\tau$ is defined by $(\ref{eq:utau})$ with $
 \tau =e^{\frac{2i\pi}3}$ and
$$\rho(x)^2 = \frac{2}{\pi\sqrt b R_1 R_2} \left(1-
  \frac{x_1^2}{\sqrt b R_1^2}
- \frac{x_2^2}{ \sqrt b R_2^2}\right)_+, \quad R_1 =
\left(\frac{4g_0\kappa}{\pi\ep^3}\right)^{1/4}, \ R_2 =
\left(\frac{4g_0\ep}{\pi\kappa^3}\right)^{1/4}.$$
\end{theorem}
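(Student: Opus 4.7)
The plan splits into a Thomas--Fermi-type lower bound and an upper bound via the explicit Abrikosov ansatz.

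For the lower bound $I(\ep,\kappa)>\tfrac23\sqrt{2g_{0}\ep\kappa/\pi}$, the natural starting point is to minimize the convex functional $\sigma\mapsto\tfrac12\int V\sigma+\tfrac{g_0}{2}\int\sigma^2$ over densities $\sigma\ge 0$ with $\int\sigma=1$, where $V=\ep^{2}x_1^2+\kappa^2 x_2^2$. A Lagrange-multiplier computation produces the inverted parabola $\sigma=(\mu-V/2)_+/g_0$ supported on the ellipse $\{V<2\mu\}$, with $\mu$ fixed by normalization, and gives a base Thomas--Fermi value of order $\sqrt{g_{0}\ep\kappa/\pi}$. The improvement to the stated constant uses the LLL constraint $u\in\Lambda_{0}$: from the reproducing kernel \eqref{eq:projLLL} one has $|u(x)|^{2}\le\|u\|_{L^{2}}^{2}$ pointwise, and a Carlen-type bound $\int|u|^{4}\le\tfrac12\|u\|_{L^{2}}^{4}$ on $\Lambda_{0}$ forces LLL densities to remain spread over at least one Landau cell, effectively doubling the coefficient of the quartic term in the constrained Thomas--Fermi problem.

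For the upper bound, I would substitute $v=\Pi_{0}(u_\tau\rho)$ with $\tau=j=e^{2i\pi/3}$ and the $\rho$ given, and estimate $E(v)$ piece by piece. The governing structural fact is the scale separation: $|u_\tau|^{2}$ is doubly periodic on a fundamental cell of area $1$, whereas $\rho$ is supported on an ellipse of semi-axes $b^{1/4}R_{1},\ b^{1/4}R_{2}$, both diverging in the regime \eqref{eq:faible}. Slow variation of $\rho$ then justifies the periodic-averaging approximation
\[
\int f(x)\,|u_\tau|^{2k}\rho^{2k}\,dx \;=\; \Bigl(\fint|u_\tau|^{2k}\Bigr)\int f\rho^{2k}\,dx\;+\;\text{(oscillation error)},\qquad k=1,2,
\]
for slowly varying $f$. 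Combining with the Abrikosov identity $\fint|u_\tau|^{4}=b(\fint|u_\tau|^{2})^{2}$ at $\tau=j$ (from \eqref{eq:Abrikmult}--\eqref{eq:dfgamma}) and normalizing $v$ to unit $L^{2}$ norm, the leading order of the energy reads
\[
E(v)\;\simeq\;\tfrac12\int V\rho^{2}+\tfrac{g_{0}b}{2}\int\rho^{4},
\]
and the $\rho$ in the theorem is precisely the Thomas--Fermi optimizer of this reduced functional subject to $\int\rho^{2}=1$, which yields the leading term $\tfrac23\sqrt{2g_{0}b\ep\kappa/\pi}$.

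The hard part is controlling the projection defect $\|v-u_\tau\rho\|_{L^{2}}$ and the oscillation-averaging remainders near the boundary of $\supp\rho$, where the truncated inverted parabola $\rho^{2}=(1-\cdots)_+$ is only $C^{0}$. A commutator bound $\|[\Pi_{0},M_\rho]u_\tau\|_{L^{2}}\lesssim\|\nabla\rho\|_{L^{2}}$ derived from the explicit Gaussian kernel in \eqref{eq:projLLL}, together with a Riemann--Lebesgue estimate for the periodic averaging and a careful trace estimate of $\nabla\rho$ up to the elliptic boundary, should produce the error term $O\bigl(\sqrt{\ep\kappa}(\kappa^{3}/\ep)^{1/8}\bigr)$ asserted in the theorem.
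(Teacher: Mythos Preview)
Your lower bound argument contains a genuine error. The unconstrained Thomas--Fermi minimization of $\tfrac12\int V|u|^2+\tfrac{g_0}{2}\int|u|^4$ over all $u$ with $\|u\|_{L^2}=1$ already yields \emph{exactly} $\tfrac23\sqrt{2g_0\ep\kappa/\pi}$; no LLL-based improvement is needed, and the paper uses none. The lower bound in \eqref{eq:limnrjfaible} is obtained simply by dropping the holomorphy constraint, computing the explicit inverted-parabola minimizer \eqref{eq:TF1}, and noting that this minimizer, being compactly supported, cannot lie in $\Lambda_0$ (which gives the strict inequality). Your proposed Carlen-type bound $\int|u|^4\le\tfrac12\|u\|_{L^2}^4$ is an \emph{upper} bound on the quartic term, so it cannot increase the energy from below; it goes the wrong direction for a lower bound and cannot ``double the coefficient.'' The paper explicitly remarks that this lower bound is not expected to be sharp (it lacks the factor $b$).

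Your upper bound strategy is the same as the paper's in spirit, but the technical tool you propose has a gap. The profile $\rho(x)=c\,(1-x_1^2/(\sqrt b R_1^2)-x_2^2/(\sqrt b R_2^2))_+^{1/2}$ is only $C^{0,1/2}$: its gradient behaves like $d^{-1/2}$ near the boundary ellipse (with $d$ the distance to the boundary), so $\|\nabla\rho\|_{L^2}=\infty$ and the commutator bound $\|[\Pi_0,M_\rho]u_\tau\|_{L^2}\lesssim\|\nabla\rho\|_{L^2}$ is useless as stated. The paper handles this by proving the projection-error estimates (Lemmas~\ref{lm:approx1} and~\ref{lm:approx2}) for general $p\in C^{0,\beta}$, $0\le\beta\le1$, via real interpolation between $C^0$ and $C^{0,1}$, and then applying them with $\beta=1/2$, which is precisely the H\"older regularity of the truncated square root. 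This interpolation is what produces the error exponent $R_2^{-1/2}=(\kappa^3/\ep)^{1/8}$ in \eqref{eq:limnrjfaible}. The periodic-averaging step you describe is carried out in the paper by the same device (see \eqref{eq:bsup8}--\eqref{eq:bsup7}), again exploiting only $C^{0,1/2}$ regularity of $p$.
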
We expect $v$ to be a good approximation
 of the minimizer and the energy asymptotics to match the right-hand side of
  \eqref{eq:limnrjfaible}. Thus, the lower bound is not optimal
  ( it
 does not include $b$).
  In addition, the test function \eqref{eq:ansatz} (with a general $\tau
\neq j$ a priori) gives the upper bound of
\eqref{eq:limnrjfaible} with $\gamma(\tau)$ instead of $b$.
The proof is a refinement of that in \cite{MR2271933}.
\subsubsection{Strong anisotropy}
In the case where the rotation is fast enough
 in the sense that
 \begin{equation}
  \label{eq:fort}
  {\kappa \gg \ep^{1/3}}
\end{equation} we have found a regime  unknown by physicists where
 vortices disappear and the problem can be reduced in fact to a 1D energy.
\begin{theorem}
  \label{th:cv-fort}
  For $I(\ep,\kappa)$ defined in \eqref{eq:minLLL},
   $b$ given in \eqref{gammadej}, $\kappa$ in \eqref{1.morek},
   we have
\begin{equation}
  \label{eq:nrjlimfort}
\lim_{ (\epsilon,\epsilon^{1/3}\kappa^{-1}) \to (0,0)}\left( \frac{I(\ep,\kappa) - \frac{\kappa^2}{8\pi}}{\ep^{2/3}} \right) =
  J,
\end{equation}
where
\begin{equation}
  \label{eq:pb1d}
  J = \inf\bigl\{\int\frac12 t^2 p(t)^2 + \frac{g_0}{2}\int
    p(t)^4, \ p \text{  real-valued }\in L^2(\R)\cap L^4(\R),
    \norm{p}_{L^2(\R)}=1
    \bigr\}.
\end{equation}
In addition, if $u$ is a minimizer of $I(\ep,\kappa)$, then
\begin{equation}
  \label{eq:ulimfort}
  \frac 1 {\ep^{1/3}} \left| u\left(\frac{x_1}{\ep^{2/3}},x_2\right) \right| \longrightarrow 2^{1/4} e^{-\pi x_2^2} p(x_1) ,
\end{equation}
in $L^2(\R^2) \cap L^4(\R^2)$, where $p$ is the minimizer of $J.$
\end{theorem}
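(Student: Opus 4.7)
The plan has three ingredients: a Fock-space identity that isolates the leading constant $\kappa^{2}/(8\pi)$, an explicit trial function built from the minimizer of $J$, and a matching lower bound that also identifies the transverse profile.

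For $u=f(z)e^{-\pi|z|^{2}/2}\in\Lambda_{0}$ with $\|u\|_{L^{2}}=1$, begin by writing $x_{2}=(z-\bar z)/(2i)$ and use that, in the Fock realization of $\Lambda_{0}$, the operators $z$ and $\bar z$ correspond to rescaled creation/annihilation operators $a^{*}/\sqrt{\pi}$ and $a/\sqrt{\pi}$ with $[a,a^{*}]=1$. An integration by parts in the Fock inner product yields
\[
  \int_{\R^{2}}x_{2}^{2}\,|u|^{2}\,dx \;=\; \frac{1}{4\pi}\|f\|^{2}+\frac{1}{4\pi}\|(a^{*}-a)f\|^{2} \;\geq\;\frac{1}{4\pi},
\]
together with the analogous identity $\int x_{1}^{2}|u|^{2}=\tfrac{1}{4\pi}\|f\|^{2}+\tfrac{1}{4\pi}\|(a^{*}+a)f\|^{2}$. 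Dropping the $x_{1}^{2}$ and $|u|^{4}$ contributions already gives $I(\ep,\kappa)\geq\kappa^{2}/(8\pi)$, and the remainder of the proof computes the $\ep^{2/3}$-correction.

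For the upper bound, let $p$ be a minimizer of $J$ and set $p_{\ep}(t):=\ep^{1/3}p(\ep^{2/3}t)$ (so $\|p_{\ep}\|_{L^{2}}=1$). The ansatz
\[
  w_{\ep}(x_{1},x_{2}):=2^{1/4}\,p_{\ep}(x_{1})\,e^{-\pi x_{2}^{2}+i\pi x_{1}x_{2}}
\]
is tuned so that $|w_{\ep}|^{2}=\sqrt{2}\,p_{\ep}(x_{1})^{2}e^{-2\pi x_{2}^{2}}$; a direct computation gives $\|w_{\ep}\|_{L^{2}}=1$, saturation $\int x_{2}^{2}|w_{\ep}|^{2}=1/(4\pi)$, $\int x_{1}^{2}|w_{\ep}|^{2}=\ep^{-4/3}\int t^{2}p^{2}$, and $\int|w_{\ep}|^{4}=\ep^{2/3}\int p^{4}$, i.e.\ $E(w_{\ep})=\kappa^{2}/(8\pi)+\ep^{2/3}J$ exactly. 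Although $w_{\ep}\notin\Lambda_{0}$ because $p_{\ep}(x_{1})$ is not holomorphic in $z$, it oscillates and varies on scale $\ep^{-2/3}$, much larger than the $O(1)$ scale of the kernel in \eqref{eq:projLLL}. Taylor expanding $p_{\ep}$ when evaluating $\Pi_{0}w_{\ep}$ from \eqref{eq:projLLL} gives $\Pi_{0}w_{\ep}=w_{\ep}+r_{\ep}$ with $\|r_{\ep}\|_{L^{2}}$, $\|x_{1}r_{\ep}\|_{L^{2}}$ and $\|r_{\ep}\|_{L^{4}}$ all of order $\ep^{2/3}$ or smaller (controlled by weighted Sobolev norms of $p$). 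Hence $v_{\ep}:=\Pi_{0}w_{\ep}/\|\Pi_{0}w_{\ep}\|_{L^{2}}\in\Lambda_{0}$ satisfies $E(v_{\ep})=\kappa^{2}/(8\pi)+\ep^{2/3}J+o(\ep^{2/3})$.

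For the matching lower bound, let $u_{\ep}$ be a sequence of quasi-minimizers; by the upper bound just proved, $E(u_{\ep})-\kappa^{2}/(8\pi)=O(\ep^{2/3})$. The first identity then gives
\[
  \|(a^{*}-a)f_{\ep}\|^{2}\;\leq\;\frac{4\pi}{\kappa^{2}}\Bigl(E(u_{\ep})-\tfrac{\kappa^{2}}{8\pi}\Bigr)\;=\;O\bigl(\ep^{2/3}\kappa^{-2}\bigr)\;\longrightarrow 0
\]
in the strong-anisotropy regime $\kappa\gg\ep^{1/3}$, which pins $u_{\ep}$ onto the formal one-parameter family $2^{1/4}q(x_{1})e^{-\pi x_{2}^{2}+i\pi x_{1}x_{2}}$. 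Writing $u_{\ep}=2^{1/4}q_{\ep}(x_{1})e^{-\pi x_{2}^{2}+i\pi x_{1}x_{2}}+s_{\ep}$ with $s_{\ep}$ $L^{2}$-orthogonal to the first term, the $L^{2}$-control on $s_{\ep}$ comes from the displayed inequality and the $L^{4}$-control from $\tfrac{g_{0}}{2}\int|u_{\ep}|^{4}\leq E(u_{\ep})$. Substituting the decomposition in $E(u_{\ep})$ and rescaling $q_{\ep}(x_{1})=\ep^{1/3}q(\ep^{2/3}x_{1})$ reduces the remaining energy to $\ep^{2/3}\bigl(\tfrac{1}{2}\int t^{2}q^{2}+\tfrac{g_{0}}{2}\int q^{4}\bigr)+o(\ep^{2/3})\geq\ep^{2/3}J$; uniqueness (up to sign) of the positive minimizer of $J$ then yields the strong convergence \eqref{eq:ulimfort}. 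The principal obstacle will be making this decoupling quantitative with enough $L^{4}$-control on $s_{\ep}$ to pass to the limit in the nonlinearity; this is where the strict hypothesis $\kappa\gg\ep^{1/3}$ enters, forcing the transverse mode to collapse to the squeezed Gaussian $2^{1/4}e^{-\pi x_{2}^{2}}$ at precisely the $\ep^{2/3}$ scale.
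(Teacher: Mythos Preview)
Your upper-bound argument is essentially the paper's: the ansatz $w_\ep$ is exactly the function to which the paper's explicit test function \eqref{eq:ftest-fort} is shown to be close (Lemma~\ref{lm:ftest-fort-2}), and projecting via $\Pi_0$ versus writing the integral formula directly are equivalent constructions. That part is fine.

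The lower-bound sketch has a real gap. Your Fock identity is correct and is precisely the content of Lemma~\ref{lm:b_inf_y2}: under the Bargmann isomorphism $W:L^{2}(\R_{t})\to\Lambda_{0}$ one has $\int x_{2}^{2}|u|^{2}=\|t\,W^{-1}u\|_{L^{2}}^{2}+\tfrac{1}{4\pi}$, so indeed $\|(a^{*}-a)f_{\ep}\|^{2}\to 0$, i.e.\ the preimage $v_{\ep}=W^{-1}u_{\ep}$ concentrates at $t=0$. But $a^{*}-a$ has purely continuous spectrum, so this gives no orthogonal decomposition: the family $\{2^{1/4}q(x_{1})e^{-\pi x_{2}^{2}+i\pi x_{1}x_{2}}:q\in L^{2}(\R)\}$ is \emph{not} a subspace of $\Lambda_{0}$ (the factor $q(x_{1})$ is not holomorphic in $z$), so there is no meaningful ``$s_{\ep}$ orthogonal to the first term'' inside $\Lambda_{0}$. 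Moreover, $\|t v_{\ep}\|_{L^{2}}\to 0$ together with $\|v_{\ep}\|_{L^{2}}=1$ forces $v_{\ep}$ to delocalize in frequency, and you have no a~priori bound preventing this (only $\ep^{2}\int x_{1}^{2}|u_{\ep}|^{2}=O(\ep^{2/3})$, i.e.\ $\|D_{t}v_{\ep}\|^{2}=O(\ep^{-4/3})$). So neither the $L^{2}$ nor the $L^{4}$ smallness of $s_{\ep}$ follows from what you have written, and the reduction to the one-dimensional energy is not justified.

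The paper's route around this is to abandon the holomorphic structure and work with $|u|$. Carlen's identity (Lemma~\ref{lm:carlen}) $\int|\nabla|u||^{2}=\pi\int|u|^{2}$ manufactures a gradient term out of nothing, yielding the decomposition of Lemma~\ref{lm:binf-fort}: the $\kappa^{2}$-part becomes a genuine harmonic oscillator $\tfrac{\kappa^{2}}{2}\bigl(\tfrac{1}{4\pi^{2}}\int|\partial_{2}|u||^{2}+\int x_{2}^{2}|u|^{2}\bigr)$ acting on the \emph{modulus}. After rescaling, this harmonic oscillator has a discrete ground state $2^{1/4}e^{-\pi x_{2}^{2}}$, and the energy excess bounds both $\int|\partial_{2}|u||^{2}$ and $\int x_{2}^{2}|u|^{2}$ separately. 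That, together with the $\int x_{1}^{2}v^{2}$ bound, gives genuine $L^{2}$-compactness of the rescaled $|u|$ and forces the transverse profile; the $L^{4}$ convergence then follows from strict convexity of the one-dimensional functional. The missing ingredient in your sketch is precisely this gradient control on $|u|$ --- without Carlen (or an equivalent) you cannot upgrade ``$\int x_{2}^{2}|u|^{2}$ is nearly $1/(4\pi)$'' to ``$|u(x_{1},\cdot)|$ is nearly Gaussian''.
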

Note that the minimizer $p$ of \eqref{eq:pb1d} is explicit:
$$p(t)^2 = \frac{3}{4R} \left(1-\frac{t^2}{R^2} \right)_+ , \quad R =
\left( \frac{3g_{0}}{2}\right)^{1/3}.$$
A few words about the proof of Theorem~\ref{th:cv-fort}.
The
first point is that the operator $\Pi_{0}x_2^2\Pi_{0}$ (see \eqref{lll}, \eqref{eq:projLLL})
is bounded
from below by a positive constant:
$$\forall u \in \Lambda_0, \quad \int_{\R^2} x_2^2 |u|^2 \geq \frac1{4\pi}
\int_{\R^2} |u|^2.$$ This is proven in Lemma~\ref{lm:b_inf_y2} below.
Actually, the spectrum of this operator is purely continuous, and
any Weyl sequence associated with the value $1/(4\pi)$ converges
(up to renormalization) to the function
\begin{equation}\label{eq:lim}
u_0(x_1,x_2) = \exp\left(-\pi x_2^2 + i\pi x_1x_2\right),
\end{equation}
which satisfies the equation $\Pi_{0}(u_0) = \frac 1 {4\pi} u_0.$
This gives the lower bound
$$I(\ep, \kappa) \geq \frac{\kappa^2}{8 \pi},$$
and indicates that in order to be close to this lower bound, a test
function should be close to \eqref{eq:lim}. Thus, the second point
is to construct a test function having the same behaviour as
\eqref{eq:lim} in $x_2$, and  a large extension in $x_1$. This is
done by using the function
$$u_1(x_1,x_2) = \frac 1 {2^{1/4}} e^{-\frac{\pi}2 x_2^2}
\int_{\R}e^{-\frac{\pi}{2} \left((x_1-y_1)^2 - 2iy_1x_2 \right)}
\rho(y_1) dy_1,$$ which is equal to $\Pi_{0}(\rho(x_1)
\delta_0(x_2))$, where $\delta_0$ is the Dirac delta function and
$\rho$ any real-valued function of one variable. This test function is then
proved to be close to $2^{1/4}e^{-\pi x_2^2} \rho(x_1),$ which
allows to compute its energy, and gives the upper bound, provided
that $\rho(t) = \ep^{1/3}p(\ep^{2/3}t)$, where $p$ is the minimizer
of \eqref{eq:pb1d}. Finally, in order to prove the lower bound, we
first extract bounds on the minimizer from the energy, which allow
to pass to the limit in the equation (after rescaling as in
\eqref{eq:ulimfort}), hence prove that the limit is the right-hand
side of \eqref{eq:ulimfort}. This uses the fact that the energy
appearing in \eqref{eq:pb1d} is strictly convex, hence that any
critical point is the unique minimizer.
\par
The paper is organized as follows: in section 2, we  review some
standard facts on positive definite quadratic forms in a symplectic
space. This allows us, in section 3, to construct a symplectic
mapping $\chi$, which yields a simplification of the quadratic form
$q$.  In section 4,
quantizing that symplectic mapping
in a metaplectic transformation,
we find the expression of the $LLL$ and manage
to reach the reduced form of the energy (Proposition 4.5).
 Section 5 is devoted to the proof of Theorem 1.1 and section 6 to
 Theorem 1.2.
\subsubsection*{Open questions}We have no information on the
the intermediate regime where, for instance, $\ep^{1/3}/ \kappa$ converges to some
constant $R_{0}^{4/3}$
(in that case, $R_{1}\approx \ep^{-2/3}, R_{2}\approx R_{0}$).
 We expect that the extension in the $x_2$ direction depends on $R_{0}$ and wonder whether
  the condensate has a finite number of vortex lines. We have not
  determined the limiting problem.
 \vs
{\bf Acknowledgements.}
We would like to thank A.L.Fetter and J.Dalibard for
very useful comments on the physics of the problem. We also acknowledge
support from the French ministry grant {\it ANR-BLAN-0238, VoLQuan} and
express our gratitude to our colleagues  participating to this
ANR-project, in particular T.Jolic\oe ur and S.Ouvry.
\section{Quadratic Hamiltonians}
We first review some standard facts on positive definite
quadratic forms in a symplectic space.
\subsection{On positive definite quadratic forms on symplectic spaces}
We consider the phase space
$\R^n_{x}\times \R^n_{\xi}$, equipped with its canonical symplectic structure: the symplectic form $\sigma$ is a bilinear alternate form on $\RZ$  given by
\begin{gather}\label{2.symfor}
\sigma\bigl((x,\xi);(y,\eta)\bigr)=\xi\cdot y-\eta\cdot x=\poscal{\sigma X}{Y},\\
\text{with\quad}
{X=\begin{pmatrix}x\\\xi\end{pmatrix}, Y=\begin{pmatrix}y\\\eta\end{pmatrix},\sigma=
\begin{pmatrix}
0&I_{n}\\
-I_{n}&0
\end{pmatrix},}
\end{gather}
where the form $\sigma$ is identified with
the $2n\times 2n$ matrix above
given in $n\times n$ blocks.
The symplectic group $Sp(n)$ (a subgroup of $Sl(2n,\R)$), is defined by the equation
on the $2n\times 2n$ matrix $\chi$,
\begin{equation}
\chi^* \sigma\chi =\sigma,\quad\text{
i.e. $\forall X,Y\in \RZ,\ \poscal{\sigma \chi X}{\chi Y}
=\poscal{\sigma X}{Y}$}.
\end{equation}
The following lemma is classical
(see e.g. the chapter XXI in \cite{MR2304165}, or \cite{MR1698616}).
\begin{lem} \label{2.lem.symgen}
Let $B\in GL(n,\R)$ and let $A, C$ be  $n\times n$ real symmetric matrices.  \
Then the matrix $\Xi$, given by $n\times n$
blocks
\begin{equation}\label{2.symgen}
\Xi_{A,B,C}=\mat22{B^{-1}}{-B^{-1}C}{AB^{-1}}{B^*-AB^{-1}C}
=
\mat22{I}{0}{A}{I}
\mat22{B^{-1}}{0}{0}{B^*}
\mat22{I}{-C}{0}{I}
\end{equation}
belongs to $Sp(n)$.
Any element of $Sp(n)$
can be written
as a product $$\Xi_{A_{1},B_{1},C_{1}}\Xi_{A_{2},B_{2},C_{2}}.$$
\end{lem}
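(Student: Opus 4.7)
The first half of the lemma is straightforward: the factorization displayed in \eqref{2.symgen} already exhibits $\Xi_{A,B,C}$ as a product of three matrices, and I would verify directly from the defining identity $\chi^{*}\sigma\chi=\sigma$ that each factor lies in $Sp(n)$. The shears $\begin{pmatrix} I & 0 \\ A & I \end{pmatrix}$ and $\begin{pmatrix} I & -C \\ 0 & I \end{pmatrix}$ are symplectic precisely because $A$ and $C$ are symmetric (a one-line block computation), while the block-diagonal factor $\begin{pmatrix} B^{-1} & 0 \\ 0 & B^{*} \end{pmatrix}$ is symplectic for any $B\in GL(n,\R)$. Closure of $Sp(n)$ under multiplication then gives $\Xi_{A,B,C}\in Sp(n)$.

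For the converse, the structural observation I would emphasize is that matrices of the form $\Xi_{A,B,C}$ are exactly the symplectic matrices whose upper-left $n\times n$ block $P$ is invertible. Indeed, for $\chi=\begin{pmatrix} P & Q \\ R & S \end{pmatrix}\in Sp(n)$ with $P$ invertible, setting $B=P^{-1}$, $C=-P^{-1}Q$, $A=RP^{-1}$ recovers $\chi=\Xi_{A,B,C}$; the required symmetry of $A$ and $C$ falls out of the symplectic identities $R^{*}P=P^{*}R$ and $PQ^{*}=QP^{*}$, and the lower-right block $B^{*}-AB^{-1}C$ matches $S$ thanks to $P^{*}S-R^{*}Q=I$. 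The lemma therefore reduces to the following: for every $\chi\in Sp(n)$, exhibit a single matrix $\Xi_1$ of the form $\Xi_{A,B,C}$ such that $\Xi_1^{-1}\chi$ also has an invertible upper-left block; then both $\Xi_1$ and $\Xi_1^{-1}\chi$ are of the required type.

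I would try the simple candidate $\Xi_1=\begin{pmatrix} I & C \\ 0 & I \end{pmatrix}$ with $C$ symmetric (which corresponds to $\Xi_{0,I,-C}$), for which the upper-left block of $\Xi_1^{-1}\chi$ is $P-CR$. The whole argument then hinges on the linear-algebra claim that some symmetric $C$ makes $P-CR$ invertible. The only input available is that $\chi$ is invertible, hence the first $n$ columns $\begin{pmatrix} P \\ R \end{pmatrix}$ are linearly independent, equivalently $\ker P\cap\ker R=\{0\}$. The idea is to view $\det(P-CR)$ as a polynomial in the free entries of symmetric $C$ and to show it is not identically zero: when $R$ is invertible this is immediate by taking $C=tI$ and letting $t\to\infty$ (the leading term is $(-t)^n\det R$), and in general one reduces to this situation via the decomposition $\R^{n}=\ker R\oplus V$, exploiting the fact that $P$ restricts to an injection on $\ker R$ to fill the directions in which $R$ degenerates. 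This non-vanishing is the genuinely delicate point of the proof; once a valid $C$ is produced, $\chi=\Xi_{0,I,-C}\cdot(\Xi_{0,I,-C}^{-1}\chi)$ is the desired factorization into two matrices of the form \eqref{2.symgen}.
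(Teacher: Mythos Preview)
Your treatment of the first assertion is correct and coincides with what the paper does: the remark following the lemma simply says that the first statement ``is easy to verify directly,'' and your check that each of the three factors satisfies $\chi^*\sigma\chi=\sigma$ is exactly that verification.

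For the second assertion the comparison is asymmetric: the paper does \emph{not} prove it. The accompanying note states explicitly that ``we shall not use the last statement,'' and only records the characterization that $\Xi$ is of the form $\Xi_{A,B,C}$ if and only if the map $x\mapsto \mathrm{pr}_1\,\Xi(x\oplus 0)$ is invertible. You rediscover precisely this characterization and then go further, reducing the factorization to the linear-algebra claim that for some symmetric $C$ the block $P-CR$ is invertible. That reduction is sound; what remains genuinely incomplete in your sketch is the proof of the claim itself. The scalar choice $C=tI$ does not always work (for instance $P=\bigl(\begin{smallmatrix}1&0\\0&0\end{smallmatrix}\bigr)$, $R=\bigl(\begin{smallmatrix}0&1\\0&0\end{smallmatrix}\bigr)$ satisfy $\ker P\cap\ker R=\{0\}$ while $\det(P-tR)\equiv 0$), and the ``decomposition $\R^n=\ker R\oplus V$'' argument you allude to needs more care than indicated before it yields a symmetric $C$.

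A clean way to close the gap, specific to the symplectic setting, is to observe that the range of $\bigl(\begin{smallmatrix}P\\R\end{smallmatrix}\bigr)$ is the Lagrangian plane $\chi(\R^n\times\{0\})$, and that the graphs $\{(C\xi,\xi)\}$ with $C$ symmetric are exactly the Lagrangians transverse to $\R^n\times\{0\}$. The existence of the required $C$ then follows from the classical fact that any two Lagrangian subspaces admit a common Lagrangian transversal (the set of Lagrangians transverse to a fixed one is open and dense in the Lagrangian Grassmannian). This is essentially what the paper defers to H\"ormander's Chapter~XXI for; your approach is a concrete unpacking of that reference rather than a different route.
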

\begin{nb} {\rm\small The first statement is easy to verify directly
and we shall not use the last statement, which is nevertheless an interesting piece of information.
For a symplectic mapping $\Xi$, to be of the form above
is equivalent
to the assumption that the mapping
$x\mapsto pr_{1}\Xi(x\oplus 0)$
is invertible from $\R^n$ to $\R^n$.}\end{nb}
Given a quadratic form $Q$ on $\RZ$, identified with a symmetric
$2n\times 2n$ matrix,
we define its {\it fundamental matrix} $F$ by the identity
$$
F=-\sigma^{-1} Q=\sigma Q,\quad\text{so that for $X,Y\in \RZ$ }\quad
\poscal{\sigma Y}{FX}=\poscal{QY}{X}.
$$
The following proposition is classical
(see e.g.  the theorem
21.5.3 in \cite{MR2304165}).
\begin{pro}\label{2.pro.basdef}
Let $Q$ be  a positive definite quadratic form on the symplectic $\R^n_{x}\times\R^n_{\xi}$.
One can find $\chi\in Sp(n)$ such that with
$$\RZ\ni X=\chi Y, \quad
Y=(y_{1},\dots,y_{n},\eta_{1},\dots,\eta_{n}),$$
$$
\poscal{QX}{X}=\poscal{Q\chi Y}{\chi Y}
=\sum_{1\le j\le n}(\eta_{j}^2+\mu_{j}^2y_{j}^2),\quad \mu_{j}>0.
$$
The $\{\pm i\mu_{j}\}_{1\le j\le n}$ are the
$2n$
eigenvalues of the fundamental matrix,
related to the $2n$
eigenvectors
$\{e_{j}\pm i \ep_{j}\}_{1\le j\le n}.
$
The $\{e_{j}, \ep_{j}\}_{1\le j\le n}$
make a symplectic basis of  $\RZ$:
$$
\sigma(\ep_{j}, e_{k})=\delta_{j,k},\quad \sigma(\ep_{j}, \ep_{k})=
\sigma(e_{j}, e_{k})=0,
$$
and the symplectic planes $\Pi_{j}=\R e_{j}\oplus \R \ep_{j}$
are orthogonal for $Q$.
\end{pro}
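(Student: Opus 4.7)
\textbf{Proof plan for Proposition~\ref{2.pro.basdef}.} The strategy is to diagonalize the fundamental matrix $F=\sigma Q$ on the complexified space $\C^{2n}$, where it becomes anti-self-adjoint for a positive Hermitian form built from $Q$, and to then descend to a real symplectic basis of $\RZ$ in which $Q$ takes the announced normal form.

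I start by recording algebraic facts about $F$. Since $Q$ is symmetric, $\sigma$ antisymmetric, and $\sigma^{2}=-I$, the matrix $\sigma F=-Q$ is symmetric while $QF=Q\sigma Q$ is antisymmetric. The first identity rewrites as $F^{*}\sigma+\sigma F=0$, so $F$ lies in the Lie algebra $\mathfrak{sp}(n)$; the second shows that $F$ is skew-symmetric for the Euclidean inner product $\poscal{Q\cdot}{\cdot}$ on $\RZ$. In particular $F$ is invertible, because $Q$ is positive definite. I then extend $Q$ and $\sigma$ bilinearly to $\C^{2n}$ and introduce the positive definite Hermitian form $h(u,v)=\poscal{Qu}{\bar v}$, whose positivity follows from $h(u,u)=\poscal{Q\re u}{\re u}+\poscal{Q\im u}{\im u}$. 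A short computation using $Q=Q^{*}$ shows that $-iF$ is $h$-self-adjoint; by the spectral theorem $-iF$ is $h$-diagonalizable with real spectrum, invertibility of $F$ rules out zero, and the reality of $F$ makes the spectrum conjugation-invariant, so the $2n$ eigenvalues of $F$ come in $n$ opposite pairs $\pm i\mu_{j}$ with $\mu_{j}>0$, counted with multiplicity, which is the eigenvalue claim.

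I then pick an $h$-orthonormal family $v_{j}$ of eigenvectors of $F$ for the $+i\mu_{j}$ (using Gram--Schmidt inside each eigenspace when some $\mu_{j}$ coincide) and write $v_{j}=\tilde e_{j}+i\tilde\ep_{j}$ with $\tilde e_{j},\tilde\ep_{j}\in\RZ$. To convert the complex orthogonal picture into the real symplectic basis, I use the key identity $\sigma=-QF^{-1}$, which together with $Fv_{j}=i\mu_{j}v_{j}$ gives $\sigma(v_{j},w)=(i/\mu_{j})h(v_{j},\bar w)$ for every $w\in\C^{2n}$. Plugging $w=v_{k}$ and $w=\bar v_{k}$ and splitting into real and imaginary parts yields $\sigma(\tilde e_{j},\tilde e_{k})=\sigma(\tilde\ep_{j},\tilde\ep_{k})=0$ and $\sigma(\tilde\ep_{j},\tilde e_{k})$ proportional to $\delta_{jk}$; the same inputs, rewritten for $\poscal{Q\cdot}{\cdot}$ in place of $h$, deliver the $Q$-orthogonality of the planes $\Pi_{j}=\R\tilde e_{j}\oplus\R\tilde\ep_{j}$ for distinct $j$. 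Since a positive definite quadratic form on a symplectic plane admits an essentially unique symplectic diagonalization, I can then rescale $\tilde e_{j}$ and $\tilde\ep_{j}$ independently within each $\Pi_{j}$ so as to simultaneously achieve $\sigma(\ep_{j},e_{j})=1$ together with $\poscal{Qe_{j}}{e_{j}}=\mu_{j}^{2}$ and $\poscal{Q\ep_{j}}{\ep_{j}}=1$. This produces the normal form $\sum_{j}(\eta_{j}^{2}+\mu_{j}^{2}y_{j}^{2})$, and the matrix $\chi$ with columns $(e_{1},\dots,e_{n},\ep_{1},\dots,\ep_{n})$ lies in $Sp(n)$ by the symplectic relations.

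The main subtlety is this last rescaling step: the $h$-orthonormalization of the $v_{j}$, the symplectic pairing $\sigma(\ep_{j},e_{k})=\delta_{jk}$, and the specific diagonal coefficients $(\mu_{j}^{2},1)$ of $Q$ impose three distinct normalization constraints that no single uniform rescaling of the complex eigenvectors can satisfy at once; one must combine the complex $h$-normalization with an additional real rescaling within each plane $\Pi_{j}$ to obtain all three simultaneously, which also clarifies how the eigenvectors of $F$ relate, up to a plane-dependent complex factor, to the symplectic basis constructed in the statement.
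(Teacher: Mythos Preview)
Your proof is correct and follows the same strategy as the paper's one-line hint: complexify, equip $\C^{2n}$ with the positive Hermitian form induced by $Q$, and diagonalize. The paper phrases this as diagonalizing the sesquilinear Hermitian form $i\sigma$ with respect to that $Q$-inner product, which is equivalent to your diagonalization of the $h$-self-adjoint operator $-iF=-i\sigma Q$; your write-up simply supplies the details (the identity $\sigma=-QF^{-1}$, the real/imaginary splitting, and the per-plane rescaling) that the paper leaves implicit.
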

\begin{nb}{\rm \small
A one-line-proof of these classical facts:
on $\C^{2n}$ equipped with the dot-product  given by $Q$, diagonalize the sesquilinear Hermitian form $i\sigma$.}
\end{nb}
\subsection{Generating functions}
We define on $\R^n\times \R^n$ the {\it generating function} $S$
of the symplectic mapping of the form $\Xi_{A,B,C}$
given in the lemma \ref{2.lem.symgen}
by the identity
\begin{equation}\label{4.genfun}
S(x,\eta)=\frac12\bigl(\poscal{Ax}{x}+2\poscal{Bx}{\eta}
+\poscal{C\eta}{\eta}
\bigr).
\end{equation}
We have
\begin{equation}\label{2.genfin}
\Xi_{A,B,C}\underbrace{\Bigl(\frac{\p S}{\p\eta}, \eta\Bigr)}_{\in \R^n\times \R^n}=
\underbrace{\Bigl(x,\frac{\p S}{\p x}\Bigr)}_{\in \R^n\times \R^n}.
\end{equation}
In fact, we see directly
$$
\mat22{I}{0}{A}{I}
\mat22{B^{-1}}{0}{0}{B^*}
\mat22{I}{-C}{0}{I}
\begin{pmatrix}
Bx+C\eta\\\eta
\end{pmatrix}
=\mat22{I}{0}{A}{I}
\begin{pmatrix}
x\\B^*\eta
\end{pmatrix}
=
\begin{pmatrix}
x\\Ax+B^*\eta
\end{pmatrix}.
$$
Given a positive definite quadratic form
$Q$ on $\RZ$, identified with a symmetric $2n\times 2n$
matrix,
we know from the proposition
\ref{2.pro.basdef}
that there exists $\chi \in Sp(n)$ such that
$$
\chi^* Q \chi=
\begin{pmatrix}\mu^2&0\\0&I_{n}
\end{pmatrix},\quad \mu^{2}=\diag(\mu_{1}^2,\dots,\mu_{n}^2)
$$
Looking for $\chi=\Xi_{A,B,C}$ given by a generating function $S$ as above,
we end-up
(using the notation $q(X)=\poscal{QX}{X}$ with $X\in \RZ$)
 with the equation
$$
q(\underbrace {x,\p_{x}S}_{\R^n\times\R^n})=
\norm{ \mu\p_{\eta} S}^2+\norm{\eta}^2,\qquad
 \mu\p_{\eta}  S=(\mu_{j}\p_{\eta_{j}}S)_{1\le j\le n}\in \R^n,
$$
where $\norm{\cdot}$ stands for the standard Euclidean norm on $\R^n$.
This means
\begin{equation}\label{2.initia}
q(x,  Ax+B^* \eta)=\norm{ \mu(Bx+C\eta)}^2+\norm{\eta}^2.
\end{equation}
We want now to go back to the study of our quadratic form \eqref{1.fetter}.
\subsection{Effective diagonalization}\label{sec.effective}
\begin{lem}\label{2.lem.effect}
Let $q$ be the quadratic form on $\R^4$
given by
\eqref{1.fetter}, where
$\omega, \nu, \ep$ are nonnegative  parameters such that
$\omega^2+\nu^2+\ep^2=1$.
The eigenvalues of the fundamental matrix are $\pm i\mu_{1},\pm i\mu_{2}$ with
\begin{align}\label{2.parone}
0&\le \mu_{1}^2=1+\omega^2-\alpha\le \mu_{2}^2=1+\omega^2+\alpha,\quad \alpha=\sqrt{\nu^4+4\omega^2},
\\\label{2.partwo}
\mu_{1}^2&= \frac{2\nu^2\ep^2+\ep^4}{\mu_{2}^2}.
\end{align}
In the isotropic case $\nu=0$, we recover
$\mu_{1}=1-\omega, \mu_{2}=1+\omega.$
When $\ep>0$, we have
$0<\mu_{1}^2\le \mu_{2}^2$
and $q$ is positive-definite.
When $\ep=0$,
we have $\mu_{1}=0<\mu_{2}$,
and
$q$ is positive semi-definite
with rank  $2$ if $\nu=0$ and with rank $3$ if $\nu>0$.
\end{lem}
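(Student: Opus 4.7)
The strategy is to compute the characteristic polynomial of $F=\sigma Q$ by hand, then read off the positivity and rank statements from a completion of squares. Writing $q(X)=\poscal{QX}{X}$ with the symmetric matrix $Q$ read off from \eqref{1.fetter}, one has $F^T=Q^T\sigma^T=-Q\sigma=-\sigma^{-1}F\sigma$ (using $\sigma^2=-I$), so $F$ and $F^T$, and hence $F$ and $-F$, share their characteristic polynomial. Consequently $\det(\lambda I-F)$ is an even polynomial of degree $4$,
$$\det(\lambda I-F)=\lambda^4+a\lambda^2+b,\qquad a=\mu_1^2+\mu_2^2,\quad b=\mu_1^2\mu_2^2,$$
where the four roots form two conjugate pairs $\pm i\mu_1,\pm i\mu_2$ as soon as one verifies $a>0$, $b\ge 0$, and $a^2\ge 4b$.

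The two invariants $a$ and $b$ are extracted from $\det F$ and $\mathrm{tr}(F^2)$. Since $\det\sigma=1$, $b=\det F=\det Q$; after a permutation of coordinates pairing $(x_1,\xi_2)$ with $(x_2,\xi_1)$, the matrix $Q$ becomes block-diagonal with $2\times 2$ blocks $\bigl(\begin{smallmatrix}1-\nu^2&-\omega\\-\omega&1\end{smallmatrix}\bigr)$ and $\bigl(\begin{smallmatrix}1+\nu^2&\omega\\\omega&1\end{smallmatrix}\bigr)$, giving $\det Q=(1-\nu^2-\omega^2)(1+\nu^2-\omega^2)=\ep^2(2\nu^2+\ep^2)$ by \eqref{1.thre}. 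A short block multiplication using $Q=\bigl(\begin{smallmatrix}A&B\\B^T&I\end{smallmatrix}\bigr)$ with $A=\mathrm{diag}(1-\nu^2,1+\nu^2)$ and antisymmetric $B=\bigl(\begin{smallmatrix}0&-\omega\\\omega&0\end{smallmatrix}\bigr)$ yields $\mathrm{tr}(F^2)=\mathrm{tr}(B^2)+\mathrm{tr}((B^T)^2)-2\mathrm{tr}(A)=-4(1+\omega^2)$; combined with $\mathrm{tr}(F^2)=\sum\lambda_j^2=-2a$ this gives $a=2(1+\omega^2)$. The quadratic $X^2-2(1+\omega^2)X+\ep^2(2\nu^2+\ep^2)=0$ then has discriminant
$(1+\omega^2)^2-(1-\omega^2)^2+\nu^4=4\omega^2+\nu^4=\alpha^2,$
after using $\ep^2(2\nu^2+\ep^2)=(1-\omega^2-\nu^2)(1-\omega^2+\nu^2)=(1-\omega^2)^2-\nu^4$ via \eqref{1.thre}. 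Hence $\mu_{1,2}^2=1+\omega^2\mp\alpha\ge 0$, which is \eqref{2.parone}; identity \eqref{2.partwo} is just $\mu_1^2\mu_2^2=\ep^2(2\nu^2+\ep^2)$ solved for $\mu_1^2$. The isotropic check $\nu=0$ gives $\alpha=2\omega$, hence $\mu_j=1\mp\omega$, as announced.

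For positivity and rank, I would simply complete the square,
$q=(\xi_1+\omega x_2)^2+(\xi_2-\omega x_1)^2+\ep^2 x_1^2+(2\nu^2+\ep^2)x_2^2,$
which is manifestly $\ge 0$. When $\ep>0$ the vanishing of $q$ forces $x_1=x_2=0$ and then $\xi_1=\xi_2=0$, so $q$ is positive definite; when $\ep=0$ and $\nu>0$ only the $\ep^2x_1^2$ term disappears, leaving three independent squares and hence rank $3$; when $\ep=\nu=0$ only the first two squares survive, giving rank $2$. The whole argument is routine linear algebra; the one computation I would double-check carefully is the simplification of the discriminant to $\alpha^2$, since this is where the constraint $\omega^2+\nu^2+\ep^2=1$ enters in an essential way, and where an algebra slip could derail the symplectic spectrum.
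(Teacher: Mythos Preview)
Your proof is correct and arrives at the same conclusions as the paper. The approaches differ only in execution details: the paper writes out $F=\sigma Q$ explicitly and computes the characteristic polynomial directly as $(\lambda^2+1+\omega^2)^2-(\nu^4+4\omega^2)$, while you recover the two coefficients from the invariants $\det F=\det Q$ and $\mathrm{tr}(F^2)$ via the block structure of $Q$; for the rank assertions at $\ep=0$ the paper exhibits a nonzero $3\times 3$ minor of $F$, whereas you read the rank off the completion of squares $q=(\xi_1+\omega x_2)^2+(\xi_2-\omega x_1)^2+\ep^2 x_1^2+(2\nu^2+\ep^2)x_2^2$. That decomposition is in fact recorded in the paper's \emph{N.B.}\ immediately after the proof, but is used there only to discuss the sign of $q$ when $\omega^2+\nu^2>1$; your use of it for the rank statement is arguably cleaner, since independence of the linear forms and positive semi-definiteness come out in one stroke.
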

\begin{proof}
The matrix $Q$ of  $q$ is thus
{\small \begin{equation}
\label{2.quamat}
{          Q=
\begin{pmatrix}
1-\nu^2&0&0&-\omega\\
0&1+\nu^2&\omega&0\\
0&\omega&1&0\\
-\omega&0&0&1
\end{pmatrix}},\text{\ and }
F=\sigma Q=
\begin{pmatrix}
0&\omega&1&0\\
-\omega&0&0&1\\
\nu^2-1&0&0&\omega\\
0&-\nu^2-1&-\omega&0
\end{pmatrix}.
\end{equation}}
The characteristic polynomial  $p$ of $F$ is easily seen to be even and we calculate
$$
p(\lambda)=\det (F-\lambda I_{4})= \lambda^4+2(1+\omega^2) \lambda^2+(1-\omega^2)^2-\nu^4
=(\lambda^2+1+\omega^2)^2-(\nu^4+4\omega^2).
$$
The four eigenvalues of $F$ are thus
$
\pm i\sqrt{1+\omega^2\pm\sqrt{\nu^4+4\omega^2} },
$
proving the first statement of
the lemma.
Since $(1+\omega^2)^2-\alpha^2=(1-\omega^2)^2-\nu^4=\ep^2 (2\nu^2+\ep^2)$,
we get
$
\mu_{1}^2={\ep^2 (2\nu^2+\ep^2)}/{\mu_{2}^2}.
$
The statements on the cases $\nu=0, \ep>0$ are now obvious.
When $\ep=0=\nu$, we have $\omega=1$, and  $\rank q=2$
as it is obvious on \eqref{1.isodia}.
When $\ep=0, \nu>0$, we consider the following minor determinant in $F$, cofactor of $f_{31}$
$$
\left\vert\begin{matrix}\omega&1&0\ \\ 0&0&1\ \\ -\nu^2-1&-\omega&0\ \end{matrix}\right\vert=(-1)(-\omega^2+\nu^2+1)=-2\nu^2\not=0,
$$
so that $\rank Q=\rank F=3$ in that case.
\end{proof}
\begin{nb} {\rm\small   We may note here that the condition $\omega^2+\nu^2\le 1$ is an iff condition on the real parameters $\nu, \omega$
for the quadratic form \eqref{1.fetter} to be positive semi-definite.
This is obvious on the expression \eqref{1.isodia}
in the isotropic case $\nu=0$,
and more generally,
the (non-symplectic) decomposition in independent linear forms
\begin{equation*}
q=(\xi_{1}+\omega x_{2})^2+(\xi_{2}-\omega x_{1})^2+x_{1}^2(1-\nu^2-\omega^2)
+x_{2}^2(1+\nu^2-\omega^2),
\end{equation*}
shows that $q$ has exactly one negative eigenvalue when
$\omega^2+\nu^2>1\ge \omega^2-\nu^2$,
and exactly
two negative eigenvalues when $\omega^2-\nu^2>1$.
As a result, when $\omega^2+\nu^2>1$, the operator $q^w$ is unbounded from below.
  }
\end{nb}
Using now the equations
\eqref{2.initia}, \eqref{1.fetter} and assuming that we may find a linear symplectic transformation given by a generating function \eqref{4.genfun},
we have to find
$A,B,C$ like in the lemma \ref{2.lem.symgen} with $n=2$,
so that for all $(x,\eta)\in \R^2\times \R^2$,
$$
\norm{Ax+B^* \eta}^2+\norm x^2+\nu^2(x_{2}^2-x_{1}^2)-2\omega
\bigl(x\wedge (Ax+B^* \eta)\bigr)
=\norm{ \mu(Bx+C\eta)}^2+\norm{\eta}^2,
$$
with
$
x\wedge \xi=x_{1}\xi_{2}-x_{2}\xi_{1}, \mu=\diag({\mu_{1},\mu_{2}}).
$
At this point, we see that the previous identity forces
some relationships between the matrices $A,B,C$.
However, the algebra is somewhat complicated
and assuming that $B$ is diagonal, $A, C$ are (symmetrical) with zeroes on the diagonal lead to some simplifications and to  the following results.
We introduce first some  parameters:
{\small\begin{align}
\beta_{1}&=\frac{2\omega \mu_{1}}{\alpha-2\omega^2+\nu^2}=\frac{\alpha-2\omega^2-\nu^2}{2\omega \mu_{1}}\  \text{\tiny              since $ (\alpha-2\omega^2)^2-\nu^4=4\omega^2+4\omega^4-4\omega^2\alpha=4\omega^2\mu_{1}^2$ },\\
\beta_{2}&=\frac{2\omega \mu_{2}}{\alpha+2\omega^2+\nu^2}
=\frac{\alpha+2\omega^2-\nu^2}{2\omega \mu_{2}}\
 \text{\tiny                          since $ (\alpha+2\omega^2)^2-\nu^4=4\omega^2+4\omega^4+4\omega^2\alpha=4\omega^2\mu_{2}^2$ },\label{formulebeta2}\\
&\gamma=\frac{2\alpha}{\omega},\label{formulegamma}\\
&\lambda_{1}^2=\frac{\mu_{1}}{\mu_{1}+\beta_{1}\beta_{2}\mu_{2}}=\frac{1}{1+\frac{\beta_{1}\beta_{2}\mu_{2}}{\mu_{1}}}=\frac{1}{1+\frac{\alpha+2\omega^2-\nu^2}{\alpha-2\omega^2+\nu^2}}=\frac{\alpha-2\omega^2+\nu^2}{2\alpha},\\
&\lambda_{2}^2=\frac{\mu_{2}}{\mu_{2}+\beta_{1}\beta_{2}\mu_{1}}
=\frac{1}{1+\frac{\beta_{1}\beta_{2}\mu_{1}}{\mu_{2}}}=
\frac{1}{1+\frac{\alpha-2\omega^2-\nu^2}{\alpha+2\omega^2+\nu^2}}=\frac{\alpha+2\omega^2+\nu^2}{2\alpha},
\end{align}
}and we have
\begin{align}
&\lambda_{1}^2+\lambda_{2}^2=1+\frac{\nu^2}{\alpha},\qquad
\lambda_{1}^2\lambda_{2}^2=\frac{(\alpha+\nu^2)^2-4\omega^4}{4\alpha^2}.
\end{align}
We define also
\begin{align}
&
d=\frac{\gamma \lambda_{1}\lambda_{2}}{2},\quad c=\frac{\lambda_{1}^2+\lambda_{2}^2}
{2\lambda_{1}\lambda_{2}}\quad
\text{ \footnotesize            which gives }\quad
cd=\frac{2\alpha(1+\nu^2/\alpha)}{4\omega}=\frac{\alpha+\nu^2}{2\omega}.
\end{align}
\begin{lem}\label{2.lem.calcul}
We define the $2\times 2$ matrices
$$
B=\begin{pmatrix}
\lambda_{1}^{-1}&0\\
0&\lambda_{2}^{-1}
\end{pmatrix},\quad
C=\begin{pmatrix}
0&d^{-1}\\
d^{-1}&0
\end{pmatrix},\quad
A=\begin{pmatrix}
0&\frac{d}{\lambda_{1}\lambda_{2}}-cd\\
\frac{d}{\lambda_{1}\lambda_{2}}-cd&0
\end{pmatrix}.
$$
The $4\times 4$
matrix given  with $2\times 2$ blocks by
$$\chi=\Xi_{A,B,C}  =\mat22{I_{2}}{0}{A}{I_{2}}
\mat22{B^{-1}}{0}{0}{B^*}
\mat22{I_{2}}{-C}{0}{I_{2}}
$$
belongs to $Sp(2)$
and
\begin{equation}\label{2.symmat}
\chi
=
\begin{pmatrix}
\lambda_{1}&0&0&-\frac{\lambda_{1}}{d}\\
0&\lambda_{2}&-\frac{\lambda_{2}}{d}&0\\
0&\frac{d}{\lambda_{1}}-\lambda_{2}cd&c\lambda_{2}&0\\
\frac{d}{\lambda_{2}}-\lambda_{1}cd&0&0&c\lambda_{1}\end{pmatrix},
\end{equation}
\begin{equation}
\chi^{-1}=
\begin{pmatrix}
c\lambda_{2}&0&0&\frac{\lambda_{2}}{d}\\
0&c\lambda_{1}&\frac{\lambda_{1}}{d}&0\\
0&-\frac{d}{\lambda_{2}}+\lambda_{1}cd&\lambda_{1}&0\\
-\frac{d}{\lambda_{1}}+\lambda_{2}cd&0&0&\lambda_{2}\end{pmatrix}.
\end{equation}
\end{lem}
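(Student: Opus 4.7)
The membership $\chi = \Xi_{A,B,C}\in Sp(2)$ requires no work: it is immediate from Lemma~\ref{2.lem.symgen}, since $B=\diag(\lambda_1^{-1},\lambda_2^{-1})$ is invertible and $A,C$ are symmetric (with zero diagonal, but this plays no role here). The content of the statement is therefore the two explicit formulas, and the whole proof reduces to block computations.

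For \eqref{2.symmat}, the plan is to unfold the three-factor product in the definition of $\Xi_{A,B,C}$ block by block. First, from $B^{-1}=\diag(\lambda_1,\lambda_2)$ and the off-diagonal $C$, one reads off
\begin{equation*}
-B^{-1}C = \begin{pmatrix} 0 & -\lambda_1/d \\ -\lambda_2/d & 0 \end{pmatrix},
\end{equation*}
which gives the top block row of $\chi$. Setting $a := d/(\lambda_1\lambda_2)-cd$, so that $A$ has antidiagonal entry $a$, a direct computation gives $AB^{-1}$ with antidiagonal $(a\lambda_2, a\lambda_1) = (d/\lambda_1 - cd\lambda_2,\, d/\lambda_2 - cd\lambda_1)$, matching the lower-left block. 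Finally
\begin{equation*}
AB^{-1}C = \diag\!\Bigl(\tfrac{a\lambda_2}{d},\,\tfrac{a\lambda_1}{d}\Bigr) = \diag\!\Bigl(\tfrac{1}{\lambda_1}-c\lambda_2,\, \tfrac{1}{\lambda_2}-c\lambda_1\Bigr),
\end{equation*}
so that $B^{*}-AB^{-1}C = \diag(c\lambda_2,\, c\lambda_1)$, the lower-right block. The specific definition $a=d/(\lambda_1\lambda_2)-cd$ is precisely tuned to produce this cancellation; it is the only place where the algebraic choice of $A$ enters.

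For $\chi^{-1}$, I would avoid a direct $4\times 4$ inversion and exploit the symplectic relation $\chi^{T}\sigma\chi = \sigma$. Together with $\sigma^{-1}=-\sigma$, this yields $\chi^{-1} = -\sigma\chi^T\sigma$, which in $2\times 2$ blocks $\chi = \bigl(\begin{smallmatrix} P & Q \\ R & S \end{smallmatrix}\bigr)$ collapses to the standard formula
\begin{equation*}
\chi^{-1} = \begin{pmatrix} S^T & -Q^T \\ -R^T & P^T \end{pmatrix}.
\end{equation*}
Since $P=\diag(\lambda_1,\lambda_2)$ and $S=\diag(c\lambda_2,c\lambda_1)$ are already symmetric, substituting the four blocks of \eqref{2.symmat} directly produces the announced formula for $\chi^{-1}$.

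The only genuine obstacle is bookkeeping: the large number of similarly named parameters ($\lambda_i$, $\mu_i$, $\beta_i$, $c$, $d$, $\gamma$) invites sign and index errors. As a sanity check I would verify, for instance, that the $(1,1)$-entry of $\chi\chi^{-1}$ equals
$\lambda_1\cdot c\lambda_2 + (-\lambda_1/d)(-d/\lambda_1 + cd\lambda_2) = c\lambda_1\lambda_2 + 1 - c\lambda_1\lambda_2 = 1$,
and similarly confirm the vanishing of one off-diagonal entry; this also double-checks that the cancellation forced by the choice of $a$ propagates correctly through the inverse.
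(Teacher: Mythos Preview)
Your proposal is correct. The computation of $\chi$ via the block formula $\Xi_{A,B,C}=\bigl(\begin{smallmatrix}B^{-1}&-B^{-1}C\\AB^{-1}&B^*-AB^{-1}C\end{smallmatrix}\bigr)$ is exactly how the paper proceeds (it simply says ``direct computation\dots\ left to the reader''), and your identification of the cancellation $B^*-AB^{-1}C=\diag(c\lambda_2,c\lambda_1)$ is the right observation.

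For $\chi^{-1}$ you take a different route from the paper. The paper inverts the three-factor product,
\[
\chi^{-1}=\begin{pmatrix}I_2&C\\0&I_2\end{pmatrix}\begin{pmatrix}B&0\\0&(B^*)^{-1}\end{pmatrix}\begin{pmatrix}I_2&0\\-A&I_2\end{pmatrix},
\]
and multiplies out the blocks again. You instead invoke the symplectic identity $\chi^{-1}=-\sigma\chi^T\sigma$, which in block form reads $\chi^{-1}=\bigl(\begin{smallmatrix}S^T&-Q^T\\-R^T&P^T\end{smallmatrix}\bigr)$, and then simply transpose the blocks of the already-computed $\chi$. Both are elementary, but your argument is a bit more economical: it reuses the work done for $\chi$ and makes the symplectic structure do the bookkeeping, whereas the paper's factorised inverse requires a second round of block products. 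Either way the verification is mechanical, and your final sanity check on the $(1,1)$-entry of $\chi\chi^{-1}$ is a sensible safeguard.
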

\begin{proof}
The lemma \ref{2.lem.symgen}
gives that $\chi\in Sp(2)$
and we have also
$$
\chi^{-1}=\mat22{I_{2}}{C}{0}{I_{2}}
\mat22{B}{0}{0}{{B^*}^{-1}}
\mat22{I_{2}}{0}{-A}{I_{2}}.
$$
The remaining part of the proof depends
on the formula giving $\Xi_{A,B,C}$
in the lemma \ref{2.lem.symgen}
and a direct computation
whose verification is left to the reader.
\end{proof}
\begin{lem}\label{2.lem.keydiag}
Let $\chi$ be the symplectic matrix given by \eqref{2.symmat}
and $Q$ be the matrix given in \eqref{2.quamat}.
Then, with $\mu_{j}$ given by \eqref{2.parone}, we have
\begin{equation}
\chi^* Q \chi=\diag(\mu_{1}^2, \mu_{2}^2, 1,1).
\end{equation}
\end{lem}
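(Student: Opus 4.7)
The strategy is to avoid computing the $4 \times 4$ product $\chi^{*} Q \chi$ directly and to invoke instead the generating-function reformulation developed in Section~2.2. Since $\chi = \Xi_{A,B,C}$ by construction, the identity $\chi^{*} Q \chi = \diag(\mu_{1}^{2}, \mu_{2}^{2}, 1, 1)$ is equivalent, via \eqref{2.initia}, to the scalar polynomial identity
\[
q(x, Ax + B^{*}\eta) = \mu_{1}^{2}(\lambda_{1}^{-1}x_{1} + d^{-1}\eta_{2})^{2} + \mu_{2}^{2}(\lambda_{2}^{-1}x_{2} + d^{-1}\eta_{1})^{2} + \eta_{1}^{2} + \eta_{2}^{2},
\]
for $(x,\eta) \in \R^{2} \times \R^{2}$. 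This is the version I will verify.

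Because $B$ is diagonal and $A, C$ off-diagonal, setting $a := d/(\lambda_{1}\lambda_{2}) - cd$ gives $Ax + B^{*}\eta = (a x_{2} + \lambda_{1}^{-1}\eta_{1},\, a x_{1} + \lambda_{2}^{-1}\eta_{2})$, and a direct expansion of $q$ from \eqref{1.fetter} shows that only the six monomials $x_{j}^{2}, \eta_{j}^{2}, x_{1}\eta_{2}, x_{2}\eta_{1}$ arise on the left (no $x_{1}x_{2}$, $\eta_{1}\eta_{2}$, $x_{j}\eta_{j}$ terms appear). Matching coefficients thus reduces the lemma to six scalar identities:
\begin{align*}
&\lambda_{j}^{-2} - 1 = \mu_{3-j}^{2}/d^{2} \qquad (j = 1, 2),\\
&\lambda_{1}(a-\omega) = \mu_{1}^{2}\lambda_{2}/d, \qquad \lambda_{2}(a+\omega) = \mu_{2}^{2}\lambda_{1}/d,\\
&a^{2} \mp 2\omega a + (1 \mp \nu^{2}) = \mu_{j}^{2}/\lambda_{j}^{2} \qquad (j = 1, 2).
\end{align*}

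A useful preliminary simplification is that $\lambda_{1}^{2} + \lambda_{2}^{2} = 1 + \nu^{2}/\alpha$ combined with $c = (\lambda_{1}^{2}+\lambda_{2}^{2})/(2\lambda_{1}\lambda_{2})$ and $d = \alpha\lambda_{1}\lambda_{2}/\omega$ (from \eqref{formulegamma}) collapses $a$ to the closed form $a = (\alpha - \nu^{2})/(2\omega)$, so that $2\omega(a \mp \omega) = (\alpha - \nu^{2}) \mp 2\omega^{2}$. Each of the six identities then reduces to an elementary algebraic manipulation using the two master relations $\mu_{j}^{2} = 1 + \omega^{2} \mp \alpha$ and $\alpha^{2} = \nu^{4} + 4\omega^{2}$, together with the factorizations $(\alpha \pm 2\omega^{2} - \nu^{2})(\alpha \pm 2\omega^{2} + \nu^{2}) = 4\omega^{2} \mu_{2,1}^{2}$ that are already recorded in the definitions of $\beta_{1}, \beta_{2}$. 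For example, the middle pair is immediate after substituting $d = \alpha\lambda_{1}\lambda_{2}/\omega$: it becomes $\lambda_{j}^{2} \cdot \alpha(\alpha \mp 2\omega^{2} - \nu^{2})/(2\omega^{2}) = \mu_{j}^{2}$, which is exactly the factorization above combined with $\lambda_{j}^{2} = (\alpha \pm (\nu^{2} - 2\omega^{2}))/(2\alpha)$. The first pair follows similarly from $d^{2} = (\alpha - 2\omega^{2} + \nu^{2})(\alpha + 2\omega^{2} + \nu^{2})/(4\omega^{2})$, and the third pair is then obtained by combining the previous two (adding and subtracting the two cross-term relations after multiplying them out).

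The main obstacle is purely clerical: keeping careful track of the asymmetry between $(\mu_{1}, \lambda_{1}, \beta_{1})$ and $(\mu_{2}, \lambda_{2}, \beta_{2})$, which enter the master relations with opposite signs of $\alpha$. No new idea is required beyond patient bookkeeping, which is precisely why the author defers the verification to the reader.
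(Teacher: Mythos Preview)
Your approach is correct and genuinely different from the paper's. The appendix proof performs the brute-force $4\times4$ matrix product $\chi^{*}Q\chi$ in two stages, checks the easy zero entries $\tilde q_{12}=\tilde q_{13}=\tilde q_{24}=\tilde q_{34}=0$, then computes $\tilde q_{14},\tilde q_{23}$ and each diagonal entry $\tilde q_{jj}$ separately as rational expressions in $\alpha,\omega,\nu$ that are simplified by hand. You instead invoke the equivalence \eqref{2.initia} that the paper set up but did not exploit for the verification: this reduces the matrix identity to a scalar polynomial identity in $(x,\eta)$, whose structure (only six monomials survive) you identify correctly. Your key simplification $a=d/(\lambda_{1}\lambda_{2})-cd=(\alpha-\nu^{2})/(2\omega)$ is valid and makes the six coefficient checks essentially one-line consequences of $\alpha^{2}=\nu^{4}+4\omega^{2}$ together with the factorizations $(\alpha\pm2\omega^{2})^{2}-\nu^{4}=4\omega^{2}\mu_{2,1}^{2}$ already recorded in the paper. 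Both routes are clerical, but yours uses the generating-function machinery more consistently and avoids writing out the intermediate $4\times4$ product; the paper's direct computation is more self-contained but longer. One minor remark: your ``combining the previous two'' for the third pair is a bit terse---it is probably cleaner to verify that pair directly, e.g.\ $\mu_{1}^{2}/\lambda_{1}^{2}=\alpha(a-\omega)/\omega$ from the cross-term relation and then check $\alpha(a-\omega)/\omega=a^{2}-2\omega a+1-\nu^{2}$ reduces to $\alpha^{2}=\nu^{4}+4\omega^{2}$.
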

The (tedious)
proof of that lemma is given in the appendix \ref{8.app.somec}.\par
Using the expression of $\chi^{-1}$ in \eqref{2.symmat},
defining
\begin{equation}\label{2.lastcomp}
\begin{pmatrix}
y_{1}\\ y_{2}\\\eta_{1}\\\eta_{2}
\end{pmatrix}
=\begin{pmatrix}
c\lambda_{2}&0&0&\frac{\lambda_{2}}{d}\\
0&c\lambda_{1}&\frac{\lambda_{1}}{d}&0\\
0&-\frac{d}{\lambda_{2}}+\lambda_{1}cd&\lambda_{1}&0\\
-\frac{d}{\lambda_{1}}+\lambda_{2}cd&0&0&\lambda_{2}\end{pmatrix}
\begin{pmatrix}
x_{1}\\x_{2}\\\xi_{1}\\\xi_{2}
\end{pmatrix},
\end{equation}
we get from the lemma \ref{2.lem.keydiag} the following result.
\begin{lem}\label{2.lem.effectq}
For $(x_{1},x_{2},\xi_{1},\xi_{2})\in \R^4$, $(y_{1}, y_{2},\eta_{1},\eta_{2})\in \R^4$
given by \eqref{2.lastcomp}, we have the following identity,
{\small
\begin{align*}
\mu_{1}^2y_{1}^2+\mu_{2}^2y_{2}^2+\eta_{1}^2+\eta_{2}^2&=
\mu_{1}^2
\bigl(
c\lambda_{1}x_{2}+\lambda_{2}d^{-1}\xi_{2}
\bigr)^2
+\mu_{2}^2
\bigl(
c\lambda_{2}x_{1}+\lambda_{1}d^{-1}\xi_{1}
\bigr)^2
\\&\hs +\bigl(
(-d\lambda_{2}^{-1}+\lambda_{1}cd)x_{2}+\lambda_{1}\xi_{1}\bigr)^2
+\bigl(
(-d\lambda_{1}^{-1}+\lambda_{2}cd)x_{1}+\lambda_{2}\xi_{2}\bigr)^2
\\&=
\xi_{1}^2+\xi_{2}^2+(1-\nu^2) x_{1}^2+
(1+\nu^2) x_{2}^2-2\omega(x_{1}\xi_{2}-x_{2}\xi_{1}),
\end{align*}
}where the parameters $c, \lambda_{2}, d, \lambda_{1}$ are defined above
(note that all these parameters are well-defined when $(\omega,\nu)$
are both positive with $\omega^2+\nu^2<1$).
\end{lem}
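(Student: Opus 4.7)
The plan is to deduce the lemma as an essentially immediate corollary of Lemma~\ref{2.lem.keydiag}, which already contains the hard computational content, and then to separately unfold the middle (squared-coordinates) expression by plugging the explicit formulas of \eqref{2.lastcomp} into $\mu_1^2 y_1^2+\mu_2^2 y_2^2+\eta_1^2+\eta_2^2$. So the argument splits into two independent verifications, one conceptual and one mechanical.

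First, I would set $X=(x_1,x_2,\xi_1,\xi_2)^T$ and $Y=(y_1,y_2,\eta_1,\eta_2)^T$ and observe that the definition \eqref{2.lastcomp} is precisely $Y=\chi^{-1}X$, with $\chi^{-1}$ given by \eqref{2.symmat}. Equivalently, $X=\chi Y$. The right-most expression in the lemma is, by definition of $Q$ in \eqref{2.quamat}, just $\poscal{QX}{X}$. Writing $X=\chi Y$ and moving $\chi$ to the other side of the inner product gives
\[
\poscal{QX}{X}=\poscal{Q\chi Y}{\chi Y}=\poscal{\chi^*Q\chi\,Y}{Y},
\]
which by Lemma~\ref{2.lem.keydiag} equals $\mu_1^2 y_1^2+\mu_2^2 y_2^2+\eta_1^2+\eta_2^2$. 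This handles the equality between the first and the third expressions in the lemma; the substantive symplectic/algebraic identities needed here are all already encoded in Lemma~\ref{2.lem.keydiag}.

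Next, to obtain the middle expression, I would simply substitute the four linear forms defined in \eqref{2.lastcomp} into $\mu_1^2 y_1^2+\mu_2^2 y_2^2+\eta_1^2+\eta_2^2$, term by term: $y_1=c\lambda_2 x_1+\lambda_2 d^{-1}\xi_2$ contributes $\mu_1^2(c\lambda_2 x_1+\lambda_2 d^{-1}\xi_2)^2$, and similarly for the other three forms. The only subtlety is a labelling one: the expression displayed in the lemma swaps some indices (it writes $\mu_1^2$ in front of a term involving $x_2,\xi_2$ and $\mu_2^2$ in front of a term involving $x_1,\xi_1$), which simply reflects the pattern of off-diagonal entries of $\chi^{-1}$ in \eqref{2.symmat}. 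No computation beyond this substitution is required.

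The only step that deserves care is verifying Lemma~\ref{2.lem.keydiag} itself, which is relegated to the appendix; once that is granted, the present lemma reduces to the two routine manipulations above. The main pitfall to watch for in the write-up is bookkeeping the pairing between $(y_1,\eta_1)$ and $(y_2,\eta_2)$ and the corresponding eigenvalues $\mu_1,\mu_2$ so that the explicit forms in \eqref{2.lastcomp} are matched with the correct $\mu_j^2$-weight; this is purely clerical and can be checked by inspecting the rows of $\chi^{-1}$ in \eqref{2.symmat}.
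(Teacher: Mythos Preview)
Your proposal is correct and is essentially the paper's own argument: the paper states just before the lemma that the result ``follows from the lemma \ref{2.lem.keydiag}'' together with the definition \eqref{2.lastcomp} of $Y=\chi^{-1}X$, which is exactly your change-of-variables identity $\poscal{QX}{X}=\poscal{\chi^*Q\chi\,Y}{Y}$ followed by direct substitution of the rows of $\chi^{-1}$ to obtain the middle expression. Your remark about the index pattern is the right observation; any residual mismatch in the displayed middle formula is a typographical slip in the paper rather than a gap in your reasoning.
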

We have achieved an explicit diagonalization of the quadratic form \eqref{1.fetter} and, most importantly, that diagonalization is performed via a symplectic mapping.
That feature will be of particular importance in our next section.
Expressing  the parameters in terms of
$\alpha, \omega,\nu, \ep$
(cf. section \ref{notations}), we obtain
{\small\begin{align*}
q&=\Bigl(2^{-1/2}\alpha^{-1/2}(\alpha-2\omega^2+\nu^2)^{1/2}\xi_{1}
-2^{-3/2}\omega^{-1}\alpha^{-1/2}(\alpha-2\omega^2+\nu^2)^{1/2}(\alpha-\nu^2)x_{2}\Bigr)^2
\\&
+
\Bigl(2^{-1/2}\alpha^{-1/2}\bigl(\frac{\alpha+2\omega^2-\nu^2}{2\nu^2+\ep^2}\bigr)^{1/2}
\frac{(2\nu^2\ep^2+\ep^4)^{1/2}}{\mu_{2}}
\xi_{2}
\\&
\hskip84pt
+\frac{(2\nu^2\ep^2+\ep^4)^{1/2}}{\mu_{2}}
(\alpha^{1/2}+\nu^2\alpha^{-1/2})
2^{-3/2}\omega^{-1}
\bigl(\frac{\alpha+2\omega^2-\nu^2}{2\nu^2+\ep^2}\bigr)^{1/2}x_{1}
\Bigr)^2
\\&
+
\Bigl((1+\omega^2+\alpha)^{1/2}
2^{1/2}\alpha^{-1/2}\omega
(\alpha+2\omega^2+\nu^2)^{-1/2}
\xi_{1}
\\&
\hskip84pt
+
(1+\omega^2+\alpha)^{1/2}
(1+\alpha^{-1}\nu^2) 2^{-1/2}\alpha^{1/2}
(\alpha+2\omega^2+\nu^2)^{-1/2}
x_{2}\Bigr)^2
\\&+
\Bigl(
2^{-1/2}\alpha^{-1/2}
(\alpha+2\omega^2+\nu^2)^{1/2}
\xi_{2}
-2^{-3/2}
\omega^{-1}\alpha^{-1/2}
(\alpha-\nu^2)
(\alpha+2\omega^2+\nu^2)^{1/2}x_{1}
\Bigr)^2,
\end{align*}}
so that
\begin{align}\label{mastereqhere}\begin{split}
q&=\overbrace{\Bigl(\frac{\alpha-2\omega^2+\nu^2}{2\alpha}\Bigr)
\Bigl[\xi_{1}-\bigl(\frac{\alpha-\nu^2}{2\omega}\bigr)x_{2}\Bigr]^2}^{\eta_{1}^2}
+\overbrace{
\Bigl(\frac{\alpha+2\omega^2-\nu^2}{2\alpha\mu_{2}^2}\Bigr)\ep^2
\Bigl[\xi_{2}+\bigl(\frac{\alpha+\nu^2}{2\omega}\bigr)x_{1}\Bigr]^2}^{\mu_{1}^2y_{1}^2}
\\&+
\underbrace{
2\omega^2
\Bigl(\frac{1+\omega^2+\alpha}{\alpha(\alpha+2\omega^2+\nu^2)}\Bigr)
\Bigl[\xi_{1}+\bigl(\frac{\alpha+\nu^2}{2\omega}\bigr)x_{2}\Bigr]^2}_{\mu_{2}^2 y_{2}^2}
+
\underbrace{\Bigl(\frac{\alpha+2\omega^2+\nu^2}{2\alpha}\Bigr)
\Bigl[\xi_{2}-\bigl(\frac{\alpha-\nu^2}{2\omega}\bigr)x_{1}\Bigr]^2}_{\eta_{2}^2}.
\end{split}
\end{align}
The equation \eqref{mastereqhere}
encapsulates most of our previous work on the diagonalization of $q$.
In the appendix \ref{a.double},
we provide another way of checking the symplectic relationships between the
linear forms, $y_{j},\eta_{l}$.
\par
We have seen in Lemma \ref{2.lem.effect}
that when $\ep=0,\nu>0$, the rank of  $q$ is  3, whereas its symplectic rank is 2.
Indeed,  $\ep=0$ and  $\nu>0$, we have
\begin{align}\label{master0h}\begin{split}
q&=\overbrace{\Bigl(\frac{\alpha-2\omega^2+\nu^2}{2\alpha}\Bigr)
\Bigl[\xi_{1}-\bigl(\frac{\alpha-\nu^2}{2\omega}\bigr)x_{2}\Bigr]^2}^{\eta_{1}^2}
+
\\&+
\underbrace{
2\omega^2
\Bigl(\frac{1+\omega^2+\alpha}{\alpha(\alpha+2\omega^2+\nu^2)}\Bigr)
\Bigl[\xi_{1}+\bigl(\frac{\alpha+\nu^2}{2\omega}\bigr)x_{2}\Bigr]^2}_{\mu_{2}^2 y_{2}^2}
+
\underbrace{\Bigl(\frac{\alpha+2\omega^2+\nu^2}{2\alpha}\Bigr)
\Bigl[\xi_{2}-\bigl(\frac{\alpha-\nu^2}{2\omega}\bigr)x_{1}\Bigr]^2}_{\eta_{2}^2}.
\end{split}
\end{align}
\section{Quantization}
\subsection{The Irving E. Segal formula}
Let  $a$ be
defined on  $\R^n_{x}\times \R^n_{\xi}$
(say a tempered distribution on $\RZ$).
Its Weyl quantization is the operator,
acting for instance on $u\in \mathscr S(\R^n)$,
\begin{equation}\label{3.weylf}
(a^w u)(x)=\iint e^{2i\pi(x-x')\xi} a(\frac{x+x'}{2},\xi) u(x') dx' d\xi.
\end{equation}
In fact, the weak formula
$
\poscal{a^w u}{v}=\int_{\RZ} a(x,\xi)\mathcal H(u,v)(x,\xi) dx d\xi$
makes sense for $a\in \mathscr S'(\RZ), u,v\in \mathscr S(\R^n)$
since the { Wigner function} $\mathcal H(u,v)$ defined by
$$\mathcal H(u,v)(x,\xi) =\int e^{-2i\pi x' \xi} u(x+\frac{ x'}2)\bar v(x-\frac {x'}2)dx'
$$
belongs to
$\mathscr S(\RZ)$ for $u,v\in \mathscr S(\R^n)$ .
Note also our definition of the Fourier transform
$\hat u (\xi)=\int e^{-2i\pi x\cdot \xi} u(x) dx$
(so that  $ u (x)=\int e^{2i\pi x\cdot \xi} \hat u(\xi) d\xi$)
and
$$
\xi_{j}^wu=\frac{1}{2i\pi}\frac{\p u}{\p x_{j}}=D_{j}u,      \quad x_{j}^w u=x_{j}u,
\quad
(x_{j}\xi_{j})^w=\frac12\bigl(x_{j}D_{j}+D_{j}x_{j}\bigr).
$$
Let  $\chi$ be a linear symplectic transformation $\chi (y,\eta)=(x,\xi)$. The
Segal  formula (see e.g. the theorem 18.5.9 in
\cite{MR2304165}) asserts that there exists a unitary transformation  $M$
of  $L^2(\R^n)$,
uniquely determined apart from a constant factor of modulus one,
which is also an automorphism of $\mathscr S(\R^n)$
and  $\mathscr S'(\R^n)$
such that, for all $a\in \mathscr S'(\RZ)$,
\begin{equation}\label{3.segalf}
(a\circ \chi)^w=M^* a^w M,
\end{equation}
providing the following commutative diagrams
$$
\begin{CD}
\mathscr S(\R^n_{x}) @>a^w>> \mathscr S'(\R^n_{x})\\
@AMAA @VVM^*V\\
\mathscr S(\R^n_{y}) @>>(a\circ \chi)^w>      \mathscr S'(\R^n_{y})
\end{CD}
\hs\text{\small and if $a^w\in \mathcal L(L^2(\R^n))$}\hs
\begin{CD}
L^2(\R^n_{x}) @>a^w>> L^2(\R^n_{x})\\
@AMAA @VVM^*V\\
L^2(\R^n_{y}) @>>(a\circ \chi)^w>      L^2(\R^n_{y})
\end{CD}
$$
\subsection{The metaplectic group and the generating functions}
For a given $\chi$, how can we determine $M$ ?
We shall not need here the rich algebraic structure of the two-fold covering
$Mp(n)$ (the metaplectic group in which live the transformations  $M$) of the symplectic group $Sp(n)$.
The following lemma is classical
(and also easy to prove directly using the factorization of the lemma \ref{2.lem.symgen}) and provides a simple expression for $M$ when the transformation $\chi$
has a generating function.
\begin{lem}\label{3.lem.metapl}
Let $\chi=\Xi_{A,B,C}$ be the  symplectic mapping
given by \eqref{2.symgen}.
 Then the Segal formula \eqref{3.segalf} holds with
\begin{equation}\label{gen}
(Mv)(x)=\int e^{2i\pi S(x,\eta)}\widehat v(\eta) d\eta\val{\det B}^{1/2},
\end{equation}
where $S$ is given by  \eqref{4.genfun}.
\end{lem}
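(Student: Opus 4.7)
The natural strategy is to split $\chi = \Xi_{A,B,C}$ along the factorization in \eqref{2.symgen} and to compose metaplectic lifts of the three elementary factors. Concretely, I would lift $\chi_{3}=\bigl(\begin{smallmatrix}I&0\\A&I\end{smallmatrix}\bigr)$ (the shear $(x,\xi)\mapsto(x,Ax+\xi)$) to the quadratic-phase multiplication $(M_{3}u)(x)=e^{i\pi\poscal{Ax}{x}}u(x)$; lift $\chi_{2}=\bigl(\begin{smallmatrix}B^{-1}&0\\0&B^{*}\end{smallmatrix}\bigr)$ to the unitary dilation $(M_{2}u)(x)=\val{\det B}^{1/2}u(Bx)$; and lift $\chi_{1}=\bigl(\begin{smallmatrix}I&-C\\0&I\end{smallmatrix}\bigr)$ (the shear $(x,\xi)\mapsto(x-C\xi,\xi)$) to the Fourier multiplier $(M_{1}v)(x)=\int e^{2i\pi x\cdot\eta+i\pi\poscal{C\eta}{\eta}}\hat v(\eta)\,d\eta$.

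Second, I would check the Segal identity $(a\circ\chi_{j})^{w}=M_{j}^{-1}a^{w}M_{j}$ separately for $j=1,2,3$. Since $(a\circ\chi)^{w}$ and $M^{-1}a^{w}M$ are both continuous in $a\in\mathscr S'(\RZ)$, it suffices to check the identity on the linear symbols $a(x,\xi)=x_{k}$ and $a(x,\xi)=\xi_{k}$, using that the Weyl calculus is compatible with the Heisenberg commutation relations. For $M_{3}$, the identity $M_{3}^{-1}D_{k}M_{3}=(Ax)_{k}+D_{k}$ (obtained by differentiating $e^{\pm i\pi\poscal{Ax}{x}}$) gives the result. For $M_{2}$, the change of variables formula yields $M_{2}^{-1}x_{k}M_{2}=(B^{-1}x)_{k}$ and $M_{2}^{-1}D_{k}M_{2}=(B^{*}D)_{k}$. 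For $M_{1}$, using the commutator rule $[x_{k},g(D)]=\frac{1}{2i\pi}(\partial_{\xi_{k}}g)(D)$ applied to $g(\eta)=e^{i\pi\poscal{C\eta}{\eta}}$ yields $M_{1}^{-1}x_{k}M_{1}=x_{k}-(CD)_{k}$, while $M_{1}^{-1}D_{k}M_{1}=D_{k}$ is immediate since $M_{1}$ is a Fourier multiplier.

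Third, since the Segal formula composes, $M:=M_{3}M_{2}M_{1}$ realizes the lift of $\chi=\chi_{3}\chi_{2}\chi_{1}=\Xi_{A,B,C}$. A direct computation then gives
\begin{equation*}
(Mv)(x)=e^{i\pi\poscal{Ax}{x}}\val{\det B}^{1/2}\int e^{2i\pi\poscal{Bx}{\eta}+i\pi\poscal{C\eta}{\eta}}\hat v(\eta)\,d\eta=\val{\det B}^{1/2}\int e^{2i\pi S(x,\eta)}\hat v(\eta)\,d\eta,
\end{equation*}
with $S$ as in \eqref{4.genfun}. Unitarity of $M$ on $L^{2}(\R^{n})$ follows from unitarity of each of $M_{1},M_{2},M_{3}$; the normalizing factor $\val{\det B}^{1/2}$ is precisely what is needed so that $M_{2}$ is an isometry, and the metaplectic phase ambiguity stated just above the lemma is absorbed in this prefactor up to a global sign.

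The only mildly delicate point is the sign tracking in the Fourier symbol of $M_{1}$: choosing $e^{-i\pi\poscal{C\eta}{\eta}}$ instead of $e^{+i\pi\poscal{C\eta}{\eta}}$ would send $\chi_{1}$ to its inverse and flip the sign of the $C$-term in $S$, so the commutator computation for $[x_{k},M_{1}]$ has to be carried out carefully. Once this is done, the rest is routine bookkeeping based on the factorization already recorded in Lemma~\ref{2.lem.symgen}.
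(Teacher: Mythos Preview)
Your proposal is correct and follows exactly the approach the paper itself indicates: the paper states that the lemma is ``classical (and also easy to prove directly using the factorization of the lemma~\ref{2.lem.symgen})'' and gives no further proof, while the subsequent lemma in the paper carries out precisely the composition $M=M_{3}M_{2}M_{1}$ you describe, arriving at the same explicit formula $(Mv)(x)=e^{i\pi\poscal{Ax}{x}}\int e^{2i\pi\poscal{Bx}{\eta}}e^{i\pi\poscal{C\eta}{\eta}}\hat v(\eta)\,d\eta$. The only point worth tightening is the justification that agreement on linear symbols forces agreement on all of $\mathscr S'(\RZ)$: continuity alone is not enough, and you should invoke either that the exponentials $e^{2i\pi(\tau\cdot x+t\cdot\xi)}$ (obtained by exponentiating linear symbols via the Heisenberg relations) are total in $\mathscr S'$, or that both sides respect the Weyl product so agreement on generators propagates to polynomials and then by density to all symbols.
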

\subsection{Explicit expression for $M$}
\begin{lem}
Let $\chi$ be the symplectic transformation of $\R^4$
given by \eqref{2.symmat}.
Then the Segal formula \eqref{3.segalf} holds with
$M$ given by
\begin{multline}
(Mv)(x_{1},x_{2})=(\lambda_{1}\lambda_{2})^{-1/2}
e^{2i\pi d\left((\lambda_{1}\lambda_{2})^{-1} -c\right)x_{1}x_{2}}
      \\
      \times \iint e^{2i\pi d^{-1}\eta_{1}\eta_{2}}
\widehat v(\eta_{1},\eta_{2}) e^{2i\pi(\lambda_{1}^{-1}x_{1}\eta_{1}+\lambda_{2}^{-1}x_{2}\eta_{2})} d\eta_{1} d\eta_{2},
\end{multline}
\begin{equation}\label{3.metapl}
(Mv)(x_{1},x_{2})=(\lambda_{1}\lambda_{2})^{-1/2}
e^{2i\pi d\left((\lambda_{1}\lambda_{2})^{-1} -c\right)x_{1}x_{2}}
(e^{2i\pi d^{-1}D_{1}D_{2}}v)(\lambda_{1}^{-1}x_{1},\lambda_{2}^{-1}x_{2}).
\end{equation}
\end{lem}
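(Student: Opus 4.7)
The plan is to apply Lemma~\ref{3.lem.metapl} to the symplectic mapping $\chi=\Xi_{A,B,C}$ constructed in Lemma~\ref{2.lem.calcul}, and then repackage the resulting oscillatory integral as the Fourier multiplier appearing in \eqref{3.metapl}. First I would read off the generating function $S$ by plugging the specific blocks $A$, $B$, $C$ of Lemma~\ref{2.lem.calcul} into the definition \eqref{4.genfun}. Because $A$ and $C$ are symmetric with vanishing diagonal and $B$ is diagonal, this gives
\[ S(x,\eta)=d\bigl((\lambda_1\lambda_2)^{-1}-c\bigr)x_1x_2+\lambda_1^{-1}x_1\eta_1+\lambda_2^{-1}x_2\eta_2+d^{-1}\eta_1\eta_2, \]
together with $\val{\det B}^{1/2}=(\lambda_1\lambda_2)^{-1/2}$.

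Next, substituting these data into the formula \eqref{gen} of Lemma~\ref{3.lem.metapl} would yield at once the first displayed identity of the lemma: the monomial $d\bigl((\lambda_1\lambda_2)^{-1}-c\bigr)x_1x_2$ is $\eta$-free and pulls out as a prefactor, the mixed terms $\lambda_j^{-1}x_j\eta_j$ build up the oscillating kernel $e^{2i\pi(\lambda_1^{-1}x_1\eta_1+\lambda_2^{-1}x_2\eta_2)}$, and the purely $\eta$-quadratic term contributes the factor $e^{2i\pi d^{-1}\eta_1\eta_2}$ that remains inside the integral.

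The third step is to interpret that double integral as an inverse Fourier transform evaluated at a rescaled point. Indeed, by the very definition of a Fourier multiplier, the operator $e^{2i\pi d^{-1}D_1D_2}$ acts on $\widehat v$ by multiplication by $e^{2i\pi d^{-1}\eta_1\eta_2}$, so that
\[ (e^{2i\pi d^{-1}D_1D_2}v)(y_1,y_2)=\iint e^{2i\pi(y_1\eta_1+y_2\eta_2)}e^{2i\pi d^{-1}\eta_1\eta_2}\widehat v(\eta_1,\eta_2)\,d\eta_1\,d\eta_2. \]
Choosing $(y_1,y_2)=(\lambda_1^{-1}x_1,\lambda_2^{-1}x_2)$ rewrites the integral in the first displayed formula exactly as $(e^{2i\pi d^{-1}D_1D_2}v)(\lambda_1^{-1}x_1,\lambda_2^{-1}x_2)$, giving \eqref{3.metapl}.

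There is essentially no serious obstacle here: once the generating function has been identified, the whole argument is bookkeeping. The one technical subtlety worth flagging is that the symbol $e^{2i\pi d^{-1}\xi_1\xi_2}$ is not a symbol of finite order, so the intermediate identities should first be read on $v\in\mathscr S(\R^2)$; they then extend by continuity since $e^{2i\pi d^{-1}D_1D_2}$ is unitary on $L^2(\R^2)$ (its symbol has modulus one). All other checks reduce to verifying the numerical identities satisfied by the parameters $\lambda_1,\lambda_2,c,d$ introduced before Lemma~\ref{2.lem.calcul}.
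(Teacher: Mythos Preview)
Your proposal is correct and follows essentially the same approach as the paper: both apply Lemma~\ref{3.lem.metapl} to the specific blocks $A,B,C$ of Lemma~\ref{2.lem.calcul}, expand the generating function $S$, and then recognize the resulting integral as the Fourier multiplier $e^{2i\pi d^{-1}D_1D_2}$ evaluated at the rescaled point. Your write-up is in fact more explicit than the paper's own proof, which merely records the intermediate formula $(Mv)(x)=e^{i\pi\poscal{Ax}{x}}\int e^{2i\pi\poscal{Bx}{\eta}}e^{i\pi\poscal{C\eta}{\eta}}\widehat v(\eta)\,d\eta$ and leaves the remaining bookkeeping to the reader.
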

 \begin{proof}
We apply the lemmas \ref{3.lem.metapl} and \ref{2.lem.calcul},
along with the fact that the mapping $Mp(n)\ni M\mapsto \chi\in Sp(n)$
 is an homomorphism
or more elementarily that \eqref{3.segalf} implies for $\chi_{j}\in Sp(n)$,
 $$
(a\circ \chi_{2}\circ \chi_{1})^w=M_{1}^* (a\circ \chi_{2})^w M_{1}
=
M_{1}^* M_{2}^*a^w M_{2} M_{1}.
$$
The factorization of the lemma \ref{2.lem.calcul}
 implies that
$$
(Mv)(x)= e^{i\pi\poscal{Ax}{x}}\int_{\R^2} e^{2i\pi\poscal{Bx}{\eta}}
e^{i\pi\poscal{C\eta}{\eta}}\widehat v(\eta) d\eta,
$$
 which gives readily the formulas above.
\end{proof}
Summing-up, we have proven the following result.
\begin{theorem}\label{3.thm.algwork}
Let $q$ be the quadratic form on $\R^4$
given by \eqref{1.fetter}.
We define the symplectic mapping $\chi$ by \eqref{2.symmat}
and the metaplectic mapping $M$ by \eqref{3.metapl}.
We have
\begin{align}
(q\circ\chi)(y,\eta)&=\mu_{1}^2y_{1}^2+\mu_{2}^2y_{2}^2+\eta_{1}^2+\eta_{2}^2,\quad\text{\small $($the $\mu_{j}^2$ are given by \eqref{2.parone}$)$,}
\\
(q\circ\chi)^w&= M^* q^w M.
\end{align}
\end{theorem}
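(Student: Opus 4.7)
The statement combines two facts that are essentially already in place: a symplectic diagonalization of the classical symbol $q$, and a quantized intertwining via the Segal formula. The plan is therefore to verify that each half of Theorem \ref{3.thm.algwork} follows from material already proven, and to assemble them.

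For the first identity, the plan is to reduce it to Lemma \ref{2.lem.keydiag}. Writing $Y=(y_1,y_2,\eta_1,\eta_2)$ and using the matrix identification of $q$ with $Q$ from \eqref{2.quamat}, one has
\begin{equation*}
(q\circ\chi)(Y)=\poscal{Q\chi Y}{\chi Y}=\poscal{\chi^{*}Q\chi Y}{Y},
\end{equation*}
and Lemma \ref{2.lem.keydiag} asserts precisely $\chi^{*}Q\chi=\diag(\mu_{1}^{2},\mu_{2}^{2},1,1)$, which yields the desired normal form. (Alternatively, Lemma \ref{2.lem.effectq} gives the identity directly by performing the substitution \eqref{2.lastcomp}; and \eqref{mastereqhere} already exhibits the four square summands $\eta_1^2,\mu_1^2 y_1^2,\mu_2^2 y_2^2,\eta_2^2$.)

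For the second identity, the plan is to invoke the Segal formula \eqref{3.segalf} applied to our specific $\chi$. By Lemma \ref{2.lem.calcul}, $\chi=\Xi_{A,B,C}$ admits a generating function $S$ as in \eqref{4.genfun}, hence Lemma \ref{3.lem.metapl} produces a metaplectic lift $M_0$ satisfying $(q\circ\chi)^{w}=M_0^{*}q^{w}M_0$ and given by the Fresnel-type integral \eqref{gen}. The remaining task is to check that $M_0$ coincides (up to a phase factor of modulus one, which is immaterial in the conjugation $M^{*}q^{w}M$) with the operator $M$ described by \eqref{3.metapl}. Substituting the specific $A,B,C$ from Lemma \ref{2.lem.calcul} into $S(x,\eta)=\frac12(\poscal{Ax}{x}+2\poscal{Bx}{\eta}+\poscal{C\eta}{\eta})$, one identifies three contributions: the factor $|\det B|^{1/2}=(\lambda_1\lambda_2)^{-1/2}$, the quadratic phase $e^{i\pi\poscal{Ax}{x}}=e^{2i\pi d((\lambda_1\lambda_2)^{-1}-c)x_1x_2}$ coming from the off-diagonal entries of $A$, and the Fourier-side phase $e^{i\pi\poscal{C\eta}{\eta}}=e^{2i\pi d^{-1}\eta_1\eta_2}$, which when pulled through the Fourier transform becomes the operator $e^{2i\pi d^{-1}D_1 D_2}$ acting on $v(\lambda_1^{-1}x_1,\lambda_2^{-1}x_2)$. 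This is exactly \eqref{3.metapl}. The Segal intertwining thus transfers to the very $M$ stated in the theorem.

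The only genuine obstacle is bookkeeping: checking that the constant $(\lambda_1\lambda_2)^{-1/2}$ and the cross-term $((\lambda_1\lambda_2)^{-1}-c)$ in the phase arise with the correct signs from the three-factor decomposition \eqref{2.symgen} of $\Xi_{A,B,C}$, and that no extraneous unimodular constant appears. Since $M$ is determined by $\chi$ only up to a phase, any such constant is absorbed in \eqref{3.segalf} without affecting the conjugation formula; so in fact no sharp normalization is required to conclude $(q\circ\chi)^w=M^{*}q^{w}M$. With both halves in hand, Theorem \ref{3.thm.algwork} follows by concatenating Lemma \ref{2.lem.keydiag}, Lemma \ref{3.lem.metapl}, and the explicit computation of $M$ supplied just above the theorem.
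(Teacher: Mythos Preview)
Your proposal is correct and follows essentially the same route as the paper: the first identity is reduced to Lemma~\ref{2.lem.keydiag} via $\poscal{\chi^{*}Q\chi Y}{Y}$, and the second is the Segal formula \eqref{3.segalf} instantiated through Lemma~\ref{3.lem.metapl} with the explicit $A,B,C$ of Lemma~\ref{2.lem.calcul}, which is exactly how the paper derives \eqref{3.metapl} in the lemma immediately preceding the theorem. The paper itself presents Theorem~\ref{3.thm.algwork} as a summary (``Summing-up, we have proven the following result''), so your assembly of those ingredients matches the intended argument.
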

We can also explicitly quantize the formulas of the lemma \ref{2.lem.effectq}, to obtain\footnote{Note that for a linear form  $L$ on $\RZ$, $L^wL^w=(L^2)^w$.}
\begin{multline}\label{diagopnew}
q^w=
\overbrace{\Bigl((\lambda_{1}cd-d\lambda_{2}^{-1})x_{2}+\lambda_{1}D_{x_{1}}\Bigr)^2}^{(\eta_{1}^2)^w}
+
\overbrace{\mu_{1}^2\Bigl(\lambda_{2}d^{-1}D_{x_{2}}+c\lambda_{2}{x_{1}}\Bigr)^2}^{\mu_{1}^2(y_{1}^2)^w}
\\
+
\underbrace{\Bigl((\lambda_{2}cd-d\lambda_{1}^{-1})x_{1}+\lambda_{2}D_{x_{2}}\Bigr)^2}_{(\eta_{2}^2)^w}
+
\underbrace{\mu_{2}^2\Bigl(\lambda_{1}d^{-1}D_{x_{1}}+c\lambda_{1}{x_{2}}\Bigr)^2}_{\mu_{2}^2(y_{2}^2)^w}.
\end{multline}
\section{The Fock-Bargmann space and the anisotropic $LLL$ }
\subsection{Nonnegative quantization and entire functions}
\begin{defi}
For  $X,Y\in \RZ$ we set
\begin{equation}\label{4.project}
\Pi(X,Y)=e^{-\frac{\pi}{2}\val{X-Y}^2}e^{-i\pi[X,Y]},
\end{equation}
where $[X,Y]=\poscal{\sigma X}{Y}$ is the symplectic form
\eqref{2.symfor}.
For $v\in L^2(\R^n)$,
we define
\begin{equation}\label{4.doubleu}
(Wv)(y,\eta)=\poscal{v}{\varphi_{y,\eta}}_{L^2(\R^n)},\quad\text{with\hs}
\varphi_{y,\eta}(x)=2^{n/4}e^{-\pi(x-y)^2}e^{2i\pi(x-\frac y2)\eta}.
\end{equation}
We define also
\begin{equation}\label{4.lllpi}
\Lambda_{0} \text{ =$\{u\in L^2(\R^{2n}_{y, \eta})$ such that $u=f(z) e^{-\frac{\pi}{2}\val z^2}$, $z=\eta+iy$ , $f$ entire.\}}
\end{equation}
\end{defi}
\begin{pro}\label{4.pro.pizer0}
The operator $\Pi_{0}$ with kernel
 $\Pi(X,Y)$is the orthogonal projection
in $L^2(\RZ)$
on  $\Lambda_{0}$,
which is a  proper closed subspace of $L^2(\RZ)$,
canonically isomorphic to $L^2(\R^n)$.
We  have
\begin{align}
\Lambda_{0}&=\text{\rm ran} W=L^2(\RZ)\cap \ker( \bar \p+\frac\pi 2 z),\\
W^*W&=\Id_{L^2(\R^n)}\quad\text{\small (reconstruction formula
$u(x)=\int_{\RZ}(Wu)(Y) \varphi_{Y}(x)dY$),}      \
\\
WW^*&=\Pi_{0},\quad\text{\small            ($W$ is an isomorphism from  $L^2(\R^n)$
onto  $\Lambda_{0}$).}      \
\end{align}
\end{pro}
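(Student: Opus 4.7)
The plan is to assemble the proposition from five computational ingredients, addressed in the following order: the pointwise identity $\poscal{\varphi_Y}{\varphi_X}_{L^2(\R^n)}=\Pi(X,Y)$; the isometry $W^*W=\Id_{L^2(\R^n)}$; the rewriting of $Wv$ as an entire function times a Gaussian; the characterization $\Lambda_0=L^2(\RZ)\cap\ker(\bar\p+\frac\pi2 z)$; and the Fock-space reproducing identity.

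First I would compute $\poscal{\varphi_Y}{\varphi_X}_{L^2(\R^n)}=\int\varphi_Y(x)\overline{\varphi_X(x)}\,dx$ by completing the square in $x$: with $X=(y,\eta)$ and $Y=(y',\eta')$, the product of the two Gaussians gives $2^{n/2}e^{-\pi[(x-y)^2+(x-y')^2]}e^{2i\pi x(\eta'-\eta)}e^{-i\pi(y'\eta'-y\eta)}$, and the $x$-integral (a standard $1$-dimensional Gaussian Fourier integral) produces $2^{-n/2}e^{-\pi(\eta-\eta')^2/2}e^{i\pi(y+y')(\eta'-\eta)}$. Collecting phases yields $e^{i\pi(y\eta'-y'\eta)}=e^{-i\pi[X,Y]}$, so the whole integral equals $e^{-\pi|X-Y|^2/2}e^{-i\pi[X,Y]}=\Pi(X,Y)$; in particular $\norm{\varphi_Y}_{L^2(\R^n)}=1$. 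A Fubini computation then shows $(WW^*u)(X)=\int u(Y)\poscal{\varphi_Y}{\varphi_X}\,dY$, so the integral kernel of $WW^*$ is exactly $\Pi$.

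Next I establish $W^*W=\Id$ via the isometry property. Writing $(Wv)(y,\eta)=2^{n/4}e^{i\pi y\eta}\widehat{T_yv}(\eta)$ with $T_yv(x)=e^{-\pi(x-y)^2}v(x)$, Plancherel in $\eta$ gives $\int|(Wv)(y,\eta)|^2\,d\eta=2^{n/2}\int e^{-2\pi(x-y)^2}|v(x)|^2\,dx$, and then $\int e^{-2\pi(x-y)^2}\,dy=2^{-n/2}$ yields $\norm{Wv}_{L^2(\RZ)}=\norm{v}_{L^2(\R^n)}$. Polarization gives $W^*W=\Id$, hence the reconstruction formula, the closedness of $\range W$, and the fact that $WW^*$ is a self-adjoint idempotent ($(WW^*)^2=W(W^*W)W^*=WW^*$) equal to the orthogonal projection onto $\range W$. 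Separately, completing the square in $(Wv)(y,\eta)$ with $z=\eta+iy$, using $2\pi xy-2i\pi x\eta=-2i\pi xz$ and $-\frac\pi2 y^2+\frac\pi2\eta^2+i\pi y\eta=\frac\pi2 z^2$, produces
\[
(Wv)(y,\eta)=f(z)\,e^{-\frac\pi2|z|^2},\qquad f(z)=2^{n/4}e^{\frac\pi2 z^2}\!\int e^{-\pi x^2-2i\pi xz}v(x)\,dx,
\]
which is manifestly entire, so $\range W\subset\Lambda_0$. The equality $\Lambda_0=L^2\cap\ker(\bar\p+\frac\pi2 z)$ follows from $\bar\p(ue^{\frac\pi2|z|^2})=e^{\frac\pi2|z|^2}(\bar\p+\frac\pi2 z)u$ (distributionally) combined with hypoellipticity of $\bar\p$.

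For the reverse inclusion $\Lambda_0\subset\range W$, it suffices to prove the reproducing identity $\int\Pi(X,Y)u(Y)\,dY=u(X)$ for every $u=f(z)e^{-\frac\pi2|z|^2}\in\Lambda_0$, since that reads $u=WW^*u\in\range W$. The algebraic simplification
\[
-\tfrac\pi2|X-Y|^2-i\pi[X,Y]-\tfrac\pi2|\zeta|^2=-\tfrac\pi2|z|^2-\pi|\zeta|^2+\pi z\bar\zeta,\qquad \zeta=\eta'+iy',
\]
reduces this to the classical Fock reproducing formula $\int f(\zeta)e^{\pi z\bar\zeta-\pi|\zeta|^2}\,dY=f(z)$, which I verify first on the orthonormal monomial basis by polar coordinates (using $\int_0^\infty r^{2n+1}e^{-\pi r^2}\,dr=n!/(2\pi^{n+1})$) and then extend to all Fock-class $f$ by density. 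The step I expect to require most care is this extension, since one must justify exchanging the monomial sum with the integral against $\Pi$; the Gaussian bound $|\Pi(X,Y)|=e^{-\pi|X-Y|^2/2}$ together with Cauchy--Schwarz makes the kernel act boundedly from $\Lambda_0$ to itself and legitimizes the passage to the limit. Once this reproducing identity is secured, $\Lambda_0=\range W$ is a closed subspace of $L^2(\RZ)$ canonically identified with $L^2(\R^n)$ via the isometry $W$; it is proper (e.g.\ no nonzero compactly supported function is entire); and the orthogonal projection onto it, $WW^*$, has kernel $\Pi$, i.e.\ equals $\Pi_0$.
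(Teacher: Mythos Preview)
Your proof is correct and follows the paper's argument closely in its overall architecture: the isometry of $W$ via Plancherel (the paper phrases this as a partial Fourier transform), the identification of the kernel of $WW^*$, and the inclusion $\range W\subset\Lambda_0$ by completing the square are essentially the same computations as in the paper (which leaves the kernel computation to the reader, while you spell it out).

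The one genuine divergence is in the reverse inclusion $\Lambda_0\subset\range W$. The paper proves $WW^*\Phi=\Phi$ for $\Phi=f(z)e^{-\frac\pi2|z|^2}\in\Lambda_0$ by rewriting the reproducing integral as a pairing against $\prod_j\bar\p_{\bar\zeta_j}\bigl(\frac{1}{\pi(\zeta_j-z_j)}\bigr)$ and invoking the distributional identity $\bar\p(1/\pi\zeta)=\delta$; this works in one stroke for every entire $f$ with $\Phi\in L^2$, without any approximation. You instead verify the Fock reproducing identity $\int f(\zeta)e^{\pi z\bar\zeta-\pi|\zeta|^2}\,dY=f(z)$ first on monomials and then extend by density, using the $L^2$-boundedness of $\Pi_0$ (via $|\Pi(X,Y)|=e^{-\pi|X-Y|^2/2}\in L^1$ and Young/Schur). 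Your route is more elementary in that it avoids the $\bar\p$-fundamental solution, at the cost of the density step; the paper's route is a single distributional integration by parts but presupposes familiarity with $\bar\p^{-1}$. Either argument is perfectly adequate here.
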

\begin{proof}
These statements are  classical
(see e.g. \cite{MR1957713}) ; however, since we shall need some extension of that proposition, it is useful  to examine the proof.
We note that
$e^{-i\pi y\eta}(Wv)(y,\eta)$ is the partial Fourier transform w.r.t. $x$
of $$\R^n\times \R^n \ni(x,y)\mapsto v(x)
2^{n/4}e^{-\pi(x-y)^2},$$
whose $L^2(\RZ)$-norm is $\norm{v}_{L^2(\R^n)}$
so that
$W$
is isometric from $L^2(\R^n)$ into $L^2(\RZ)$,
thus with a closed range.
As a result, we have $W^*W=\Id_{L^2(\R^n)}$,
$WW^*$ is selfadjoint and such that
$WW^*WW^*=WW^*$: $WW^*$ is indeed the orthogonal projection
on $\range W$
($\range WW^*\subset\range W$ and $Wu=WW^*Wu$).
The straightforward computation
of the kernel of $WW^*$ is left to the reader.
Let us prove that $\Lambda_{0}=\range W$
is indeed defined by \eqref{4.lllpi}.
For  $v\in L^2(\R^n)$, we have
\begin{multline}\label{4.isoww}
(Wv)(y,\eta)=\int_{\R^n} v(x)
2^{n/4}e^{-\pi(x-y)^2}e^{-2i\pi(x-\frac y2)\eta}dx
\\
=
\int_{\R^n} v(x)
2^{n/4}e^{-\pi(x-y+i\eta)^2}
dx
e^{-\frac\pi 2 (y^2+\eta^2)}
e^{-\frac\pi 2 (\eta+iy)^2}
\end{multline}
and we see that $Wv\in L^2(\RZ)\cap \ker( \bar \p+\frac\pi 2 z)$.
Conversely,
if $\Phi\in L^2(\RZ)\cap \ker( \bar \p+\frac\pi 2 z)$,
we have $\Phi(x,\xi)=e^{-\frac \pi 2(x^2+\xi^2)} f(\xi+ix)$
with $\Phi\in L^2(\RZ)$ and $f$ entire. This gives
\begin{align*}
(WW^*\Phi)&(x,\xi)=\iint  e^{-\frac \pi 2\bigl((\xi-\eta)^2+(x-y)^2+2i \xi y-2i \eta x\bigr)}
\Phi(y,\eta)dy d\eta
\\
&=e^{-\frac \pi 2(\xi^2+x^2)}\iint  e^{-\frac \pi 2(\eta^2-2\xi \eta+y^2-2x y+2i \xi y-2i \eta x)}
\Phi(y,\eta)dy d\eta
\\
&=e^{-\frac \pi 2(\xi^2+x^2)}\iint  e^{-\frac \pi 2
\bigl(\eta^2+y^2+2iy (\xi +ix)-2 \eta (\xi+i x)\bigr)}
\Phi(y,\eta)dy d\eta
\\
&=e^{-\frac \pi 2(\xi^2+x^2)}\iint  e^{-\pi(y^2+\eta^2)}e^{\pi(\eta-i y)
(\xi +ix)}
f(\eta +iy)dy d\eta
\\
&=e^{-\frac \pi 2\val z^2}\iint
e^{-\pi\val \zeta^2} e^{\pi \bar \zeta z} f(\zeta) dy d\eta\quad (\zeta=\eta+iy,\ z=\xi+ix)
\\
&=e^{-\frac \pi 2\val z^2}\iint{{f(\zeta) }\prod_{1\le j\le n}\frac{1}{\pi (z_{j}-\zeta_{j})}\frac{\p }{\p \bar \zeta_{j}}\Bigl(
e^{-\pi\val \zeta^2} e^{\pi \bar \zeta z}\Bigr)} dy d\eta
\\
&=e^{-\frac \pi 2\val z^2}\poscal {f(\zeta)\prod_{1\le j\le n}\frac{\p }{\p \bar \zeta_{j}}\Bigl(
\frac{1}{\pi (\zeta_{j}-z_{j})} \Bigr)}
{
e^{-\pi\val \zeta^2} e^{\pi \bar \zeta z}}_{\mathscr S'(\RZ), \mathscr S(\RZ)}
\\&=e^{-\frac \pi 2\val z^2} f(z),
\end{align*}
since $f$ is entire.
This implies  $WW^* \Phi=\Phi$
and $\Phi\in \range W$.
The proof of the proposition is complete.
\end{proof}
\begin{pro}\label{4.pro.focksprime}
Defining
\begin{equation}\label{4.focksprime}
\mathscr K=\ker (\bar\p+\frac{\pi}{2} z)\cap \mathscr S'(\RZ),
\end{equation}
the operator $W$
given by \eqref{4.doubleu}
can be extended as a continuous mapping
from
$\mathscr S'(\R^n)$ onto $\mathscr K$
(the $L^2(\R^n)$
dot-product is replaced by a bracket of (anti)duality).
The operator $\widetilde{\Pi}$ defined by its kernel
$\Pi$ given by
\eqref{4.project}
defines a continuous mapping from $\mathscr S(\RZ)$
into itself
and can be extended as a continuous mapping
 from $\mathscr S'(\RZ)$
onto $\mathscr K$.
It verifies
\begin{equation}\label{4.fockagain}
\widetilde{\Pi}^2=\widetilde{\Pi},\quad{\widetilde{\Pi}}_{\vert \mathscr K}=\Id_{\mathscr K}.
\end{equation}
\end{pro}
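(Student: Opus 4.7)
The plan is to follow the kernel-oriented strategy of Proposition~\ref{4.pro.pizer0}, interpreting every pairing in the tempered sense. The single fact making everything work is that both $\varphi_Y$ and the family $Y\mapsto \Pi(\cdot,Y)$ are Gaussian-type elements of $\mathscr S$, with seminorms polynomially bounded in the parameter. More precisely, the map $Y=(y,\eta)\mapsto \varphi_Y$ from $\R^{2n}$ into $\mathscr S(\R^n)$ is $\moo$, and every Schwartz seminorm of $\p_Y^\alpha \varphi_Y$ is dominated by a polynomial in $Y$; hence for $v\in \mathscr S'(\R^n)$ the function $(Wv)(Y):=\langle v,\varphi_Y\rangle$ is $\moo$ in $Y$ with all derivatives of polynomial growth, i.e.\ $Wv\in \mathscr S'(\R^{2n})$, and $W\colon \mathscr S'\to \mathscr S'$ is continuous. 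To check that $Wv\in \mathscr K$, I would extend \eqref{4.isoww} distributionally: $x\mapsto 2^{n/4}e^{-\pi(x+i\zeta)^2}$ is in $\mathscr S(\R^n)$, depends holomorphically on $\zeta\in \C^n$, and has seminorms polynomially bounded in $\zeta$, so $F(\zeta):=\langle v, 2^{n/4}e^{-\pi(\cdot+i\zeta)^2}\rangle$ is entire, and $(Wv)(y,\eta)=F(\zeta)\,e^{-\pi|\zeta|^2/2}\,e^{-\pi\zeta^2/2}=f(\zeta)\,e^{-\pi|\zeta|^2/2}$ with $f := F\cdot e^{-\pi\zeta^2/2}$ entire; direct differentiation yields $(\bar\p + \pi\zeta/2)Wv=0$.

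\smallskip
For $\widetilde\Pi$ on $\mathscr S(\R^{2n})$, continuity follows from the $L^2$ factorisation $\widetilde\Pi=WW^*$ of Proposition~\ref{4.pro.pizer0} combined with continuity of $W$ and $W^*$ between the corresponding Schwartz spaces (read off from the integral formulas). A short computation from \eqref{4.project} gives the alternative expression
\[
\Pi(X,Y)=e^{-\pi|z|^2/2}\,e^{\pi z\bar\zeta}\,e^{-\pi|\zeta|^2/2},\qquad z=\xi+ix,\ \ \zeta=\eta+iy,
\]
from which $Y\mapsto \Pi(X,Y)$ is Schwartz for each $X$ with seminorms polynomial in $X$, and $(\p_{\bar z} + \pi z/2)\Pi(X,Y)=0$. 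The first property allows us to define, for $u\in \mathscr S'$, the distribution $(\widetilde\Pi u)(X):=\langle u_Y,\Pi(X,Y)\rangle$, which is $\moo$ in $X$ with polynomial growth and extends $\widetilde\Pi$ continuously to $\mathscr S'$; the second, by differentiation under the pairing, forces $\widetilde\Pi u\in \mathscr K$.

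\smallskip
The identity $\widetilde\Pi^2=\widetilde\Pi$ on $\mathscr S$ is inherited from Proposition~\ref{4.pro.pizer0} (since $\mathscr S\subset L^2$) and extends to $\mathscr S'$ by the duality just installed. For $\widetilde\Pi|_{\mathscr K}=\mathrm{Id}$, given $\Phi=f(\zeta)e^{-\pi|\zeta|^2/2}\in\mathscr K$ with $f$ entire of polynomial growth, I would evaluate
\[
(\widetilde\Pi\Phi)(X) = e^{-\pi|z|^2/2}\,\bigl\langle f(\zeta),\,e^{\pi z\bar\zeta}\,e^{-\pi|\zeta|^2}\bigr\rangle_\zeta
\]
and run the last chain of identities in the proof of Proposition~\ref{4.pro.pizer0}: write $e^{\pi z\bar\zeta}e^{-\pi|\zeta|^2}$ as $\prod_j (\pi(z_j-\zeta_j))^{-1}\,\p_{\bar\zeta_j}[e^{\pi z\bar\zeta}e^{-\pi|\zeta|^2}]$, integrate each $\p_{\bar\zeta_j}$ by parts onto the Cauchy factor (legal since $\p_{\bar\zeta_j}f=0$ distributionally and $f$ has polynomial growth), and invoke the tempered identity $\p_{\bar\zeta_j}[(\pi(\zeta_j-z_j))^{-1}]=\delta_{\zeta_j=z_j}$. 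The pairing collapses to $f(z)$, so $\widetilde\Pi\Phi=\Phi$. Surjectivity onto $\mathscr K$ is then automatic: directly for $\widetilde\Pi$, and for $W$ via $\widetilde\Pi=WW^*$, whence $\mathscr K=\widetilde\Pi(\mathscr K)\subset W(\mathscr S')\subset \mathscr K$. The main obstacle is precisely that final distributional integration by parts: the Cauchy factors $(\zeta_j-z_j)^{-1}$ are locally non-integrable with Dirac-mass $\p_{\bar\zeta_j}$-derivatives, so the whole computation must be carried out in the tempered category, and the polynomial growth of $f$ is the decisive hypothesis making this legal.
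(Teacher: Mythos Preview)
Your overall strategy matches the paper's---redo the computation of Proposition~\ref{4.pro.pizer0} with duality brackets in place of integrals---and you are considerably more explicit than the paper, which for the converse direction writes only ``the same calculations as above give \eqref{4.fockagain}''. One small slip (harmless): the Schwartz seminorms of $x\mapsto e^{-\pi(x+i\zeta)^2}$ are \emph{not} polynomially bounded in $\zeta$ (already the sup-norm is $e^{\pi(\operatorname{Re}\zeta)^2}$), but this does not affect the holomorphy of $F$, which is a local matter.

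There is, however, a genuine gap in your verification of $\widetilde\Pi|_{\mathscr K}=\mathrm{Id}$. You restrict to $\Phi=f\,e^{-\pi|\cdot|^2/2}$ with $f$ entire \emph{of polynomial growth}, and you are right that this hypothesis is what legitimises the Cauchy-kernel integration by parts. But not every $\Phi\in\mathscr K$ satisfies it: with $n=1$ and $z=x_1+ix_2$, take $f(z)=e^{\pi z^2/2}$; then $\Phi(x_1,x_2)=e^{-\pi x_2^2+i\pi x_1x_2}$ is bounded (hence tempered), lies in $\mathscr K\setminus L^2$, yet $|f|$ grows like $e^{\pi x_1^2/2}$ along the real axis. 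This is exactly the function $u_0$ of \eqref{eq:lim}, so the case is central rather than pathological. Since you then deduce surjectivity of $W$ from $\widetilde\Pi|_{\mathscr K}=\mathrm{Id}$, that conclusion is also left open. The paper's partial-Fourier description of $W$ suggests a repair that bypasses $f$ altogether: given $\Phi\in\mathscr K$, set $H=e^{-i\pi y\eta}\Phi\in\mathscr S'$ and $G=\mathscr F_{\eta\to x}^{-1}H\in\mathscr S'(\R^{2n}_{y,x})$; the equation $(\bar\p+\tfrac\pi2\zeta)\Phi=0$ translates into $\p_yG+2\pi(y-x)G=0$, whose tempered solutions are exactly $G(y,x)=2^{n/4}v(x)\,e^{-\pi(x-y)^2}$ for some $v\in\mathscr S'(\R^n)$, i.e.\ $\Phi=Wv$. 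Surjectivity of $W$ thus follows directly, and $\widetilde\Pi|_{\mathscr K}=\mathrm{Id}$ then comes from $\widetilde\Pi=WW^*$ and $W^*W=\mathrm{Id}$, both of which extend to $\mathscr S'$ by duality.
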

\begin{proof}
As above we use that
$e^{-i\pi y\eta}(Wv)(y,\eta)$ is the partial Fourier transform w.r.t. $x$
of the tempered distribution on $\RZ_{x,y}$
$$ v(x)
2^{n/4}e^{-\pi(x-y)^2}.$$
Since $e^{\pm i\pi y\eta}$ are in the space $\mathscr O_{M}(\RZ)$
of multipliers
of $\mathscr S(\RZ)$,
that transformation
is continuous and injective from
$\mathscr S'(\R^n)$ into
$\mathscr S'(\RZ)$.
Replacing in \eqref{4.isoww}
the integrals by brackets of duality,
we see that
$W(\mathscr S'(\R^n))\subset\mathscr K$.
Conversely, if $\Phi\in \mathscr K$,
the same calculations as above
give \eqref{4.fockagain} and \eqref{4.focksprime}.
\end{proof}
For a Hamiltonian $a$ defined on $\RZ$,
for instance a bounded function on $\R^{2n}$,
we define
$
a^{\text{Wick}}=W^* a W:
$
$$
\begin{CD}
L^2(\RZ) @>a >{\text{             (multiplication by $a$)}}> L^2(\RZ)\\
@AWAA @VVW^*V\\
L^2(\R^n) @>>a^{\text{Wick}}>      L^2(\R^n)
\end{CD}
$$
we note that
$
a(x,\xi)\ge 0\Longrightarrow  a^{\text{Wick}}=W^* a W\ge 0$,
as an operator.
There are many useful applications of the Wick quantization
due to that non-negativity property,
but for our purpose here,
it will be more important to relate  explicitely that quantization
to   the usual Weyl quantization (as given by \eqref{3.weylf}) for quadratic forms.
\begin{lem}\label{lm:b_inf_y2}
Let $q(X)=\poscal{QX}{X}$ be a quadratic form
on $\RZ$ ($Q$ is a $2n\times 2n$ symmetric matrix).
Then we have
\begin{equation}\label{wiwequ}
q^{\text{Wick}}= q^w+\frac{1}{4\pi}\trace Q.
\end{equation}
Let $L(y,\eta)=\tau\cdot y- t\cdot \eta$ be a real linear form on $\RZ$; then, for all $\Phi\in \Lambda_{0}$, we have
\begin{equation}\label{unprll}
\iint {L(y,\eta)}^2\val{\Phi(y,\eta)}^2 dy d\eta\ge \frac{\val{\tau}^2+\val t^2}{4\pi }\norm{\Phi}_{L^2(\RZ)}^2.
\end{equation}
\end{lem}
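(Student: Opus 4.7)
The plan is to treat the two assertions in sequence, deducing \eqref{unprll} from \eqref{wiwequ} by specializing the latter to $L^2$. The central pivot is the classical identity expressing the Weyl symbol of a Wick-quantized Hamiltonian as a Gaussian convolution: the operator $a^{\text{Wick}} = W^* a W$ has Weyl symbol $a \ast G$ with $G(X) = 2^n e^{-2\pi|X|^2}$. This comes from the decomposition $a^{\text{Wick}} = \int_{\RZ} a(Y)\, \varphi_Y \otimes \bar\varphi_Y\, dY$, which realizes $a^{\text{Wick}}$ as a superposition of the coherent-state projectors $\varphi_Y \otimes \bar\varphi_Y$, whose Weyl symbol is the Gaussian $2^n e^{-2\pi|X-Y|^2}$.

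For \eqref{wiwequ}, I would apply this convolution formula to the quadratic symbol $q(X) = \poscal{QX}{X}$. Expanding $q(X+Z) = q(X) + 2\poscal{QX}{Z} + q(Z)$ and integrating against $G(Z)$, the linear term in $Z$ vanishes by parity of $G$, and the $Z$-quadratic contribution reduces to $\int_{\RZ} q(Z)\, G(Z)\, dZ = \trace(Q)/(4\pi)$, since the Gaussian $G$ has covariance $(4\pi)^{-1} I_{2n}$ with uncorrelated components. This yields the Weyl symbol $q(X) + \trace(Q)/(4\pi)$, hence the operator identity \eqref{wiwequ}.

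For \eqref{unprll}, I would apply \eqref{wiwequ} to the quadratic form $L^2$, whose associated symmetric matrix is of rank one with trace $|\tau|^2 + |t|^2$, so
\begin{equation*}
(L^2)^{\text{Wick}} = (L^2)^w + \frac{|\tau|^2+|t|^2}{4\pi}\Id = (L^w)^2 + \frac{|\tau|^2+|t|^2}{4\pi}\Id,
\end{equation*}
the last equality because $L$ is linear. Since $L^w$ is self-adjoint on $L^2(\R^n)$ (as $L$ is real-valued), $(L^w)^2 \geq 0$ and consequently $(L^2)^{\text{Wick}} \geq \frac{|\tau|^2+|t|^2}{4\pi}\Id$. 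To transport this inequality back to $\Lambda_{0}$ I use Proposition \ref{4.pro.pizer0}: any $\Phi \in \Lambda_{0}$ can be written $\Phi = Wu$ for a unique $u \in L^2(\R^n)$ with $\norm{\Phi}_{L^2(\RZ)} = \norm{u}_{L^2(\R^n)}$, and
\begin{equation*}
\iint L(y,\eta)^2\, |\Phi(y,\eta)|^2\, dy\, d\eta = \poscal{L^2 Wu}{Wu}_{L^2(\RZ)} = \poscal{(L^2)^{\text{Wick}} u}{u}_{L^2(\R^n)},
\end{equation*}
from which \eqref{unprll} follows immediately.

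The only delicate step I anticipate is verifying the Weyl symbol of the coherent-state projector $\varphi_Y \otimes \bar\varphi_Y$: one must unwind the $2^{n/4}$ normalization of $\varphi_{y,\eta}$ together with the convention $D_j = \p_j/(2i\pi)$ used in the paper's Weyl quantization, and then check that the moment integral really gives the factor $1/(4\pi)$. Everything else is a Gaussian moment computation plus the isometry property of $W$ already established in Proposition \ref{4.pro.pizer0}.
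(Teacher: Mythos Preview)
Your proposal is correct and follows essentially the same route as the paper: the paper likewise invokes the identity $q^{\text{Wick}}=(q\ast\Gamma)^w$ with $\Gamma(X)=2^n e^{-2\pi|X|^2}$, computes the Gaussian moment $\int 2^{1/2}t^2 e^{-2\pi t^2}\,dt=1/(4\pi)$ to obtain \eqref{wiwequ}, and then writes $\Phi=Wu$, uses $(L^2)^w=(L^w)^2$ and the self-adjointness of $L^w$ to deduce \eqref{unprll}. The only cosmetic difference is that the paper records the exact identity $\norm{L\Phi}^2=\norm{L^w u}^2+\frac{|\tau|^2+|t|^2}{4\pi}\norm{\Phi}^2$ before dropping the first term, whereas you phrase this directly as the operator inequality $(L^w)^2\ge 0$.
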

\begin{proof}
A straightforward computation
shows that
\begin{equation}
\w{q}=(q\ast \Gamma)^w,
\quad\text{where $\Gamma(X)=2^n e^{-2\pi\val X^2}$
($X\in\RZ$).}
\end{equation}
By Taylor's formula, we have
$
(q\ast \Gamma)(X)= q(X)+\int_{\RZ}2^ne^{-2\pi\val{Y}^2}\poscal{QY}{Y} dY,
$
we can use
the formula
$
\int_{\R}2^{1/2} t^2 e^{-2\pi t^2} dt=\frac{1}{4\pi}$
to get the first result.
For $\Phi\in \Lambda_{0}$, we have $\Phi=Wu$ with $u\in L^2(\R^n)$
and thus
\begin{align*}
\norm{L\Phi }_{L^2(\RZ)}^2&=\poscal{L^2 Wu}{Wu}_{L^2(\RZ)}=
\poscal{W^*L^2 Wu}{u}_{L^2(\R^n)}
\\&=
\poscal{\w{{(L^2)}}u}{u}_{L^2(\R^n)}=\poscal{{{(L^2)^w}}u}{u}_{L^2(\R^n)}+
\frac {\trace(L^2)}{4\pi}\norm{u}^2_{L^2(\R^n)},
\end{align*}
and since $L^wL^w=(L^2)^w$ for a linear form, we get since $L$ is real-valued,
$$\norm{L\Phi }_{L^2(\RZ)}^2
=\norm{L^w u}^2_{{L^2(\R^n)}}+
\frac {\val{\tau}^2+\val t^2}{4\pi}\norm{\Phi}^2_{L^2(\RZ)},$$
which implies \eqref{unprll}.
\end{proof}
\begin{nb}{\rm\small
The inequality \eqref{unprll}
looks like an uncertainty principle related  to the localization in
$\RZ$
for the functions of
$\Lambda_{0}$.
Moreover the equality
\eqref{wiwequ}
provides a simple way to saturate approximately
the inequality \eqref{unprll};
for instance if $L(y,\eta)=y_{1}$,
we consider the sequence
$\Phi_{\ep}=W u_{\ep}$
with
$
u_{\ep}(x)=\varphi(x_{1}/\ep)\ep^{-1/2}\psi(x'),\quad\norm{\varphi}_{L^2(\R)}
=\norm{\psi}_{L^2(\R^{n-1})}=1,
$
and we get, provided $x\varphi(x)\in L^2(\R)$,
$$
\iint y_{1}^2\val{\Phi_{\ep}(y,\eta)}^2 dy d\eta=
\int_{\R} x_{1}^2\val{\varphi(x_{1}/\ep)}^2\ep^{-1}dx_{1}+\frac{1}{4\pi}=O(\ep^2)+\frac{1}{4\pi}.
$$
}\end{nb}
\subsection{The anisotropic $LLL$}
Going back to the Gross-Pitaevskii energy \eqref{1.engrpi},
with $q$ given by \eqref{1.fetter1},
we see, using the theorem \ref{3.thm.algwork} and \eqref{diagopnew} that,
with $u=Mv$,
\begin{align*}
2E_{GP}(u)&=\poscal{q^w u}{u}_{L^2(\R^2)}
+g\int \val{u}^4dx\\
&=
\poscal{M^*q^w Mv}{v}_{L^2(\R^2)}+g\int \val{(Mv)(x)}^4dx
\\&=
\poscal{(D_{y_{1}}^2+\mu_{1}^2 y_{1}^2+D_{y_{2}}^2
+\mu_{2}^2 y_{2}^2)v}{v}_{L^2(\R^2)}
+g\int \val{(Mv)(x)}^4dx
\\&=
\bigl\langle
{\bigl((\lambda_{1}cd-d\lambda_{2}^{-1})x_{2}+\lambda_{1}D_{x_{1}}\bigr)^2u+
\mu_{1}^2\bigl(\lambda_{2}d^{-1}D_{x_{2}}+c\lambda_{2}{x_{1}}\bigr)^2u},
{u}\bigr\rangle\\
&\hs +\bigl\langle
{\bigl((\lambda_{2}cd-d\lambda_{1}^{-1})x_{1}+\lambda_{2}D_{x_{2}}\bigr)^2 u},{u}\bigr\rangle+
\bigl\langle
{\mu_{2}^2\bigl(\lambda_{1}d^{-1}D_{x_{1}}+c\lambda_{1}{x_{2}}\bigr)^2u},{u}\bigr\rangle
\\
&\hs \hs+g\int \val{u}^4dx.
\end{align*}
The question at hand is the determination of
$\inf_{\norm u_{L^2}=1}E_{GP}(u)$, which is equal to
$
\inf_{\norm v_{L^2}=1}E_{GP}(Mv).
$
Since $\mu_{1}=0$ at $\ep=0$ (see \eqref{2.partwo}) and $\mu_{2}\in [1,4]$
(see \eqref{12.rem.obvious}),
it is natural to modify our minimization problem,
and in the $(y,\eta)$ coordinates,
to restrict our attention to the {\it Lowest Landau Level},
i.e. the groundspace of $D_{y_{2}}^2+\mu_{2}^2 y_{2}^2$,
that is  the subspace of $L^2(\R^2)$
\begin{equation}
LLL_{y}=\{v_{1}(y_{1})\otimes 2^{1/4}\mu_{2}^{1/4}e^{-\pi \mu_{2}y_{2}^2}\}_{v_{1}\in L^2(\R)}
=\ker (D_{y_{2}}-i \mu_{2}y_{2})\cap L^2(\R^2).
\end{equation}
If we want to stay in the physical coordinates $(x,\xi)$ we reach the following definition,
obtained by  using Segal's formula \eqref{3.segalf}
with $M, \chi$ given in the lemma \ref{3.lem.metapl}
so that
$$LLL_{x}=M(LLL_{y}).$$
\begin{pro}\label{4.pro.LLL}
 Let $q$ be the quadratic form on $\R^4$ given by \eqref{1.fetter1}.
We define the $LLL$ as
\begin{align}
LLL&=(\ker \mathcal L)\cap L^2(\R^2),\qquad{\text{with}}
\\
\mathcal L=
(\lambda_{2}cd-d\lambda_{1}^{-1})x_{1}&+\lambda_{2}D_{x_{2}}
-i
\mu_{2}\lambda_{1}d^{-1}D_{x_{1}}-i\mu_{2}c\lambda_{1}{x_{2}}=\eta_{2}^w-i\mu_{2}y_{2}^w.
\end{align}
The $LLL$
is the subspace of $L^2(\R^2)$ of functions of type
{\small \begin{equation}\label{4.df.LLL}
F\bigl(x_{1}+i\beta_{2}x_{2}\bigr)
\exp{\left(-\frac{\gamma\pi}{4\beta_{2}}\Bigl[
x_{1}^2(1-\frac{\nu^2}{2\alpha})+
(\beta_{2}x_{2})^2(1+\frac{\nu^2}{2\alpha})\Bigr]
\right)}
\exp{(-i\frac{\pi\nu^2\gamma }{4\alpha }x_{1}x_{2})},
\end{equation}}
where $F$ is entire on $\C$, and the parameters $\gamma,\beta_{2},\nu, \alpha$
are given in the section $\ref{notations}$.
The real part of the phase of  the Gaussian function multiplying $F\bigl(x_{1}+i\beta_{2}x_{2}\bigr)$ is a negative definite quadratic form when $(\omega,\nu)\not=(0,0)$.\end{pro}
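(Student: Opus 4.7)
\textbf{Proof proposal for Proposition \ref{4.pro.LLL}.}

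The plan is to first identify $LLL$ as the $L^2$-kernel of $\mathcal{L}$ via Segal's formula, then integrate the first-order equation $\mathcal{L} u=0$ explicitly by reducing it to a $\bar\partial$-equation in the complex variable $z=x_{1}+i\beta_{2}x_{2}$, and finally check positivity of the real Gaussian exponent.

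\emph{Step 1.} I would apply Segal's formula \eqref{3.segalf} to the linear Weyl symbol $a=\eta_{2}-i\mu_{2}y_{2}$, where $\eta_{2}$ and $y_{2}$ are the real-linear combinations of $(x_{1},x_{2},\xi_{1},\xi_{2})$ given by \eqref{2.lastcomp}. Those expressions are by construction the coordinate functions on $\R^4_{y,\eta}$ composed with $\chi^{-1}$, so $a\circ\chi(y,\eta)=\eta_{2}-i\mu_{2}y_{2}$ now interpreted as coordinate functions, whose Weyl quantization is the simple operator $D_{y_{2}}-i\mu_{2}y_{2}$. Segal's formula then gives $\mathcal{L}=a^{w}=M(D_{y_{2}}-i\mu_{2}y_{2})M^{*}$, and unitarity of $M$ yields $\ker\mathcal{L}\cap L^{2}(\R^{2})=M\bigl(\ker(D_{y_{2}}-i\mu_{2}y_{2})\cap L^{2}\bigr)=M(LLL_{y})=LLL$.

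\emph{Step 2.} Expanding $D_{j}=(2i\pi)^{-1}\partial_{j}$ in the definition of $\mathcal{L}$ and using the algebraic identity $\lambda_{2}d/(\mu_{2}\lambda_{1})=1/\beta_{2}$ (which follows from $d=\gamma\lambda_{1}\lambda_{2}/2$, $\gamma=2\alpha/\omega$, the formula for $\lambda_{2}^{2}$ and the definition of $\beta_{2}$), the operator takes the form
$$
\mathcal{L}=-\frac{\mu_{2}\lambda_{1}}{\pi d}\,\partial_{\bar z}+L_{0}(x_{1},x_{2}),\qquad z=x_{1}+i\beta_{2}x_{2},
$$
where $L_{0}$ is an explicit complex linear form in $(x_{1},x_{2})$. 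The substitution $u=e^{-\Phi}F(z)$ with $F$ entire in $z$ turns $\mathcal{L}u=0$ into a Cauchy--Riemann equation $\partial_{\bar z}\Phi=-\pi d(\mu_{2}\lambda_{1})^{-1}L_{0}$ whose right-hand side is linear in $(x_{1},x_{2})$; hence there is a unique quadratic polynomial solution $\Phi=Ax_{1}^{2}+Bx_{2}^{2}+Cx_{1}x_{2}$, modulo a holomorphic summand that can be absorbed into $F$. Re-expressing $L_{0}$ in $(z,\bar z)$ and matching monomials should reproduce exactly the real Gaussian envelope $\exp\bigl(-\gamma\pi(4\beta_{2})^{-1}\bigl[x_{1}^{2}(1-\nu^{2}/(2\alpha))+(\beta_{2}x_{2})^{2}(1+\nu^{2}/(2\alpha))\bigr]\bigr)$ and the pure phase $\exp(-i\pi\nu^{2}\gamma x_{1}x_{2}/(4\alpha))$ appearing in \eqref{4.df.LLL}. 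The $L^{2}(\R^{2})$-constraint then imposes only the standard growth condition on the entire function $F$, once Step 3 shows the real part of $\Phi$ is coercive.

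\emph{Step 3.} The coefficient $\gamma/(4\beta_{2})$ is strictly positive, since both $\gamma=2\alpha/\omega$ and $\beta_{2}$ are positive whenever $\omega>0$. The factor $1+\nu^{2}/(2\alpha)$ is trivially positive as $\alpha>0$. The factor $1-\nu^{2}/(2\alpha)>0$ is equivalent to $4\alpha^{2}>\nu^{4}$, i.e.\ $4(\nu^{4}+4\omega^{2})>\nu^{4}$, i.e.\ $3\nu^{4}+16\omega^{2}>0$, which holds whenever $(\omega,\nu)\neq(0,0)$. So the real part of the phase in \eqref{4.df.LLL} is a negative-definite quadratic form.

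\emph{Main obstacle.} The delicate part is the bookkeeping in Step 2: the coefficients of the quadratic $\Phi$ produced by coefficient-matching must collapse, via the identities among $\lambda_{j},\mu_{j},c,d,\beta_{j},\gamma$ assembled in Section~2, to the precise factors $1\pm\nu^{2}/(2\alpha)$ and $\nu^{2}\gamma/\alpha$ displayed in \eqref{4.df.LLL}. If the algebra becomes unwieldy, an alternative is to apply the explicit metaplectic formula \eqref{3.metapl} for $M$ directly to $v_{1}(y_{1})\otimes 2^{1/4}\mu_{2}^{1/4}e^{-\pi\mu_{2}y_{2}^{2}}\in LLL_{y}$: the Gaussian integration in $y_{2}$ inside $e^{2i\pi d^{-1}D_{1}D_{2}}$ is in closed form, and after the dilation and phase multiplier one reads off the Gaussian envelope together with the identification of $F$ as the image of $v_{1}$ under a one-variable Bargmann-type transform.
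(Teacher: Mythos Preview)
Your proposal is correct and follows essentially the same strategy as the paper. The only tactical difference is in Step~2: the paper first rescales to auxiliary variables $t_{1}=\mu_{2}^{-1}\lambda_{1}^{-1}d\,x_{1}$, $t_{2}=\lambda_{2}^{-1}x_{2}$ so that the complex variable is the standard $z=t_{1}+it_{2}$, and then factors the operator as $e^{-G}\partial_{\bar z}e^{G}$ with $G=\pi\tfrac{\mu_{2}}{2}z\bar z-\pi\tfrac{\nu^{2}\mu_{2}}{4\alpha}\bar z^{2}$, reading off the Gaussian directly before converting back to $x$-coordinates. You instead work in the stretched variable $z=x_{1}+i\beta_{2}x_{2}$ from the outset and solve the $\bar\partial$-equation for the quadratic exponent $\Phi$ by matching coefficients. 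Both routes require exactly the same parameter identities (in particular $\mu_{2}\lambda_{1}d^{-1}\lambda_{2}^{-1}=\beta_{2}$, which you correctly identify), and both lead to the same bookkeeping burden you flag as the main obstacle. The paper's rescaled-coordinate approach is marginally cleaner because the conjugation form $e^{-G}\partial_{\bar z}e^{G}$ makes the kernel description immediate without an ansatz; your direct approach avoids one change of variables but trades it for an equivalent computation. Your Step~3 and the paper's positivity check are identical.
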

\begin{proof}
We have
\begin{multline*}
i\mathcal L=
\overbrace{\Bigl(
\mu_{2}\lambda_{1}d^{-1}D_{x_{1}}+\mu_{2}c\lambda_{1}{x_{2}}
\Bigr)}^{\mu_{2}y_{2}}
+i\overbrace{
\Bigl(
\lambda_{2}D_{x_{2}}
-(d\lambda_{1}^{-1}-\lambda_{2}cd)x_{1}
\Bigr)}^{\eta_{2}}      \
\\=\frac{1}{2i\pi}
\Bigl(
\mu_{2}\lambda_{1}d^{-1}\p_{1}+i\lambda_{2}\p_{2}
+2i\pi\mu_{2}c\lambda_{1}{x_{2}}
+2\pi (d\lambda_{1}^{-1}-\lambda_{2}cd)x_{1}
\Bigr)      \
\\=
\frac{1}{i\pi}
\Bigl(\frac 12
\mu_{2}\lambda_{1}d^{-1}\p_{1}+i\frac 1 2\lambda_{2}\p_{2}
+i\pi\mu_{2}c\lambda_{1}{x_{2}}
+\pi (d\lambda_{1}^{-1}-\lambda_{2}cd)x_{1}
\Bigr).      \
\end{multline*}
We set
\begin{equation}
t_{1}=\mu_{2}^{-1}\lambda_{1}^{-1} d x_{1},
\quad t_{2}=\lambda_{2}^{-1} x_{2},
\end{equation}
and we get for $z=t_{1}+it_{2}$,
\begin{align*}
&\frac{\p}{\p \bar z}+i\pi\mu_{2}c\lambda_{1}\lambda_{2}t_{2} +\pi
(d\lambda_{1}^{-1}-\lambda_{2}cd)\mu_{2}\lambda_{1}d^{-1}t_{1}
\\&=
\frac{\p}{\p \bar z}+i\pi\mu_{2}c\lambda_{1}\lambda_{2}\frac{z-\bar z}{2i}
+\pi
(d\lambda_{1}^{-1}-\lambda_{2}cd)\mu_{2}\lambda_{1}d^{-1}\frac{z+\bar z}{2}
\\&=
\frac{\p}{\p \bar z}+z\pi\frac{\mu_{2}}{2}
+\bar z \pi \frac{\mu_{2}}2(1-2\lambda_{1}\lambda_{2}c)
=
\frac{\p}{\p \bar z}+z\pi\frac{\mu_{2}}{2}-
\bar z \pi \frac{\mu_{2}}2\nu^2\alpha^{-1}
\\&
=e^{-\pi\frac{\mu_{2}}{2}z\bar z} e^{\pi\frac{\nu^2\mu_{2}}{4\alpha}(\bar z)^2}
\frac{\p}{\p \bar z} e^{\pi\frac{\mu_{2}}{2}z\bar z} e^{-\pi\frac{\nu^2\mu_{2}}{4\alpha}(\bar z)^2}.
\end{align*}
As a consequence,
the $LLL$ is  the subspace of $L^2(\C)$ of functions
$$
f(z) e^{-\pi\frac{\mu_{2}}{2}z\bar z} e^{\pi\frac{\nu^2\mu_{2}}{4\alpha}(\bar z)^2},\quad\text{with $f$ holomorphic.}
$$
We note that the real part of the exponent
is
$$
-\frac{\pi \mu_{2}}{2}(t_{1}^2+t_{2}^2-\frac{\nu^2}{2\alpha}(t_{1}^2-t_{2}^2))
=-\frac{\pi \mu_{2}}{2}\Bigl[t_{1}^2(\frac{2\alpha-\nu^2}{2\alpha})+
t_{2}^2(\frac{2\alpha+\nu^2}{2\alpha})\Bigr]
$$
and that
$$
2\alpha-\nu^2>0\Longleftrightarrow (\omega,\nu)\not=(0,0).
$$
Leaving the $t$-coordinates for the original $x$-coordinates, we get
with $f$ entire,
{\small $$
f(\mu_{2}^{-1}\lambda_{1}^{-1} dx_{1}+i \lambda_{2}^{-1}x_{2})
\exp{\left(-\frac{\pi \mu_{2}}{2}\Bigl[t_{1}^2(\frac{2\alpha-\nu^2}{2\alpha})+
t_{2}^2(\frac{2\alpha+\nu^2}{2\alpha})\Bigr]
\right)}
\exp{(-i\frac{\pi\mu_{2}\nu^2}{2\alpha}t_{1}t_{2})},
$$}
i.e.
{\small$$
f(\mu_{2}^{-1}\lambda_{1}^{-1} dx_{1}+i \lambda_{2}^{-1}x_{2})
\exp{\left(-\frac{\pi \mu_{2}}{2}\Bigl[x_{1}^2d^2(\frac{2\alpha-\nu^2}{2\alpha\lambda_{1}^2\mu_{2}^2})+
x_{2}^2(\frac{2\alpha+\nu^2}{2\alpha\lambda_{2}^2})\Bigr]
\right)}
\exp{(-i\frac{\pi\mu_{2}\nu^2 d}{2\alpha\lambda_{1}\lambda_{2}\mu_{2}}x_{1}x_{2})},
$$}
and since
\begin{align*}
\mu_{2}\lambda_{1}d^{-1}\lambda_{2}^{-1}&=\mu_{2}\lambda_{1}2\gamma^{-1}\lambda_{1}^{-1}\lambda_{2}^{-1}
\lambda_{2}^{-1}=\mu_{2}2\gamma^{-1}\lambda_{2}^{-2}=
\mu_{2}2\gamma^{-1}\gamma\beta_{2}(2\mu_{2})^{-1}=\beta_{2},
\\
2^{-1}\mu_{2}d^2\lambda_{1}^{-2}\mu_{2}^{-2}&
=2^{-1}\mu_{2}\gamma^2 4^{-1}\lambda_{2}^2\mu_{2}^{-2}
=2^{-1}\mu_{2}\gamma^2 4^{-1}2\mu_{2}\gamma^{-1}\beta_{2}^{-1}\mu_{2}^{-2}=
\frac{\gamma}{4\beta_{2}},
\\
2^{-1}\mu_{2}\lambda_{2}^{-2}&=2^{-1}\mu_{2}\frac{\gamma\beta_{2}}{2\mu_{2}}
=\frac{\gamma\beta_{2}}{4},\\
\frac{\pi\mu_{2}\nu^2 d}{2\alpha\lambda_{1}\lambda_{2}\mu_{2}}&=
\frac{\pi\nu^2 d}{2\alpha\lambda_{1}\lambda_{2}}
=\frac{\pi\nu^2\gamma }{2\alpha 2},
\end{align*}
we obtain
\begin{multline*}
f\bigl(\mu_{2}^{-1}\lambda_{1}^{-1} d[x_{1}+i \underbrace{
\mu_{2}\lambda_{1}d^{-1}\lambda_{2}^{-1}}_{=\beta_{2}}x_{2}]\bigr)
\\\times\exp{\left(-\frac{\pi \mu_{2}}{2}\Bigl[x_{1}^2d^2(\frac{2\alpha-\nu^2}{2\alpha\lambda_{1}^2\mu_{2}^2})+
x_{2}^2(\frac{2\alpha+\nu^2}{2\alpha\lambda_{2}^2})\Bigr]
\right)}
\exp{(-i\frac{\pi\mu_{2}\nu^2 d}{2\alpha\lambda_{1}\lambda_{2}\mu_{2}}x_{1}x_{2})},
\end{multline*}
that is, with $F$ entire on $\C$,
{\small \begin{equation}
F\bigl(x_{1}+i\beta_{2}x_{2}\bigr)
\exp{\left(-\frac{\gamma\pi}{4\beta_{2}}\Bigl[
x_{1}^2(1-\frac{\nu^2}{2\alpha})+
(\beta_{2}x_{2})^2(1+\frac{\nu^2}{2\alpha})\Bigr]
\right)}
\exp{(-i\frac{\pi\nu^2\gamma }{4\alpha }x_{1}x_{2})}.
\end{equation}}
The proof of the proposition is complete.
\end{proof}
\begin{oss}{\rm
We note that in the isotropic case $\nu=0$, we have
$\beta_{2}=1,\gamma=4$, recovering  \eqref{1.isolll}
($ f(x_{1}+ix_{2}) e^{-\pi(x_{1}^2+x_{2}^2)}$) for $\omega=1$.
On the other hand, the reader may have noticed that
it seems difficult to guess the above definition without going through the explicit computations
on the diagonalization of $q$ of the previous sections.}
\end{oss}
\subsection{The energy in the anisotropic $LLL$}
\begin{lem}
The $LLL$ is defined by the proposition \ref{4.pro.LLL} and the
Gross-Pitaevskii energy by \eqref{1.engrpi}. For $u\in LLL$, we have
\begin{multline}\label{4.egpdeu}
E_{GP}(u)=\frac 12\int_{\R^2}
\left(\frac{2\alpha}{\alpha+2\omega^2+\nu^2} \ep^2 x_{1}^2
+\frac{2\alpha(2\nu^2+\ep^2)}{\alpha-\nu^2+2\omega^2}x_{2}^2\right)
\val{u(x_{1}, x_{2})}^2 dx_{1} dx_{2}
\\+\frac g2\int_{\R^2}\val{u(x_{1}, x_{2})}^4 dx_{1} dx_{2}
 +\frac{\mu_2}{4\pi} - \frac{\mu_1}{8\pi}\left(\beta_1\beta_2 +
  \frac1{\beta_1\beta_2} \right).
\end{multline}
\end{lem}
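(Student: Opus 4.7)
The plan is to transport the problem to the diagonalized symplectic coordinates via the metaplectic transform $M$ of Theorem~\ref{3.thm.algwork}, extract the $(y_2,\eta_2)$ harmonic-oscillator ground-state contribution dictated by the $LLL$ structure, and then pull the remaining one-dimensional quadratic form back to the $x$-coordinates via the Segal formula applied to the multiplication operators $x_j^2$.

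Set $u=Mv$. Tracing through the proof of Proposition~\ref{4.pro.LLL}, the condition $u\in LLL$ is equivalent to $v\in\ker(D_{y_2}-i\mu_2 y_2)\cap L^2(\R^2)$, i.e.\ $v(y_1,y_2)=v_1(y_1)\phi_0(y_2)$ with $\phi_0(y)=2^{1/4}\mu_2^{1/4}e^{-\pi\mu_2 y^2}$ the normalized ground state of $D^2+\mu_2^2 y^2$ (eigenvalue $\mu_2/(2\pi)$) and $\norm{v_1}_{L^2(\R)}=1$. The Segal formula~\eqref{3.segalf} together with Theorem~\ref{3.thm.algwork} then give
\beq
\poscal{q^w u}{u}=\poscal{(D_{y_1}^2+\mu_1^2y_1^2+D_{y_2}^2+\mu_2^2y_2^2)v}{v}=\norm{Dv_1}^2+\mu_1^2\norm{yv_1}^2+\frac{\mu_2}{2\pi},
\eeq
accounting, after division by $2$, for the $\mu_2/(4\pi)$ term in~\eqref{4.egpdeu}.

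To convert $\norm{Dv_1}^2$ and $\norm{yv_1}^2$ into $x$-coordinate moments of $\val u^2$, apply Segal once more to the multiplication operators $x_j^2$. Reading the first two rows of $\chi$ in~\eqref{2.symmat} yields $x_1\circ\chi=\lambda_1 y_1-(\lambda_1/d)\eta_2$ and $x_2\circ\chi=\lambda_2 y_2-(\lambda_2/d)\eta_1$. Testing $((x_j\circ\chi)^2)^w$ against $v=v_1\otimes\phi_0$, the cross terms of type $y_1D_{y_2}$ and $y_2D_{y_1}$ vanish by the $y_2$-parity of $\val{\phi_0}^2$ (which forces $\int y_2\val{\phi_0}^2\,dy_2=0$), while the pure squares are computed from $D_{y_2}\phi_0=i\mu_2 y_2\phi_0$ and the Gaussian moment $\int y_2^2\val{\phi_0}^2\,dy_2=1/(4\pi\mu_2)$. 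One obtains
\beq
\int x_1^2\val u^2=\lambda_1^2\norm{yv_1}^2+\frac{\lambda_1^2\mu_2}{4\pi d^2},\quad \int x_2^2\val u^2=\frac{\lambda_2^2}{4\pi\mu_2}+\frac{\lambda_2^2}{d^2}\norm{Dv_1}^2,
\eeq
and inversion followed by substitution reduces the lemma to four algebraic identities: $\mu_1^2/\lambda_1^2$ and $d^2/\lambda_2^2$ must match the two quadratic coefficients in~\eqref{4.egpdeu}, while the constant correction must verify $\mu_1^2\mu_2/d^2=\mu_1\beta_1\beta_2$ and $d^2/\mu_2=\mu_1/(\beta_1\beta_2)$.

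These four identities collapse by routine manipulation of the defining relations $\alpha^2=\nu^4+4\omega^2$, $\mu_1^2\mu_2^2=\ep^2(2\nu^2+\ep^2)$, $\lambda_1^2=(\alpha-2\omega^2+\nu^2)/(2\alpha)$, $\lambda_2^2=(\alpha+2\omega^2+\nu^2)/(2\alpha)$, and $d^2=\gamma^2\lambda_1^2\lambda_2^2/4=((\alpha+\nu^2)^2-4\omega^4)/(4\omega^2)$; combined with $\beta_1\beta_2=\mu_1\mu_2/d^2$, the two $\beta$-identities are immediate, while each coefficient identity reduces to a cancellation of the form $(\alpha-2\omega^2+\nu^2)(\alpha+2\omega^2+\nu^2)=(\alpha+\nu^2)^2-4\omega^4$ or the variant $(\alpha-2\omega^2+\nu^2)(\alpha+2\omega^2-\nu^2)=\alpha^2-(2\omega^2-\nu^2)^2=4\omega^2(1-\omega^2+\nu^2)$. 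The main obstacle is therefore just this algebraic bookkeeping; the analytic content, namely the one-dimensional reduction provided by the $LLL$ factorization and the vanishing of the cross terms via Gaussian parity, is elementary.
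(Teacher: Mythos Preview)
Your argument is correct and is, at its core, the same computation as the paper's, just carried out in the $y$-representation rather than the $x$-representation. The paper introduces the ladder operators $A_j=M(\eta_j-i\mu_j y_j)^wM^*$, uses $A_2u=0$ on the $LLL$ together with $[A_j,A_j^*]=\mu_j/\pi$, and the linear relations $d\mu_1^{-1}\re A_1-\im A_2=d\lambda_1^{-1}x_1$, $d\mu_2^{-1}\re A_2-\im A_1=d\lambda_2^{-1}x_2$ (which are exactly your identities $x_1\circ\chi=\lambda_1 y_1-(\lambda_1/d)\eta_2$, $x_2\circ\chi=\lambda_2 y_2-(\lambda_2/d)\eta_1$ pulled back by $M$) to express $x_j^2$ on the $LLL$ in terms of $(\re A_1)^2$, $(\im A_1)^2$ plus constants. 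You instead push everything to $v=v_1\otimes\phi_0$ and evaluate the Gaussian moments directly. Both routes land on the same quartet of algebraic identities $\mu_1^2/\lambda_1^2$, $d^2/\lambda_2^2$, $\mu_1^2\mu_2/d^2=\mu_1\beta_1\beta_2$, $d^2/\mu_2=\mu_1/(\beta_1\beta_2)$; your derivation is arguably more transparent for readers not fluent in creation--annihilation algebra, while the paper's version avoids the explicit round trip through $M$.
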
{\em Proof.}
In the $LLL$,
one can simplify the energy. We define
\begin{align*}
A_{2}=M(\eta_{2}-i\mu_{2}y_{2})^w M^{*} &= \mu_2\left(\lambda_1d^{-1} D_{x_1} + c \lambda_1 x_2
\right) + i \left(\lambda_2 D_{x_2} - (d\lambda_1^{-1} - \lambda_2
    cd )x_1 \right),\\
A_{1}=M(\eta_{1}-i\mu_{1}y_{1})^w M^{*}  &= \mu_1\left(\lambda_2d^{-1}D_{x_2} + c\lambda_2x_1 \right) +
i \left((\lambda_1 cd - d\lambda_2^{-1})x_2 + \lambda_1 D_{x_1} \right),
\end{align*}
which satisfy the canonical commutation relations: $\left[A_j ,
A_j^*\right] = \mu_j/\pi,$ while all other commutators vanish.
We have proven that
$$
q^w= A_{1}^* A_{1}+A_{2}^* A_{2}+\frac{\mu_{1}+\mu_{2}}{2\pi}=(\re A_{1})^2
+(\im A_{1})^2
+(\re A_{2})^2
+(\im A_{2})^2
$$
and the $LLL$ is defined by the equation $A_{2}u=0$.
On the other hand, we have
$$
   d\mu_{1}^{-1}\re A_{1} -\im A_{2}= d\lambda_1^{-1}x_1, \quad
   d {\mu_2}^{-1} \re A_{2} - \im A_{1}= d\lambda_2^{-1}x_2,
$$
and thus for $u\in LLL$, since $A_{2}u=0$ , using the commutation relations of the $A_{j}$'s,
one gets
\begin{align*}
d^2\lambda_1^{-2}x_1^2&=d^2\mu_{1}^{-2}(\re A_{1})^2+
((A_{2}-A_{2}^*)/2i)^2+2d\mu_{1}^{-1}(\re A_{1})(A_{2}-A_{2}^*)/2i
\\
&=d^2\mu_{1}^{-2}(\re A_{1})^2+\frac{\mu_{2}}{4\pi},
\end{align*}
and similarly,
\begin{align*}
d^2\lambda_2^{-2}x_2^2&=d^2\mu_{2}^{-2}((A_{2}+A_{2}^*)/2)^2+
(\im A_{1})^2
\\&=(\im A_{1})^2+\frac{d^2}{4\pi\mu_{2}}.
\end{align*}
As a result, we get on the $LLL$,
$$\mu_1^2 \lambda_1^{-2}x_1^2 + d^2 \lambda_2^{-2} x_2^2
 =  (\re A_{1})^2
+(\im A_{1})^2
+\frac{d^2}{4\pi\mu_{2}}
+\frac{\mu_{2}\mu_{1}^2}{4\pi d^2},
$$
and
$
q^w=\mu_1^2 \lambda_1^{-2}x_1^2 + d^2 \lambda_2^{-2} x_2^2
-\frac{d^2}{4\pi\mu_{2}}
-\frac{\mu_{2}\mu_{1}^2}{4\pi d^2}+\frac{\mu_{2}}{2\pi},
$
so that
\begin{eqnarray*}
2E_{GP}(u)&=&
  \frac \gamma 2  \int_{\R^2} \left(
  \mu_1\beta_1 x_1^2 + \frac{\mu_1}{\beta_1} x_2^2\right) |u(x_1,x_2)|^2
  dx_1 dx_2 \\
&&  +g\int_{\R^2} \val{u(x_{1},x_{2})}^4 dx_{1}d x_{2}
  \\
 && +\frac{\mu_2}{2\pi} - \frac{\mu_1}{4\pi}\left(\beta_1\beta_2 +
  \frac1{\beta_1\beta_2} \right),
\end{eqnarray*}
for any $u\in LLL$, that is, satisfying \eqref{4.df.LLL}.
We note that
\begin{equation*}
\frac{\gamma\mu_{1}\beta_{1}}{2}=
\frac{2\alpha}{\alpha+2\omega^2+\nu^2} \ep^2,
\text{ (coefficient of $x_{1}^2$)}\qquad
\frac{\gamma\mu_{1}}{2\beta_{1}}=
\frac{2\alpha(2\nu^2+\ep^2)}{\alpha-\nu^2+2\omega^2},
\text{  (coefficient of $x_{2}^2$)   }.
\end{equation*}
\begin{defi}\label{4.def.lllnew}
For $u\in LLL$ (see the proposition \ref{4.pro.LLL}), we define
\begin{equation}
 {\mathcal E}_{LLL}(u)=\frac 12\int_{\R^2}(\ep^2 x_{1}^2+ \kappa_1^2 x_{2}^2)
\val{u(x_{1},x_{2})}^2
dx_{1}dx_{2}+\frac{g_{1}}2\int_{\R^2}\val{u(x_{1}, x_{2})}^4 dx_{1}
dx_{2},
\end{equation}
with
\begin{equation}\label{once421}
\kappa_1^2=\frac{(\alpha+2\omega^2+\nu^2)(2\nu^2+\ep^2)}{\alpha-\nu^2+2\omega^2},\quad
g_{1}=g\frac{\alpha+2\omega^2+\nu^2}{2\alpha},
\quad
\alpha=\sqrt{\nu^4+4\omega^2}.
\end{equation}We note that,
from \eqref{4.egpdeu},
\begin{equation}\label{4.egpuu}
E_{GP}(u)=\frac{2\alpha}{\alpha+2\omega^2+\nu^2}\mathcal E_{LLL}(u)
+\frac{\mu_2}{4\pi} - \frac{\mu_1}{8\pi}\left(\beta_1\beta_2 +
  \frac1{\beta_1\beta_2}\right).
\end{equation}
\end{defi}
\begin{oss}{\rm
Since $\alpha^2=\nu^4+4\omega^2$, we see
 that
\begin{equation}
(2\nu^2+\ep^2)\bigl(1+
\frac{2\nu^2}{\alpha-\nu^2+2\omega^2}
\bigr)=
\kappa^2=\frac{(\alpha+2\omega^2+\nu^2)(2\nu^2+\ep^2)}{\alpha-\nu^2+2\omega^2}\ge 2\nu^2+\ep^2,
\end{equation}
and
$
\kappa^2=\ep^2\Longleftrightarrow \nu=0.
$
}
\end{oss}
\begin{oss}{ \rm We stay away from the case where $\omega=0$
and shall always assume $\omega>0$.
In the case $\omega=0$,
the quadratic part of the energy is diagonal
and the $LLL$
is,
$$
v_{1}(x_{1})\otimes 2^{1/4}(2-\ep^2)^{1/8} e^{-\pi (2-\ep^2)^{1/2} x_{2}^2},
$$
and we get a 1D problem on the function $v_{1}$.
}
\end{oss}
\subsection{The (final)
reduction to a simpler lowest Landau level}
Given the fact that in \eqref{4.df.LLL}, we can write
$F(x_{1}+i\beta_{2}x_{2})$ as a holomorphic function times
$e^{-\delta z^2}$, with $\delta=\gamma\pi\nu^2/(8\beta_2\alpha)$,
and that the energy ${\mathcal E}_{LLL}$  depends only on the modulus
of $u$ and not on its phase, it is equivalent to minimize ${\mathcal
E}_{LLL}$ on the $LLL$ or on the space
$$f\bigl(x_{1}+i\beta_{2}x_{2}\bigr)
\exp{\left(-\frac{\gamma\pi}{4\beta_{2}}\Bigl[ x_{1}^2+
(\beta_{2}x_{2})^2\Bigr] \right)}
,\quad\text{with $f$ entire.}
$$ A rescaling in $x_1$ and $x_2$
yields the space of the introduction with
\begin{equation}\label{4.changeu}
u(x_1,x_2)=\sqrt{\frac
\gamma 2} v(y_1,y_2),\quad y_1=x_1\sqrt{\frac \gamma {2\beta_2}},\
y_2=x_2\sqrt{\frac {\gamma\beta_2} {2}},
\end{equation}
and, with $\Lambda_{0}$ given by \eqref{4.lllpi},
 the mapping
$LLL\ni u\mapsto v\in \Lambda_{0}$
is bijective and isometric.
With $\kappa_{1}, g_{1}$
given in the definition \ref{4.def.lllnew},
$\beta_{2}$ in \eqref{formulebeta2},
$\gamma$ in
\eqref{formulegamma},
we introduce
\begin{equation}\label{some424}
\kappa=\frac{\kappa_1}{\beta_2},\
g_0=\frac{g_1\gamma^2}{4\beta_2},
\end{equation}
and
\begin{equation}
E(v)=\frac12\int_{\R^2}(\epsilon^2y_{1}^2+\kappa^2 y_{2}^2)\val{v(y_{1},y_{2})}^2 dy_{1}dy_{2}+\frac{g_{0}}{2}\norm{v}_{L^4(\R^2)}^4.
\end{equation}
Using the transformation \eqref{4.changeu}, we have
\begin{equation}\label{newener}{\mathcal E}_{LLL}(u)=\frac{2\beta_2}\gamma E(v),
\end{equation}
so that,
via the definition \ref{4.def.lllnew}, we are indeed reduced to the minimization of
\eqref{eq:nrjLLL2} in the space $\Lambda_{0}$ (given in \eqref{lll}) under  the constraint
$\norm{u}_{L^2(\R^2)}=1$.
We note also that the quantities
\begin{align}
\frac{2\alpha}{\alpha+2\omega^2+\nu^2}, \frac{2\beta_{2}}{\gamma},\quad
\text{\footnotesize (factors of $\mathcal E_{LLL}(u)$ in \eqref{4.egpuu} and $E(v)$ in \eqref{newener})},
\\
\text{and }
\beta_{2},
\frac{\gamma^2}{\beta_{2}},\quad \frac{\alpha+2\omega^2+\nu^2}{2\alpha}
 \text{\footnotesize (factors of $\kappa $ in \eqref{some424}, of  $g_{1}$ in \eqref{some424}), of
 $g$ in \eqref{once421} },
 \end{align}
 are bounded and away from zero as long as $\omega$ stays away from zero,
 a condition that we shall always assume, say $0<\omega_{0}\le \omega\le 1$.
\section{Weak anisotropy}
This section is devoted to the proof of
Theorem~\ref{th:cv-faible}.  We assume $\ep \leq \kappa \ll \ep^{1/3}$.
 The isotropic case is recovered by assuming $\kappa = \ep.$
We  first give some approximation results in
subsection~\ref{ssec:approx-faible}, and prove the theorem  in
subsection~\ref{ssec:nrj-bounds-faible}.
\par
We recall that the space $\Lambda_0$, the operator $\Pi_0$, the energy
$E$ and the minimization problem $I(\ep,\kappa)$ are defined by
\eqref{lll}, \eqref{eq:projLLL}, \eqref{eq:nrjLLL2} and
\eqref{eq:minLLL}, respectively.
An important test function will be \eqref{eq:utau}, namely
\begin{equation}\label{eq:utau2}
u_\tau(x_1,x_2) = e^{\frac{\pi}2 \left(z^2 - |z|^2\right)} \Theta\left
  (\sqrt{\tau_I}z, \tau\right), \quad z = x_1 + ix_2,  .
\end{equation} for $\tau =
\tau_R + i\tau_I = e^{\frac{2i\pi}3}$.
\subsection{Approximation results}
\label{ssec:approx-faible}
\begin{lem}
\label{lm:approx1}
Let $u(x) = f(x_1 + ix_2) e^{-\frac{\pi}{2}|x|^2} \in L^\infty(\R^2)$,
with $f$ holomorphic. Assume  $0\leq \beta
\leq 1$ and let $p\in C^{0,\beta}(\R^2)$ be such that $\supp(p) \subset
B_S$ the Euclidean ball of radius $S>0$ and of center $0$. Define
\begin{equation}
  \label{eq:pdilate}
  \rho (x) = \frac 1 {\sqrt{R_1 R_2}} p\left(\frac{x_1}{R_1},
    \frac{x_2}{R_2} \right).
\end{equation}
Then, for any $r\geq 1,$ there exists a constant $C_{S,r}>0$ depending only on
$S$ and $r$ such that, setting $R =
\min(R_1,R_2)$, we have,
\begin{equation}
  \label{eq:approx}
  \left\| \Pi_{0}\left(\rho u \right) - \rho u\right\|_{L^r(\R^2)} \leq
    C_{S,r} \|u\|_{L^\infty(\R^2)} \|p\|_{C^{0,\beta}(\R^2)}  \frac
    {\left(R_1 R_2\right)^{\frac 1 r - \frac 1 2}} {R^{\beta}}.
\end{equation}
\end{lem}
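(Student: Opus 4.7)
\medskip

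\textbf{Proof proposal.} The whole argument rests on exploiting the reproducing property of $\Pi_{0}$ on $\Lambda_{0}$. Since $u(x)=f(x_1+ix_2)e^{-\frac{\pi}{2}|x|^2}$ with $f$ entire, Proposition~\ref{4.pro.pizer0} guarantees $u\in\Lambda_0$ and hence $\Pi_0 u = u$, i.e.
\begin{equation*}
u(x)=\int_{\R^2}\Pi(x,y)\,u(y)\,dy,\qquad \Pi(x,y)=e^{-\frac{\pi}{2}|x-y|^2}e^{-i\pi(x_2 y_1-y_2 x_1)}.
\end{equation*}
Multiplying by $\rho(x)$ and subtracting from $\Pi_0(\rho u)(x)$, the phase factor drops out of the modulus and
\begin{equation*}
\Pi_0(\rho u)(x)-\rho(x)u(x)=\int_{\R^2}\Pi(x,y)\bigl[\rho(y)-\rho(x)\bigr]u(y)\,dy.
\end{equation*}
This is the single identity that makes the lemma possible, because the difference $\rho(y)-\rho(x)$ is small on the scale where the Gaussian kernel $|\Pi(x,y)|=e^{-\frac{\pi}{2}|x-y|^2}$ is appreciable.

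Next I would transfer the H\"older regularity from $p$ to $\rho$. A direct chain rule in the definition \eqref{eq:pdilate} of $\rho$ yields
\begin{equation*}
|\rho(y)-\rho(x)|\le \frac{\|p\|_{C^{0,\beta}(\R^2)}}{\sqrt{R_1 R_2}}\,\frac{|y-x|^\beta}{R^\beta},\qquad R=\min(R_1,R_2),
\end{equation*}
valid for all $x,y\in\R^2$. Combined with $|u(y)|\le\|u\|_{L^\infty}$ and the Gaussian modulus of $\Pi$, this gives the pointwise bound
\begin{equation*}
\bigl|\Pi_0(\rho u)(x)-\rho(x)u(x)\bigr|\le \frac{\|u\|_{L^\infty}\|p\|_{C^{0,\beta}}}{\sqrt{R_1 R_2}\,R^\beta}\,(K\ast \mathbf{1}_{T})(x),
\end{equation*}
where $K(z)=e^{-\frac{\pi}{2}|z|^2}|z|^\beta\in L^1(\R^2)$ and $T=[-SR_1,SR_1]\times[-SR_2,SR_2]$ contains the support of $\rho$. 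The subtle point is that one cannot drop the convolution: the unconstrained integral $\int e^{-\frac{\pi}{2}|x-y|^2}|x-y|^\beta\,dy$ is merely a finite constant and gives only a uniform-in-$x$ bound, which is not integrable. The indicator $\mathbf{1}_T$ enters because $\rho(y)-\rho(x)$ vanishes whenever both $x$ and $y$ lie in $T^c$; to make this precise one splits the domain into $x\in T$ (where one keeps the full integral and uses $\int K < \infty$) and $x\notin T$ (where $\rho(x)=0$ forces $y\in T$).

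Finally, the $L^r$ estimate follows from Young's convolution inequality $\|K\ast\mathbf{1}_T\|_{L^r}\le \|K\|_{L^1}\|\mathbf{1}_T\|_{L^r}$. Here $\|K\|_{L^1}=c_\beta$ depends only on $\beta\in[0,1]$, and $\|\mathbf{1}_T\|_{L^r}=(4S^2 R_1 R_2)^{1/r}$, whence
\begin{equation*}
\bigl\|\Pi_{0}(\rho u)-\rho u\bigr\|_{L^r(\R^2)}\le C_{S,r}\,\|u\|_{L^\infty}\|p\|_{C^{0,\beta}}\,\frac{(R_1 R_2)^{1/r-1/2}}{R^\beta},
\end{equation*}
as desired. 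The main (mild) obstacle is precisely the step I highlighted above: the H\"older bound alone produces only an $L^\infty$ estimate, so the support of $p$ must be fed back into the convolution to control the $L^r$ norm; once the estimate is cast as a convolution against the $L^1$ kernel $K$, Young's inequality finishes the job and supplies a constant depending only on $S$ and $r$ (since $\beta$ is confined to the compact interval $[0,1]$).
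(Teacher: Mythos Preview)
Your argument is correct and in fact cleaner than the paper's. The paper proceeds by establishing the endpoint cases $\beta=0$ (a crude bound on each of $\Pi_0(\rho u)$ and $\rho u$ separately via Young's inequality) and $\beta=1$ (a first-order Taylor expansion of $\rho(y)$ around $x$, combined with the same reproducing identity $\Pi_0 u=u$ you use), and then invokes real interpolation between $C^0$ and $C^{0,1}$ to cover intermediate $\beta$. Your direct use of the $C^{0,\beta}$ estimate on $\rho(y)-\rho(x)$, followed by the convolution bound $K\ast\mathbf{1}_T$ and Young, bypasses the interpolation machinery entirely and delivers the result for all $\beta\in[0,1]$ in one stroke; this is essentially the more elementary route the authors themselves allude to in the comment following their proof.

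One small correction: your appeal to Proposition~\ref{4.pro.pizer0} for $\Pi_0 u=u$ is not quite right, because that proposition characterises $\Lambda_0$ as a subspace of $L^2(\R^2)$, whereas here $u$ is only assumed to lie in $L^\infty$ (and in the intended application $u=u_\tau$ has periodic modulus, hence is not square-integrable). The correct reference is Proposition~\ref{4.pro.focksprime}, which extends the reproducing identity to all of $\mathscr{K}=\ker(\bar\partial+\tfrac{\pi}{2}z)\cap\mathscr{S}'$; the paper makes exactly this observation in its $\beta=1$ step. This does not affect the substance of your argument, since for $u\in L^\infty$ the integral $\int\Pi(x,y)u(y)\,dy$ converges absolutely and the identity holds pointwise.
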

\begin{proof}We first prove the lemma in the case
$\beta=0$. For this purpose, we write
$$\left| \Pi_{0}(\rho u) \right| \leq \int_{\R^2} e^{-\frac{\pi}2 |x-y|^2}
|u(y)| |\rho(y)| dy.$$
Young's inequality implies, for any $r\geq 1$ and any $p,q\geq 1$
such that $1/p  + 1/q = 1+ 1/r,$
$$\left\| \Pi_{0}(\rho u) \right\|_{L^r} \leq \left\|
e^{-\frac{\pi}2|x|^2} \right\|_{L^p} \left\| u \rho \right\|_{L^q} \leq
\|u\|_{L^\infty}\left\|
e^{-\frac{\pi}2|x|^2} \right\|_{L^p} \left\| \rho \right\|_{L^q} .$$
Fixing $q=r$, hence $p=1$, we find
\begin{equation}
  \label{eq:approx1}
  \left\| \Pi_{0}(\rho u) \right\|_{L^r} \leq 2 \|u\|_{L^\infty}
  \left\| \rho \right\|_{L^r} =2 \|u\|_{L^\infty} \left(R_1
    R_2\right)^{\frac 1 r - \frac 1 2} \|p\|_{L^r}.
\end{equation}
This proves \eqref{eq:approx} for $\beta = 0.$
\par
Next, we assume $\beta = 1$. We use a Taylor expansion
of $\rho(y) = \rho(x + y-x)$ around $x$:
\begin{multline*}
\rho(y) = \rho(x) \\+
  \frac 1 {\sqrt{R_1 R_2}} \int_0^1 \nabla p\left(\frac{x_1}{R_1} + t
    \frac{y_1 - x_1}{R_1} ,
    \frac{x_2}{R_2} + t\frac{y_2 - x_2}{R_2}  \right) \cdot \left(
  \frac{y_1 - x_1}{R_1},\frac{y_2 - x_2}{R_2} \right) dt.
\end{multline*}
We then notice that, although $u\notin \Lambda_0$ a priori,
it belongs to $\mathscr K$ (see the proposition \ref{4.pro.focksprime})
and we have
$\Pi_{0}(u) = u$ since  $u\in L^\io$ and $u(x) = f(x_1 +
ix_2) \exp(-\pi |x|^2/2)$
with $f$ holomorphic. Hence, we have
\begin{multline*}
  \Pi_{0}(\rho u) - \rho u = \int_{B_{S+1}^{R_1,R_2}} e^{-\frac{\pi}2 |x-y|^2
    + i\pi \left(x_2 y_1 - y_2 x_1\right)} u(y_1,y_2)\\
\times  \frac 1 {\sqrt{R_1 R_2}} \int_0^1 \nabla p\left(\frac{x_1}{R_1} + t
    \frac{y_1 - x_1}{R_1} ,
    \frac{x_2}{R_2} + t\frac{y_2 - x_2}{R_2}  \right) \cdot \left(
  \frac{y_1 - x_1}{R_1},\frac{y_2 - x_2}{R_2} \right) dt dy, \\
- \rho(x) \int_{\left(B_{S+1}^{R_1,R_2}\right)^c} u(y) e^{-\frac{\pi}2 |x-y|^2
    + i\pi \left(x_2 y_1 - y_2 x_1\right)}dy
\end{multline*}
where the set $B_{S+1}^{R_1,R_2}$ is
\begin{equation}\label{eq:BSR}
B_{S+1}^{R_1,R_2} = \left\{ (y_1,y_2) = (R_1 t_1, R_2 t_2), \quad t\in
  B_{S+1}\right\}.
\end{equation}
We thus have, with $R=\min(R_{1},R_{2})$,
\begin{multline}\label{eq:approx1-1}
\left|  \Pi_{0}(\rho u) - \rho u \right| \leq\left\| \nabla p
  \right\|_{L^\infty} \int_{B_{S+1}^{R_1,R_2}}
e^{-\frac{\pi}2 |x-y|^2} |u(y)| \frac 1 {\sqrt{R_1 R_2}} \frac{|y-x|}R
  dy \\
 + |\rho(x)| \int_{\left(B_{S+1}^{R_1,R_2}\right)^c} |u(y)| e^{-\frac{\pi}{2}|x-y|^2} dy.
\end{multline}
We bound the first term of the right-hand side of \eqref{eq:approx1-1}
using Young's inequality, while for the second term, we have, $\forall
x\in \supp(\rho)\subset B_S^{R_1,R_2},$
\begin{multline*}
\int_{\left(B_{S+1}^{R_1,R_2}\right)^c} |u(y)|
e^{-\frac{\pi}{2}|x-y|^2} dy \leq \|u\|_{L^\infty}
e^{-\frac{\pi}4 R^2} \int_{\R^2} e^{-\frac \pi 4 |x-y|^2} dy \\
=
  4\|u\|_{L^\infty}e^{-\frac{\pi}4 R^2} \leq \|u\|_{L^\infty} \frac C R,
\end{multline*}
where $C$ is a universal constant. Hence, we have
\begin{eqnarray*}
\left\|  \Pi_{0}(\rho u) - \rho u \right\|_{L^r} &\leq& \frac 1 R
\left\| \nabla p \right\|_{L^\infty}
\left\| |y| e^{-\frac{\pi}2 |y|^2}\right\|_{L^1} \| u \|_{L^\infty}
\frac{1}{\sqrt{R_1R_2}} |B_{S+1}^{R_1,R_2}|^{1/r} \\
&&\hs\hs\hs\hs + \frac{C}{R}\|u\|_{L^\infty}\left\|\rho \right\|_{L^r} \\
&=&\frac 1 R \left\| \nabla p
  \right\|_{L^\infty} \sqrt 2  \| u \|_{L^\infty}
(R_1R_2)^{\frac 1 r - \frac 1 2} |B_{S+1}|^{1/r} \\
&& \hs\hs\hs\hs + \frac{C} R \|u\|_{L^\infty} \|p\|_{L^\infty} (R_1R_2)^{\frac 1 r -
  \frac 1 2} |B_{S}|^{1/r}.
\end{eqnarray*}
This gives \eqref{eq:approx} for $\beta=1$. We then conclude by a real
interpolation argument between $C^0$ and $C^{0,1}$.
\end{proof}
A comment is in order here: we have chosen to state Lemma~\ref{lm:approx1} with a
general function $p$. However, since our aim is to apply the above result
with the special case $p(x) = \left(1-|x|^2 \right)_+^{1/2}$, it is also
possible to use explicitly this value of $p$ in order to give a simpler
proof of the above result. The method would then be to prove the
estimate for $r=+\infty$ first, then for $r=1$, and then use an
interpolation argument between $L^1$ and $L^\infty.$ For instance, the
proof of the $r=+\infty$ case would go as follows:
\begin{eqnarray*}
\left| \Pi_{0}(\rho u )(x) - \rho(x) u(x)\right| &=&\left| \int_{\R^2}
e^{-\frac{\pi}2 |x-y|^2 + i\pi(x_2y_1 - y_2x_1)} \left( \rho(y)u(y) -
  \rho(x)u(y) \right)dy \right| \\
&\leq&
\|u\|_{L^\infty} \int_{\R^2} e^{-\frac{\pi}2 |x-y|^2} \left|
\rho(y) - \rho(x) \right|dy \\
&\leq & \|u\|_{L^\infty} \int_{\R^2} e^{-\frac{\pi}2 |x-y|^2}
\sqrt{\frac{|x-y|}{R}} dy \\
&=& \frac{\|u\|_{L^\infty}}{\sqrt R} \int_{\R^2}
e^{-\frac\pi 2 |y|^2}\sqrt{|y|} dy.
\end{eqnarray*}
The proof of the case $r=1$ is slightly more involved, but is based on
the same idea.
\bigskip
We now prove
\begin{lem}
\label{lm:approx2}
With the same hypotheses as in Lemma~\ref{lm:approx1}, we have, for any
$s\geq 1,$
\begin{equation}
  \label{eq:approx21}
  \left(\int_{\R^2} x_1^{2s} \left|\Pi_{0}(\rho u) - \rho u\right|^2
  \right)^{1/2} \!\leq C_{S,s} \|u\|_{L^\infty(\R^2)}
  \|p\|_{C^{0,\beta}(\R^2)} \frac{1+R_1^s S^s} {R^{\beta}},
\end{equation}
and
\begin{equation}
  \label{eq:approx22}
  \left(\int_{\R^2} x_2^{2s} \left|\Pi_{0}(\rho u) - \rho u\right|^2
  \right)^{1/2} \! \leq C_{S,s} \|u\|_{L^\infty(\R^2)} \|p\|_{C^{0,\beta}(\R^2)}
  \frac{ (1+R_2^s S^s)} {R^{\beta}},
\end{equation}
where $C_{S,s}$ depends only on $S$ and $s$.
\end{lem}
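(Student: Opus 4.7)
The plan is to reduce the weighted estimates \eqref{eq:approx21}--\eqref{eq:approx22} to the unweighted bound supplied by Lemma~\ref{lm:approx1} by splitting $\R^2$ according to the size of the weight. For \eqref{eq:approx21}, I set $A=\{|x_1|\le 2R_1 S\}$, a threshold chosen as twice the $x_1$-extent of $\supp(\rho)\subset B_S^{R_1,R_2}$; indeed, the scaling $\rho(x)=(R_1R_2)^{-1/2}p(x_1/R_1,x_2/R_2)$ forces $|y_1|\le R_1 S$ and $|y_2|\le R_2 S$ on $\supp(\rho)$. The two regions will be handled by rather different mechanisms, the first by Lemma~\ref{lm:approx1} and the second by exponential decay of the Bargmann kernel.

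On $A$ the weight is trivially bounded by $(2R_1S)^{2s}$, so
$$\int_A x_1^{2s}|\Pi_{0}(\rho u)-\rho u|^2\le (2R_1 S)^{2s}\,\|\Pi_{0}(\rho u)-\rho u\|_{L^2(\R^2)}^2.$$
Applying Lemma~\ref{lm:approx1} with $r=2$ (so that the prefactor $(R_1R_2)^{1/r-1/2}$ is exactly $1$) gives the $(R_1 S)^s/R^{\beta}$ contribution on the right-hand side of \eqref{eq:approx21}, already in the desired form.

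On $A^c$ the function $\rho$ vanishes, so $\Pi_{0}(\rho u)-\rho u$ reduces to $\Pi_{0}(\rho u)$; moreover, for every $y\in\supp(\rho)$ one has $|y_1|\le R_1 S\le |x_1|/2$, hence $|x-y|\ge |x_1|/2$. Splitting the Gaussian as
$$e^{-\tfrac{\pi}{2}|x-y|^2}\le e^{-\tfrac{\pi}{4}|x-y|^2}\,e^{-\tfrac{\pi}{16}x_1^2}$$
lets me factor out Gaussian decay in $x_1$ and leaves a convolution
$$|\Pi_{0}(\rho u)(x)|\le \|u\|_{L^\infty}\,e^{-\tfrac{\pi}{16}x_1^2}\!\int|\rho(y)|\,e^{-\tfrac{\pi}{4}|x-y|^2}\,dy,$$
which is controlled in $L^2$ by Young's inequality ($L^2\!\ast\! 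L^1\to L^2$), using $\|\rho\|_{L^2}=\|p\|_{L^2}\le|B_S|^{1/2}\|p\|_{C^{0,\beta}}$. After absorbing $x_1^{2s}$ into the exponential (since $t^{2s}e^{-\pi t^2/16}$ is uniformly bounded by $C_s$) and using the remaining $e^{-\pi x_1^2/16}$ restricted to $A^c$, which is at most $e^{-\pi R_1^2 S^2/4}$, the contribution from $A^c$ is dominated by $C_{S,s}\|u\|_{L^\infty}\|p\|_{C^{0,\beta}}$ times an exponentially small factor, and is easily absorbed into the additive ``$1$'' of $1+R_1^sS^s$.

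Combining the two regions yields \eqref{eq:approx21}; inequality \eqref{eq:approx22} follows by interchanging the roles of $(x_1,R_1)$ and $(x_2,R_2)$, since the setup is perfectly symmetric. No step presents a real obstacle: the only bookkeeping care required is to check that the uniform constant produced on $A^c$ fits under the envelope $(1+R_1^sS^s)/R^{\beta}$ across the full range of positive $R_1,R_2,S$, and this follows from elementary monotonicity of the polynomial and exponential factors together with $s\ge 1\ge\beta$.
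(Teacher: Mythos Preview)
Your argument is correct and follows a genuinely different route from the paper's proof. The paper does not reduce to Lemma~\ref{lm:approx1}; instead it reproves the estimate with the weight inserted. For $\beta=0$ it writes $|x_1|^s\le 2^{s-1}(|x_1-y_1|^s+|y_1|^s)$ under the kernel and applies Young's inequality to each piece, the first absorbing $|x_1-y_1|^s$ into the Gaussian and the second replacing $\rho(y)$ by $|y_1|^s\rho(y)$ (which produces the factor $R_1^s$). For $\beta=1$ it redoes the Taylor expansion of $\rho$ exactly as in the proof of Lemma~\ref{lm:approx1}, again splitting $|x_1|^s$ the same way, and then it obtains $0<\beta<1$ by real interpolation between $C^0$ and $C^{0,1}$. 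Your near/far decomposition is cleaner in two respects: it recycles Lemma~\ref{lm:approx1} wholesale on the near set $A$, so no Taylor expansion or interpolation is repeated, and on $A^c$ the Gaussian decay of the Bargmann kernel disposes of the weight in one stroke for every $\beta\in[0,1]$. The only price is the small bookkeeping you flagged, namely checking that $R^{\beta}e^{-\pi R_1^2S^2/8}$ is bounded by a constant depending only on $S$ (true since $R\le R_1$ and $t^{\beta}e^{-cS^2t^2}$ is bounded for $0\le\beta\le1$), so that the $A^c$ contribution is absorbed into the additive $1$ in $(1+R_1^sS^s)/R^{\beta}$. The paper's approach, by contrast, is more self-contained and makes the dependence on $R_1$ emerge directly from the moment $\||y_1|^s p\|_{L^2}$ rather than from a spatial cutoff.
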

\begin{proof}
  Here again, we first deal with the case $\beta=0$. For this purpose,
  we write:
\begin{multline}\label{eq:approx21-1}
|x_1|^s \left| \Pi_{0}(\rho u) \right| \leq 2^{s-1} \int_{\R^2} |x_1 - y_1|^s
e^{-\frac{\pi}2 |x-y|^2} |u(y)| \rho(y) dy\\
+ 2^{s-1}\int_{\R^2} |y_1|^s
e^{-\frac{\pi}2 |x-y|^2} |u(y)| \rho(y) dy ,
\end{multline}
where we have used the inequality $(a+b)^s \leq 2^{s-1}(a^s+b^s)$, valid
for any $a,b\geq 0, s\ge 1.$
The first line of \eqref{eq:approx21-1} is dealt with exactly as in the
proof of Lemma~\ref{lm:approx1}, leading to \eqref{eq:approx1} with
$r=2$, which reads here
\begin{multline}\label{eq:approx21-2}
\left\|\int_{\R^2} |x_1 - y_1|^s
e^{-\frac{\pi}2 |x-y|^2} |u(y)| \rho(y) dy \right\|_{L^2} \leq
\|u\|_{L^\infty} \left\| |x|^s e^{-\frac{\pi}2 |x|^2}\right\|_{L^1}
\|\rho \|_{L^2} \\ \leq C_s \|u\|_{L^\infty} \|p\|_{L^2},
\end{multline}
where $C_s$ depends only on $s$.
The second line of \eqref{eq:approx21-1} is treated in the same way, but
$\rho(y)$ is replaced by $|y_1|^s\rho(y)$, that is, $p(y)$ is replaced
by $R_1^s |y_1|^s p(y).$ Hence,
we have
\begin{equation}\label{eq:approx21-3}
\left\|\int_{\R^2} |y_1|^s
e^{-\frac{\pi}2 |x-y|^2} |u(y)| \rho(y) dy \right\|_{L^2} \leq 2R_1^s
\|u\|_{L^\infty} \left\||y_1|^s p\right\|_{L^2}.
\end{equation}
Collecting \eqref{eq:approx21-1}, \eqref{eq:approx21-2} and
\eqref{eq:approx21-3}, we find
$$\left\||x_1|^s \Pi_{0}(\rho u) \right\|_{L^2} \leq
C_s(1+R_1^s S^s) \|u\|_{L^\infty} \|p\|_{C^0}|B_S|^{1/2}.
$$
This proves \eqref{eq:approx21} for $\beta=0$.
\par
Next, we consider the case $\beta = 1.$ Here again, we use a Taylor
expansion to obtain \eqref{eq:approx1-1}. This implies
\begin{align*}
|x_1|^s \left| \Pi_{0}(\rho u) - \rho u \right| \leq 2^{s-1} \frac{\left\| \nabla p
  \right\|_{L^\infty}}R \int_{B_{S+1}^{R_1,R_2}}
e^{-\frac{\pi}2 |x-y|^2} |u(y)| \frac 1 {\sqrt{R_1 R_2}}|y-x||y_1-x_1|^s
  dy \\
+  2^{s-1}\frac{\left\| \nabla p
  \right\|_{L^\infty}}R \int_{B_{S+1}^{R_1,R_2}}
e^{-\frac{\pi}2 |x-y|^2} |u(y)| \frac 1 {\sqrt{R_1 R_2}}|y-x||y_1|^s
  dy \\
+ |x_1|^s  |\rho(x)| \int_{\left(B_{S+1}^{R_1,R_2}\right)^c} |u(y)| e^{-\frac{\pi}{2}|x-y|^2} dy,
\end{align*}
where $B_{S+1}^{R_1,R_2}$ is defined by \eqref{eq:BSR}.
We use Young's inequality again, finding
\begin{align*}
  \left\| |x_1|^s  \left|\Pi_{0}(\rho u) - \rho u\right| \right\|_{L^2}
    \leq 2^{s-1} \frac{\left\| \nabla p \right\|_{L^\infty}}R \left\| |y|^{s+1}
      e^{-\frac{\pi}2 |y|^2} \right\|_{L^1} \left( \frac{|B_{S+1}^{R_1,R_2}|}{R_1R_2}
    \right)^{1/2} \|u\|_{L^\infty} \\
 +2^{s-1}\frac{\left\| \nabla p \right\|_{L^\infty}}R \left\| |y|
      e^{-\frac{\pi}2 |y|^2} \right\|_{L^1} \left(\int_{B_{S+1}^{R_1,R_2}}
      \frac{|y_1|^{2s}}{R_1R_2} dy  \right)^{1/2} \|u\|_{L^\infty} \\
+ \frac C R \|u\|_{L^\infty} \left\| |x_1|^s \rho \right\|_{L^2},
\end{align*}
where $C$ is a universal constant.
Hence,
\begin{eqnarray*}
\left\| |x_1|^s  \left|\Pi_{0}(\rho u) - \rho u\right| \right\|_{L^2}
&\leq & C_{S,s}  \frac{\left\|  p \right\|_{C^1}}R
\left(1 + R_1^s S^s\right)\|u\|_{L^\infty}.
\end{eqnarray*}
This gives \eqref{eq:approx21} in the case $\beta=1$. Here again, we
conclude with a real interpolation argument. The proof of
\eqref{eq:approx22} follows  the same lines.
\end{proof}
\subsection{Energy bounds}
\label{ssec:nrj-bounds-faible}
\begin{pro}\label{pr:bsup}
  Let $\tau \in \C\setminus\R$, let $p\in C^{0,1/2}(\R^2)$ be such that
  $\supp(p) \subset K$ for some compact set $K$, and $\int |p|^2 = 1.$
  Consider $u_\tau$ as defined by \eqref{eq:utau}, and define
  \begin{equation}
    \label{eq:ansatz2}
    v = \left\| \Pi_{0}(\rho u_\tau )\right\|_{L^2(\R^2)}^{-1}
    \Pi_{0}(\rho u_\tau),
  \end{equation}
where $\rho$ is given by
\begin{equation}
  \label{eq:alpha}
  \rho(x) = \frac 1 {\sqrt{R_1 R_2}} p\left(\frac{x_1}{R_1},
    \frac{x_2}{R_2} \right), \quad  R_1 =
\left(\frac{4g_0\kappa}{\pi\ep^3}\right)^{1/4}, \quad R_2 =
\left(\frac{4g_0\ep}{\pi\kappa^3}\right)^{1/4}.
\end{equation}
Then we have, with $E(u)$ defined by \eqref{eq:nrjLLL2}
\begin{equation}
  \label{eq:bsup}
  E(u) = \sqrt{\frac{2g \ep \kappa}{\pi}}\left( \int_{\R^2}
    \frac12 |x|^2 |p(x)|^2 + \frac{\pi \gamma(\tau)}4 |p(x)|^4\right) +
  O\left(\sqrt{\ep\kappa}\left(\frac{\kappa^3}{\ep}\right)^{1/8}\right),
\end{equation}
for $(\ep, \kappa\ep^{-1/3})\rightarrow (0,0)$,
where $\gamma(\tau)$ is given by \eqref{eq:dfgamma}.
\end{pro}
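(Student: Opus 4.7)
The plan is to compute $E(v)$ by replacing the projected test function $\Pi_{0}(\rho u_\tau)$ by the unprojected $\rho u_\tau$ via the approximation Lemmas~\ref{lm:approx1}--\ref{lm:approx2}, then exploiting the periodicity of $\val{u_\tau}^2$ over the lattice associated to $\Z\oplus \tau\Z$ to evaluate the remaining integrals, and finally substituting the specific values of $R_1, R_2$ from \eqref{eq:alpha}.

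First I would control the normalization factor $N^2 = \norm{\Pi_{0}(\rho u_\tau)}_{L^2}^2$. Lemma~\ref{lm:approx1} with $r=2$ and $\beta=1/2$ gives $\norm{\Pi_{0}(\rho u_\tau)-\rho u_\tau}_{L^2}=O(R^{-1/2})$, where $R=\min(R_1,R_2)=R_2$ in the regime $\ep\le\kappa$. Since $\val{u_\tau}^2$ is periodic and $\rho^2$ varies slowly on the scales $R_1,R_2$, a standard homogenization/averaging estimate exploiting the H\"older regularity of $p$ gives $\int\rho^2\val{u_\tau}^2=\fint\val{u_\tau}^2\cdot\int\rho^2+O(R^{-1/2})=\fint\val{u_\tau}^2+O(R^{-1/2})$, using $\int\rho^2=\int\val p^2=1$. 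Combining, $N^2=\fint\val{u_\tau}^2+O(R^{-1/2})$, so $1/N^2$ and $1/N^4$ are well-approximated by the corresponding powers of $1/\fint\val{u_\tau}^2$.

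Next I would evaluate the two parts of $E(v)=(2N^2)^{-1}\int(\ep^2 x_1^2+\kappa^2 x_2^2)\val{\Pi_{0}(\rho u_\tau)}^2+(g_0/2)N^{-4}\int\val{\Pi_{0}(\rho u_\tau)}^4$. For the quadratic terms, Lemma~\ref{lm:approx2} with $s=1$, $\beta=1/2$ controls the replacement of $\Pi_{0}(\rho u_\tau)$ by $\rho u_\tau$; periodic averaging then yields $\int x_i^2\val{\rho u_\tau}^2=R_i^2\fint\val{u_\tau}^2\int y_i^2\val{p(y)}^2\,dy + $ lower-order terms. For the $L^4$ term, Lemma~\ref{lm:approx1} with $r=4$ combined with the identity $\val a^4-\val b^4=(\val a^2-\val b^2)(\val a^2+\val b^2)$ and H\"older's inequality handles the replacement, using uniform $L^4$ bounds on both $\rho u_\tau$ and $\Pi_{0}(\rho u_\tau)$; periodic averaging then gives $\int\val{\rho u_\tau}^4=(R_1 R_2)^{-1}\fint\val{u_\tau}^4\int\val p^4 +$ lower order, where the factor $(R_1R_2)^{-1}$ comes from the change of variables.

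Finally, after division by $N^2$ and $N^4$ and use of \eqref{eq:dfgamma}, the ratio $\fint\val{u_\tau}^4/(\fint\val{u_\tau}^2)^2=\gamma(\tau)$ appears in the $L^4$ contribution. Substituting \eqref{eq:alpha}, one checks that $\ep^2 R_1^2$, $\kappa^2 R_2^2$ and $g_0/(R_1 R_2)$ are all of order $\sqrt{g_0\ep\kappa/\pi}$; this is precisely the point of the balanced choice of $R_1, R_2$, and the leading-order coefficients combine to give the prefactor $\sqrt{2g_0\ep\kappa/\pi}$ in \eqref{eq:bsup}. The main obstacle is the $L^4$ estimate, because the difference of fourth powers is not directly provided by the approximation lemmas and must be dealt with by the $\val a^4-\val b^4$ identity together with uniform $L^4$ bounds (which themselves follow from Lemma~\ref{lm:approx1}). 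Tracking the various error contributions, each is dominated by $\sqrt{\ep\kappa}\cdot R_2^{-1/2}=O(\sqrt{\ep\kappa}(\kappa^3/\ep)^{1/8})$ in the weak-anisotropy regime, which is precisely the remainder stated in \eqref{eq:bsup}.
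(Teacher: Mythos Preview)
Your proposal is correct and follows essentially the same route as the paper: approximate $\Pi_{0}(\rho u_\tau)$ by $\rho u_\tau$ via Lemmas~\ref{lm:approx1}--\ref{lm:approx2}, then use periodic averaging of $|u_\tau|^2$ and $|u_\tau|^4$ against the slowly varying profile $\rho$, and finally insert the balanced scales $R_1,R_2$. The only cosmetic difference is in the $L^4$ step: the paper bounds $\bigl|\int|\Pi_{0}(\rho u_\tau)|^4-\int|\rho u_\tau|^4\bigr|$ directly by $2\bigl(\|\Pi_{0}(\rho u_\tau)\|_{L^4}^3+\|\rho u_\tau\|_{L^4}^3\bigr)\|\Pi_{0}(\rho u_\tau)-\rho u_\tau\|_{L^4}$ rather than going through the factorization $|a|^4-|b|^4=(|a|^2-|b|^2)(|a|^2+|b|^2)$, but the two are equivalent after one further application of H\"older.
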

\begin{nb}
{\rm  The $L^\io$
function $\rho u_{\tau}$
does not belong to $\Lambda_{0}$
since it is compactly supported and not identically 0;
as a result, $\norm{\Pi_{0}(\rho u_{\tau})}_{L^2}\not=0$
and $v$ makes sense.}
\end{nb}
\begin{proof}
  First note that $R = \min(R_1,R_2) = R_2$, and
  that Lemma~\ref{lm:approx1} with $r=2$ implies
\begin{equation}\label{eq:bsup4}
\left| \|\Pi_{0}(\rho
    u_\tau)\|_{L^2} - \|\rho u_\tau\|_{L^2} \right| \leq C R^{-1/2} =
  C \left(\frac{\kappa^3}\ep\right)^{1/8}.
\end{equation}
We then apply Lemma~\ref{lm:approx2} for $s=1, \beta=1/2$, finding
\begin{eqnarray*}
\left| \int_{\R^2} x_1^2 \left|\Pi_{0}(\rho u_\tau)\right|^2 -
  \int_{\R^2} x_1^2 |\rho|^2 |u_\tau|^2 \right| &\leq& C \bigl(\|x_1
\Pi_{0}(\rho u_\tau)\|_{L^2}
+ \| x_1 \rho u_\tau\|_{L^2}\bigr)
\frac{1+R_1}{ R^{1/2}} \\
&\leq &  C \left(2\| x_1 \rho u_\tau\|_{L^2} + C\frac{1+R_1}{R^{1/2}}\right)
\frac{1+R_1}{ R^{1/2}}.
\end{eqnarray*}
We also compute
$$\int_{\R^2} x_1^2 |\rho(x)|^2 |u_\tau(x)|^2 dx \leq R_1^2
\|u_\tau\|_{L^\infty}^2 \int_{\R^2}
x_1^2 |p(x)|^2 dx \leq C R_1^2.$$
Hence, we get
\begin{equation}
  \label{eq:bsup1}
  \frac{\ep^2}2\left| \int_{\R^2} x_1^2 \left|\Pi_{0}(\rho u_\tau)\right|^2 -
  \int_{\R^2} x_1^2 |\rho|^2 |u_\tau|^2 \right| \leq C\ep^2
\frac{1+R_1^2}{R^{1/2}} \leq C \sqrt{\ep \kappa}
\left(\frac{\kappa^3}{\ep}\right)^{1/8}.
\end{equation}
A similar argument allows to show that
\begin{equation}
  \label{eq:bsup2}
  \frac{\kappa^2}2\left| \int_{\R^2} x_2^2 \left|\Pi_{0}(\rho u_\tau)\right|^2 -
  \int_{\R^2} x_2^2 |\rho|^2 |u_\tau|^2 \right| \leq C\kappa^2
\frac{1+R_2^2}{R^{1/2}} \leq C \sqrt{\ep \kappa}
\left(\frac{\kappa^3}{\ep}\right)^{1/8}.
\end{equation}
Turning to the last term of the energy, we apply Lemma~\ref{lm:approx1}
again, with $r=4, \beta=1/2$, finding
\begin{eqnarray*}
  \left| \int_{\R^2} \left|\Pi_{0}(\rho u_\tau)\right|^4 -
    \int_{\R^2} |\rho u_\tau|^4 \right| &\leq & 2
  \left(\left\|\Pi_{0}(\rho u_\tau)\right\|_{L^4}^3 + \left\|\rho
      u_\tau\right\|_{L^4}^3 \right) 
\left\| \Pi_{0}(\rho u_\tau) -
    \rho u_\tau \right\|_{L^4} \\
&\leq & C  \left\|\rho u_\tau\right\|_{L^4}^3
(R_1R_2)^{-1/4} R^{-1/2}.
\end{eqnarray*}
In addition, we have
$$\int_{\R^2} |\rho u_\tau|^4 \leq \|u_\tau\|_{L^\infty}^4
\int_{\R^2}|\rho|^4 = \|u_\tau\|_{L^\infty}^4
\left(R_1R_2\right)^{-1} \int_{\R} p^4.$$
Hence, we obtain
\begin{equation}
  \label{eq:bsup3}
  \left| \int_{\R^2} \left|\Pi_{0}(\rho u_\tau)\right|^4 -
    \int_{\R^2} |\rho u_\tau|^4 \right| \leq C
  \left(R_1R_2\right)^{-1} R^{-1/2} \leq C \sqrt{\ep \kappa}
\left(\frac{\kappa^3}{\ep}\right)^{1/8}.
\end{equation}
Combining \eqref{eq:bsup1}, \eqref{eq:bsup2} and \eqref{eq:bsup3}, we
have
$$E\left(\Pi_{0}(\rho u_\tau)\right) = E(\rho u_\tau) \left[
1+ O\left(\left(\frac{\kappa^3}{\ep}\right)^{1/8}\right) \right].$$
Hence, with the help of \eqref{eq:bsup4}, we get
$$E(v) = E\left(\frac{\rho u_\tau}{\|\rho
    u_\tau\|_{L^2}}\right) \left[1+  O\left( \left(\frac{\kappa^3}{\ep}\right)^{1/8}\right) \right].$$
Finally, we estimate the terms of $E(\rho u_\tau / \|\rho
    u_\tau\|_{L^2} )$: using real interpolation between $C^0$ and $C^{0,1}$,
    we obtain
\begin{multline}\label{eq:bsup8}
\| \rho u_\tau \|_{L^2}^2 = \int_{\R^2} |p(x)|^2 |u_\tau(R_1 x_1,
R_2 x_2)|^2 dx \\= \fint |u_\tau|^2 + O\left(\frac 1 {R^{1/2}}\right) =
\fint |u_\tau|^2 + O\left( \left(\frac{\kappa^3}\ep\right)^{1/8}
\right).
\end{multline}
Moreover, we have
\begin{eqnarray}
\label{eq:bsup5}
\frac{\ep^2}2 \int_{\R^2} x_1^2 |\rho|^2 |u_\tau|^2 &=&\frac{\ep^2}2
R_1^2\left[\fint |u_\tau|^2 + O\left(
    \left(\frac{\kappa^3}\ep\right)^{1/8}\right)  \right]\int_{\R^2}
x_1^2 |p(x)|^2dx, \\
\label{eq:bsup6}
\frac{\kappa^2}2 \int_{\R^2} x_2^2 |\rho|^2 |u_\tau|^2 &=&
\frac{\kappa^2}2 R_2^2  \left[ \fint |u_\tau|^2 + O\left(
  \left(\frac{\kappa^3}\ep\right)^{1/8}\right) \right]\int_{\R^2} x_2^2 |p(x)|^2dx, \\
\label{eq:bsup7}
\frac{g}{2} \int_{\R^2} |\rho|^4 |u_\tau|^4 &=& \frac{g}{2R_1R_2} \left[\fint
|u_\tau|^4 + O\left(
  \left(\frac{\kappa^3}\ep\right)^{1/8}\right) \right] \int_{\R^2} |p|^4.
\end{eqnarray}
Thus, collecting \eqref{eq:bsup8}, \eqref{eq:bsup5}, \eqref{eq:bsup6}
and \eqref{eq:bsup7},
\begin{eqnarray*}
E(u) &=& \left[\frac{\ep^2}2 R_1^2\int_{\R^2}
x_1^2 |p(x)|^2dx + \frac{\kappa^2}2 R_2^2\int_{\R^2} x_2^2 |p(x)|^2dx
\right. \\
&&\left. +\frac{\fint |u_\tau|^4}{\left(\fint |u_\tau|^2\right)^2}
\frac{g_0}{2R_1
  R_2} \int_{\R^2} |p|^4 \right]\left[1+  O\left(\left(\frac{\kappa^3}{\ep}\right)^{1/8}\right)\right] \\
&=& \sqrt{\frac{2g_0\ep \kappa}{\pi}} \left(\int_{\R^2} \frac12
\left(x_1^2 +
  x_2^2\right) |p(x)|^2 + \frac{\pi\gamma(\tau)}{4} |p|^4\right) \\
&&\left[1 +
  O\left(\left(\frac{\kappa^3}{\ep}\right)^{1/8}\right)\right]. \\
&=& \sqrt{\frac{2g_0\ep \kappa}{\pi}} \left(\int_{\R^2} \frac12
\left(x_1^2 +
  x_2^2\right) |p(x)|^2 + \frac{\pi\gamma(\tau)}{4} |p|^4\right) \\
&&+  O\left(\sqrt{\ep\kappa}\left(\frac{\kappa^3}{\ep}\right)^{1/8}\right).
\end{eqnarray*}
\end{proof}
\begin{proof}[Proof of Theorem~\ref{th:cv-faible}:] We first prove the lower
bound in \eqref{eq:limnrjfaible}: this is done by noticing that
$$J(\ep,\kappa) \leq I(\ep,\kappa),$$
where
$$J(\ep,\kappa) = \inf\left\{E(u), \quad u \in L^2\left(\R^2,
    (1+|x|^2)dx\right)\cap L^4(\R^2), \quad \int_{\R^2} |u|^2 =
  1\right\}.$$
In addition, the minimizer of $J(\ep,\kappa)$ may be explicitly
computed (up to the multiplication by a complex function of modulus one):
\begin{equation}\label{eq:TF1}
u(x) = \sqrt\frac{2}{\pi R_1 R_2} \left(1 -\frac{x_1^2}{R_1^2}
- \frac{x_2^2}{R_2^2}  \right)_+^{1/2},
\end{equation}
with $R_1,R_2$ defined by \eqref{eq:alpha}.
Inserting \eqref{eq:TF1} in the energy, one finds the lower bound of
\eqref{eq:limnrjfaible}. In addition, the inverted parabola \eqref{eq:TF1} is
compactly supported, so it cannot be in $\Lambda_0$. Hence, the inequality is
strict.
\par
In order to prove the upper bound, we apply Proposition~\ref{pr:bsup},
with
$$p(x) = \sqrt{\frac 2 {\pi\sqrt{\gamma(\tau)}}} \left(1 -
    \frac{|x|^2}{\sqrt{\gamma(\tau)}}\right)^{1/2}_+,$$ and $\tau = j.$
  This corresponds to minimizing the leading order term of
  \eqref{eq:bsup} with respect to $\tau$ and $p$, with the constraint
  $\int |p|^2 = 1$.
\end{proof}
\section{Strong anisotropy}
We give in this Section the proof of Theorem~\ref{th:cv-fort}. We deal
here with the strongly asymmetric case that is, \eqref{eq:fort}, which
we recall here:
\begin{equation}
  \label{eq:fort2}
  {\kappa \gg \ep^{1/3}}
\end{equation}
We    first   prove    an   upper    bound   for    the    energy   in
Subsection~\ref{ssec:borne-sup-fort},    then     a    lower    bound    in
Subsection~\ref{ssec:borne-inf-fort},    and   conlude    the    proof   in
Subsection~\ref{ssec:fin-fort}
\subsection{Upper bound for the energy}
\label{ssec:borne-sup-fort}
\begin{lem}
  \label{lm:ftest-fort-1}
Assume that $\rho\in L^2(\R)$. Then the function
\begin{equation}
  \label{eq:ftest-fort}
  u(x_1,x_2) = \frac 1 {2^{1/4}} e^{-\frac{\pi}{2}
x_2^2 } \int_{\R} e^{-\frac{\pi}{2} \left((x_1-y_1)^2 - 2iy_1x_2 \right)}
\rho(y_1)dy_1 ,
\end{equation}
satisfies $u\in \Lambda_0.$
\end{lem}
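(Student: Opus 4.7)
The plan is to verify the two defining properties of $\Lambda_0$ given in \eqref{lll}: that $u$ factors as $f(z)\,e^{-\pi|z|^2/2}$ with $f$ entire in $z = x_1 + ix_2$, and that $u \in L^2(\R^2)$. Both reduce to elementary Gaussian manipulations.

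For the factorisation, I would reorganise the exponent in the integrand of \eqref{eq:ftest-fort}. A direct expansion gives
$$-\frac{\pi}{2}\bigl((x_1-y_1)^2 - 2iy_1 x_2\bigr) - \frac{\pi}{2} x_2^2 \;=\; -\frac{\pi}{2}|z|^2 + \pi z y_1 - \frac{\pi}{2} y_1^2,$$
so that \eqref{eq:ftest-fort} becomes
$$u(x_1,x_2) \;=\; e^{-\pi |z|^2/2}\, f(z), \qquad f(z) \;:=\; \frac{1}{2^{1/4}} \int_{\R} e^{\pi z y_1 - \pi y_1^2/2}\, \rho(y_1)\, dy_1.$$
The integrand is entire in $z$ for each fixed $y_1$, and for $z$ in any compact $K \subset \C$ the Cauchy-Schwarz bound
$$|f(z)| \;\leq\; \frac{\|\rho\|_{L^2(\R)}}{2^{1/4}} \left( \int_\R e^{2\pi (\re z) y_1 - \pi y_1^2}\, dy_1 \right)^{1/2}$$
is locally uniform in $z$. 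This justifies differentiation under the integral sign (equivalently, Morera's theorem applied to small circuits in $K$), so $f$ is entire.

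For the $L^2$ bound, I would compute $\|u\|_{L^2(\R^2)}^2$ directly. Opening the modulus squared introduces auxiliary variables $y_1, y_1'$; then I integrate first in $x_2$ via the identity $\int_\R e^{-\pi x_2^2 + i\pi a x_2}\, dx_2 = e^{-\pi a^2/4}$ with $a = y_1 - y_1'$, and subsequently complete the square and integrate in $x_1$. Both Gaussian integrations contribute the same factor $e^{-\pi(y_1-y_1')^2/4}$, which gives
$$\|u\|_{L^2(\R^2)}^2 \;=\; \frac{1}{\sqrt 2} \iint_{\R^2} e^{-\pi(y_1-y_1')^2/2}\, \rho(y_1)\, \overline{\rho(y_1')}\, dy_1\, dy_1'.$$
Young's convolution inequality, combined with $\|e^{-\pi t^2/2}\|_{L^1(\R)} = \sqrt 2$, then yields $\|u\|_{L^2(\R^2)} \leq \|\rho\|_{L^2(\R)} < \infty$.

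Neither step presents a genuine obstacle; the argument is essentially bookkeeping of Gaussian integrals. The one point that merits care is the holomorphy of $f$ under the integral sign, which is settled by the uniform local Cauchy-Schwarz bound displayed above, since the weight $e^{-\pi y_1^2/2}$ dominates $e^{2\pi (\re z) y_1}$ uniformly for $\re z$ in a bounded set. Combining the two properties yields $u \in \Lambda_0$ via the explicit form in \eqref{lll}. As a byproduct, one could alternatively recognise $u$ (up to the constant $2^{1/4}$) as $\widetilde\Pi$ applied to the tempered distribution $\rho(x_1)\otimes \delta_0(x_2)$, in which case the holomorphy part would also follow from Proposition \ref{4.pro.focksprime}; the $L^2$ statement, however, still requires the explicit calculation above because this distribution is not itself in $L^2$.
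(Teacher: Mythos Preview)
Your proof is correct and follows essentially the same approach as the paper: the holomorphy step is identical (rewrite the exponent so that $u\,e^{\pi|z|^2/2}$ is visibly an integral of entire functions of $z$), and the $L^2$ step differs only cosmetically---the paper bounds $|u(x_1,x_2)|\le 2^{-1/4}e^{-\pi x_2^2/2}\bigl|\rho*e^{-\pi(\cdot)^2/2}\bigr|(x_1)$ pointwise and then applies Young's inequality, whereas you expand $\|u\|_{L^2}^2$ first and then apply Young to the resulting convolution pairing. Your route even yields the slightly sharper constant $\|u\|_{L^2}\le\|\rho\|_{L^2}$ versus the paper's $2^{1/4}\|\rho\|_{L^2}$, but this is immaterial for the lemma.
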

\begin{proof}
  We first write
$$u(x_1,x_2)e^{\frac{\pi}2 \left(x_1^2 + x_2^2\right)}  = \frac 1 {2^{1/4}}\int_\R
e^{-\frac{\pi}2 \left(y_1^2 - 2(x_1 + i x_2)y_1\right)}\rho(y_1) dy_1,$$
which is a holomorphic function of $x_1 + i x_2$. In addition, we have
$$|u(x_1,x_2) | \leq \frac 1 {2^{1/4}} e^{-\frac{\pi}2 x_2^2} \left| \rho *
  e^{-\frac{\pi}2 y_1^2}\right|(x_1),$$
Hence, using Young's inequality, we get
$$\|u\|_{L^2(\R^2)} \leq \frac 1 {2^{1/4}}\|\rho\|_{L^2(\R)} \left\| e^{-\frac{\pi}2
    y_1^2} \right\|_{L^1(\R)} = 2^{1/4} \|\rho\|_{L^2(\R)},$$
hence $u\in L^2(\R^2).$
\end{proof}
\begin{lem}
\label{lm:ftest-fort-2}
Let $p\in C^2(\R)$ have compact support with $\supp(p)\subset(-T,T)$,
and consider the function
\begin{equation}
  \label{eq:alpha-fort}
  \rho(t) = \frac 1 {\sqrt{R}} p\left( \frac t R \right).
\end{equation}
Then, for any $r\geq 1,$ there exists a constant $C_{r}$ depending
only on $r$ such that the function $u$ defined by
\eqref{eq:ftest-fort} satisfies, for $R\geq 1,$
\begin{multline}
  \label{eq:dl-fort}
 \left\| u(x_1,x_2) - 2^{1/4} \rho(x_1) e^{-\pi x_2^2 + i\pi x_1 x_2}
   - i 2^{1/4} x_2\rho'(x_1) e^{-\pi x_2^2 + i\pi x_1 x_2}
 \right\|_{L^r(\R^2)} \\ \leq C_{r} T^{1/r}\frac{\|p''\|_{L^\infty(\R)}} {R^{5/2-1/r}}.
\end{multline}
\end{lem}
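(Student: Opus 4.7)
The plan is to reduce the integral defining $u$ to a Gaussian convolution in a single variable via the shift $s = y_1 - x_1$, expand $\rho(x_1+s)$ by Taylor's formula to second order in $s$, integrate the zeroth- and first-order terms explicitly against the Gaussian to recover the claimed main and subleading terms, and then estimate the quadratic remainder using both a pointwise bound and the compact support of $\rho$ (this combination is what produces the $T^{1/r}$ factor).

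Explicitly, substituting $y_1 = x_1 + s$ in \eqref{eq:ftest-fort} gives
\begin{equation*}
u(x_1,x_2) = \frac{1}{2^{1/4}} e^{-\pi x_2^2/2}\, e^{i\pi x_1 x_2}\int_{\R} e^{-\pi s^2/2}\, e^{i\pi s x_2}\,\rho(x_1+s)\, ds .
\end{equation*}
Completing the square yields the Gaussian identities
\begin{equation*}
\int_{\R} e^{-\pi s^2/2} e^{i\pi s x_2}\, ds = \sqrt 2\, e^{-\pi x_2^2/2},\qquad \int_{\R} s\, e^{-\pi s^2/2} e^{i\pi s x_2}\, ds = i\sqrt 2\, x_2\, e^{-\pi x_2^2/2},
\end{equation*}
so that writing $\rho(x_1+s) = \rho(x_1) + s\rho'(x_1) + F(x_1,s)$ with
\begin{equation*}
F(x_1,s) = \int_0^s (s-t)\,\rho''(x_1+t)\, dt,
\end{equation*}
the first two terms reproduce exactly $2^{1/4}\rho(x_1) e^{-\pi x_2^2 + i\pi x_1 x_2}$ and $i\,2^{1/4} x_2 \rho'(x_1) e^{-\pi x_2^2 + i\pi x_1 x_2}$. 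Thus the difference appearing in \eqref{eq:dl-fort} is
\begin{equation*}
\mathcal R(x_1,x_2) = \frac{1}{2^{1/4}} e^{-\pi x_2^2/2} e^{i\pi x_1 x_2}\int_{\R} e^{-\pi s^2/2} e^{i\pi s x_2}\, F(x_1,s)\, ds.
\end{equation*}

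The main (and only nontrivial) step is to estimate $\|\mathcal R\|_{L^r(\R^2)}$. Since $\rho(t) = R^{-1/2} p(t/R)$, we have $\|\rho''\|_{L^\infty} \leq R^{-5/2}\|p''\|_{L^\infty}$, giving the pointwise bound $|F(x_1,s)| \leq \frac{s^2}{2} R^{-5/2}\|p''\|_{L^\infty}$. On the other hand, since $\supp\rho \subset (-RT,RT)$, the function $F(\cdot,s)$ vanishes outside the set $\{x_1 : x_1\in(-RT,RT)\}\cup\{x_1 : x_1+s\in(-RT,RT)\}$, whose Lebesgue measure is at most $4RT$ uniformly in $s$. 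Combining these two bounds yields, for every $s$,
\begin{equation*}
\|F(\cdot,s)\|_{L^r(dx_1)} \leq C_r\, s^2\, R^{-5/2}\|p''\|_{L^\infty}\,(RT)^{1/r}.
\end{equation*}
Applying Minkowski's integral inequality in $s$ to control the partial $L^r(dx_1)$-norm of $\mathcal R(\cdot,x_2)$ by the integral of $e^{-\pi s^2/2}\|F(\cdot,s)\|_{L^r(dx_1)}$, and using $\int s^2 e^{-\pi s^2/2}\,ds < \infty$, we obtain
\begin{equation*}
\|\mathcal R(\cdot,x_2)\|_{L^r(dx_1)} \leq C_r\, T^{1/r}\, R^{-5/2+1/r}\, \|p''\|_{L^\infty}\, e^{-\pi x_2^2/2}.
\end{equation*}
Raising to the $r$-th power and integrating in $x_2$ against $\int e^{-\pi r x_2^2/2}\,dx_2 < \infty$ yields \eqref{eq:dl-fort}.

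The main obstacle is the one just handled: a pointwise bound on $F$ alone would give an $x_1$-independent remainder that is not in $L^r(dx_1)$; conversely, just using the support of $\rho$ without the Taylor control would not produce the $R^{-5/2}$ decay. The decisive trick is that $F$ is simultaneously \emph{small} (quadratic in $s$ with factor $R^{-5/2}\|p''\|_\infty$) and \emph{supported in a set of measure $O(RT)$}, which together give the $T^{1/r}/R^{5/2-1/r}$ rate. The rest is a routine application of Minkowski and Fubini against the rapidly decaying Gaussian in $x_2$.
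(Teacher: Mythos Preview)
Your proof is correct and follows essentially the same approach as the paper: second-order Taylor expansion of $\rho$ about $x_1$, explicit Gaussian integrals for the zeroth and first-order terms, and a remainder estimate combining the pointwise bound $|F|\lesssim s^2 R^{-5/2}\|p''\|_\infty$ with the $O(RT)$ support in $x_1$. The only technical difference is that the paper applies Jensen's inequality (against the Gaussian probability weight) to pass from $|v|$ to $|v|^r$ before integrating in $x_1$, whereas you use Minkowski's integral inequality; both routes yield the same bound.
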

\begin{proof}
We use a Taylor expansion of $p\left(\frac{y_1}R\right)$ around $\frac{x_1}R$, that is,
\begin{multline}
\label{eq:taylor-fort}
p\left(\frac{y_1}R\right) = p\left(\frac{x_1}R\right) + \frac 1 R
p'\left(\frac{x_1}R\right)(y_1 - x_1) \\ + \frac 1 {R^2}(x_1 -
y_1)^2\int_0^1 (1-t)p''\left(\frac{x_1}R + \frac{t(y_1-x_1)}R \right) dt.
\end{multline}
In addition we have
$$\frac 1 {2^{1/4}} e^{-\frac{\pi}{2}
x_2^2} \int_{\R} e^{-\frac{\pi}{2} \left((x_1-y_1)^2 - 2iy_1x_2 \right)}
\frac 1 {\sqrt R} p\left(\frac{x_1}R\right)dy_1 = \frac 1 {\sqrt R}
p\left(\frac{x_1}R\right) 2^{1/4} e^{-\pi x_2^2 + i\pi x_1 x_2},$$
and
\begin{multline*}
\frac 1 {2^{1/4}} e^{-\frac{\pi}{2}
x_2^2 } \int_{\R} e^{-\frac{\pi}{2} \left((x_1-y_1)^2 - 2iy_1x_2 \right)}
\frac 1 {R^{3/2}} p'\left(\frac{x_1}R\right)(y_1 - x_1)dy_1 \\=
\frac 1 {R^{3/2}}
i 2^{1/4} x_2 p'\left(\frac{x_1}R\right)  e^{-\pi x_2^2 + i\pi x_1 x_2}.
\end{multline*}
Setting
\begin{equation}\label{eq:dfv}
v(x_1,x_2) = u(x_1,x_2) - 2^{1/4} \rho(x_1) e^{-\pi x_2^2 + i\pi x_1 x_2}
   - i  2^{1/4} x_2\rho'(x_1) e^{-\pi x_2^2 + i\pi x_1 x_2},
\end{equation}
we infer
\begin{eqnarray*}
\left|v(x_1,x_2) \right|
&\leq& \frac 1 {2^{1/4} R^{5/2}} e^{-\frac{\pi}{2} x_2^2 }
\int_{\R}\int_0^1  y_1^2 e^{-\frac{\pi}2 y_1^2} (1-t) \left| p''\left(
  \frac{x_1}R + t\frac{y_1}R \right) \right| dt dy_1 \\
&\leq &\frac {\|p''\|_{L^\infty}} {2^{1/4} R^{5/2}} e^{-\frac{\pi}{2} x_2^2 }
\int_{\R}\int_0^1  y_1^2 e^{-\frac{\pi}2 y_1^2} (1-t) {\bf
  1}_{(-TR,TR)}(x_1 + ty_1) dt dy_1.
\end{eqnarray*}
Hence, using Jensen's inequality, we see that there is a constant $C_r$
depending only on $r$ such that
\begin{eqnarray*}
\left|v(x_1,x_2) \right|^r &\leq& C_r \frac {\|p''\|_{L^\infty}^r}
{R^{5r/2}} e^{-r\frac{\pi}{2} x_2^2 } \int_{\R}\int_0^1  y_1^2
e^{-\frac{\pi}2 y_1^2} (1-t) {\bf 1}_{(-TR,TR)}(x_1 + ty_1) dt dy_1,
\end{eqnarray*}
whence
\begin{eqnarray*}
  \left\| v \right\|^r_{L^r} &\leq & C_r \frac{\|p''\|_{L^\infty}^r} {R^{5r/2}} \int_\R \int_\R \int_0^1
  e^{-r\frac{\pi}{2} x_2^2 } y_1^2 e^{-\frac{\pi}2 y_1^2} (1-t) \int_\R
  {\bf 1}_{(-TR,TR)}(x_1 + ty_1)dx_1 dt
dx_2 dy_1 \\
&=& C_r \frac {\|p''\|_{L^\infty}^r} {R^{5r/2}} (2TR)  \int_\R \int_\R \int_0^1
  e^{-r\frac{\pi}{2} x_2^2 } y_1^2 e^{-\frac{\pi}2 y_1^2} (1-t)dt
dx_2 dy_1 \\
&=& C'_r \frac {\|p''\|_{L^\infty}^r} {R^{5r/2}} TR,
\end{eqnarray*}
which implies \eqref{eq:dl-fort}.
\end{proof}
\begin{lem}
\label{lm:ftest-fort-3}
Under the same assumptions as Lemma~\ref{lm:ftest-fort-2}, let $u$ be
defined by \eqref{eq:ftest-fort}. Then, there exists
a constant $C_T>0$ depending only on $T$ such that
$u$ satisfies
\begin{multline}
  \label{eq:dl-fort-x1}
 \int_{\R^2} x_1^2\left| u(x_1,x_2) - 2^{1/4} \rho(x_1) e^{-\pi x_2^2
 + i\pi x_1 x_2} - i 2^{1/4} x_2 \rho'(x_1) e^{-\pi x_2^2 + i\pi x_1 x_2}
 \right|^2 dx \\ \leq C_T\frac{\|p''\|_{L^\infty(\R)}^2} {R^2},
\end{multline}
and
\begin{multline}
  \label{eq:dl-fort-x2}
 \int_{\R^2} x_2^2\left| u(x_1,x_2) - 2^{1/4} \rho(x_1) e^{-\pi x_2^2 + i\pi x_1 x_2}
   - i 2^{1/4} x_2 \rho'(x_1) e^{-\pi x_2^2 + i\pi x_1 x_2}
 \right|^2 dx \\ \leq C_T\frac{\|p''\|_{L^\infty(\R)}^2} {R^4}.
\end{multline}
\end{lem}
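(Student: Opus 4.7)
The plan is to rerun the scheme used in the proof of Lemma~\ref{lm:ftest-fort-2}, but now inserting an extra weight $x_1^2$ or $x_2^2$ before integrating. First I would reuse the pointwise bound that comes out of the Taylor expansion \eqref{eq:taylor-fort}: for $v$ defined by \eqref{eq:dfv},
\begin{equation*}
|v(x_1,x_2)| \leq C \frac{\|p''\|_{L^\infty}}{R^{5/2}}\, e^{-\pi x_2^2/2} \int_\R \int_0^1 y_1^2 e^{-\pi y_1^2/2}(1-t)\, \mathbf{1}_{(-TR,TR)}(x_1+ty_1)\, dt\, dy_1.
\end{equation*}
Squaring and applying Jensen's inequality exactly as in the proof of Lemma~\ref{lm:ftest-fort-2}, I obtain
\begin{equation*}
|v(x_1,x_2)|^2 \leq C \frac{\|p''\|_{L^\infty}^2}{R^5}\, e^{-\pi x_2^2} \int_\R \int_0^1 y_1^2 e^{-\pi y_1^2/2}(1-t)\, \mathbf{1}_{(-TR,TR)}(x_1+ty_1)\, dt\, dy_1.
\end{equation*}

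The estimate \eqref{eq:dl-fort-x2} is then essentially free: multiplying the previous bound by $x_2^2$ and integrating, the $x_2$-integral against $x_2^2 e^{-\pi x_2^2}$ produces just a universal constant, while the remaining $(x_1,y_1,t)$-integration is identical to the one already performed for Lemma~\ref{lm:ftest-fort-2} (with the indicator contributing a factor $2TR$ after the $x_1$-integration). This reproduces the $L^r$ estimate of that lemma for $r=2$ up to a universal constant, giving the stated bound $C_T\|p''\|_{L^\infty}^2 R^{-4}$.

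For \eqref{eq:dl-fort-x1} the $x_1$-weight is more delicate, because the support condition $|x_1+ty_1|<TR$ only controls $|x_1|$ up to $|ty_1|$. The clean way forward is the elementary splitting $|x_1|^2 \leq 2(TR)^2 + 2 t^2 y_1^2$ on that support, followed by
\begin{equation*}
\int_\R x_1^2\, \mathbf{1}_{(-TR-ty_1,\, TR-ty_1)}(x_1)\, dx_1 \leq C\bigl((TR)^3 + t^3 |y_1|^3\bigr).
\end{equation*}
The $(TR)^3$ piece combines with the $R^{-5}$ prefactor to produce the leading order $R^{-2}$; the $|y_1|^3$ piece is absorbed by the Gaussian factor $y_1^2 e^{-\pi y_1^2/2}$ and is of lower order in $R$. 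The remaining Gaussian integrations in $x_2$, $t$, $y_1$ are trivial and yield the claimed bound $C_T \|p''\|_{L^\infty}^2 R^{-2}$.

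The only genuine calculation is the $x_1$-weight step for \eqref{eq:dl-fort-x1}; everything else is a mechanical rerun of the argument of Lemma~\ref{lm:ftest-fort-2}. I do not anticipate any conceptual obstacle — the purpose of the lemma is to quantify, in weighted $L^2$ norms, how close the test function $u$ is to the ansatz $2^{1/4}(\rho(x_1) + i x_2\rho'(x_1))e^{-\pi x_2^2 + i\pi x_1 x_2}$, in preparation for controlling the harmonic potential contributions $\ep^2 x_1^2|u|^2$ and $\kappa^2 x_2^2|u|^2$ in the energy $E$ of \eqref{eq:nrjLLL2}.
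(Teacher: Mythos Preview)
Your proposal is correct and follows essentially the same route as the paper. The paper also reuses the Taylor-remainder pointwise bound from Lemma~\ref{lm:ftest-fort-2}, then handles the $x_1$-weight by writing $|x_1|\le |x_1+ty_1|+t|y_1|$ on the support of the indicator (so $|x_1|\le TR+t|y_1|$), applies Jensen, and integrates; your splitting $x_1^2\le 2(TR)^2+2t^2y_1^2$ is the squared version of the same move, and the $x_2$-weighted estimate is handled identically in both.
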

\begin{proof}
  Here again, we use the Taylor expansion \eqref{eq:taylor-fort}. Hence,
  $v$ being defined by \eqref{eq:dfv}, we have
  \begin{eqnarray*}
     |x_1| |v(x_1,x_2)|  &\leq &\frac {\|p''\|_{L^\infty}} {2^{1/4}R^{5/2}}
     |x_1| e^{-\frac{\pi}{2} x_2^2 }
\int_{\R}\int_0^1  y_1^2 e^{-\frac{\pi}2 y_1^2} (1-t) {\bf
  1}_{(-TR,TR)}(x_1 + ty_1) dt dy_1 \\
&\leq & \frac {\|p''\|_{L^\infty}} {2^{1/4}R^{5/2}}  e^{-\frac{\pi}{2} x_2^2 }
\int_{\R}\int_0^1  y_1^2 e^{-\frac{\pi}2 y_1^2} (1-t)|x_1+ty_1| {\bf
  1}_{(-TR,TR)}(x_1 + ty_1) dt dy_1 \\
&& +  \frac {\|p''\|_{L^\infty}} {2^{1/4}R^{5/2}}  e^{-\frac{\pi}{2} x_2^2 }
\int_{\R}\int_0^1  |y_1|^3 e^{-\frac{\pi}2 y_1^2} t(1-t) {\bf
  1}_{(-TR,TR)}(x_1 + ty_1) dt dy_1.
  \end{eqnarray*}
Hence, using Jensen's inequality and arguing as in the proof of Lemma~\ref{lm:ftest-fort-2}, we have
$$
\left\| x_1 v \right\|_{L^2(\R^2)} \leq C  \frac {\|p''\|_{L^\infty}}
{R^{5/2}} \left( (RT)^{3/2} + \sqrt{RT} \right),
$$
where $C$ is a universal constant. This implies \eqref{eq:dl-fort-x1}.
A similar computation gives
$$\left\| x_2 v \right\|_{L^2(\R^2)} \leq C
\frac{\|p''\|_{L^\infty}}{R^{5/2}} \sqrt{RT},$$
which proves \eqref{eq:dl-fort-x2}.
\end{proof}
\subsection{Lower bound for the energy}
\label{ssec:borne-inf-fort}
We first recall an important result by Carlen \cite{MR1105661} about
wave functions in $\Lambda_0$ (defined by \eqref{lll}):
\begin{lem}[E. A. Carlen, \cite{MR1105661}]
\label{lm:carlen}
For any $u\in \Lambda_0$, $\nabla u \in L^2$, and we have
\begin{equation}
  \label{eq:carlen}
  \int_{\R^2} |\nabla |u||^2 = \pi\int_{\R^2} |u|^2.
\end{equation}
\end{lem}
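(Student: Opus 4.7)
The plan is to reduce the identity to a direct computation using the Fock--Bargmann expansion of $u$. Writing $u(x) = f(z) e^{-\pi|z|^2/2}$ with $f$ entire and $u\in L^2(\R^2)$, the function $f$ expands as $f(z) = \sum_{n\geq 0} a_n z^n$, and by the angular orthogonality $\int z^a\bar z^b\, e^{-\pi|z|^2}\,dx = \delta_{a,b}\, a!/\pi^a$ one has $\int|u|^2\,dx = \sum_n |a_n|^2\, n!/\pi^n$. All orthogonality relations used below are instances of this identity.

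First I would establish the pointwise formula
\begin{equation*}
\bigl|\nabla|u|(x)\bigr|^2 = \bigl|f'(z) - \pi\bar z\, f(z)\bigr|^2 e^{-\pi|z|^2},
\quad\text{a.e.\ on } \R^2.
\end{equation*}
Away from the discrete set $Z = f^{-1}(0)$, this comes from $|\nabla|f||^2 = |f'|^2$ (equivalent to harmonicity of $\log|f|$ on $\R^2\setminus Z$, a direct consequence of the Cauchy--Riemann equations) combined with the product rule applied to $|u| = |f|\,e^{-\pi|z|^2/2}$ and the elementary identity $(x_1,x_2)\cdot \nabla|f|^2 = 2\re(z f'\bar f)$. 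The right-hand side is continuous on all of $\R^2$ and represents the square of the weak gradient of $|u|$.

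Second, I would integrate. Expanding $|f' - \pi\bar z f|^2 = |f'|^2 - 2\pi\re(zf'\bar f) + \pi^2|z|^2|f|^2$ and invoking the Fock--Bargmann orthogonality, each of the three resulting integrals reduces to a single index-$n$ sum of $|a_n|^2$ times an explicit factorial ratio:
\begin{align*}
\int|f'|^2 e^{-\pi|z|^2}\,dx &= \sum_n |a_n|^2\, \tfrac{n\cdot n!}{\pi^{n-1}}, \\
\int \re(z f'\bar f)\,e^{-\pi|z|^2}\,dx &= \sum_n n|a_n|^2\, \tfrac{n!}{\pi^n}, \\
\int |z|^2 |f|^2 e^{-\pi|z|^2}\,dx &= \sum_n |a_n|^2\, \tfrac{(n+1)!}{\pi^{n+1}}.
\end{align*}
Combining with the coefficients $1$, $-2\pi$, $\pi^2$ yields the arithmetical collapse $n - 2n + (n+1) = 1$, leaving $\sum |a_n|^2\, n!/\pi^{n-1} = \pi\int|u|^2$. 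Finiteness of this sum gives $\nabla|u|\in L^2(\R^2)$ as a by-product.

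The main delicate point is the regularity of $|u|$ at the zeros of $f$: one must check that the formal pointwise formula really represents the weak gradient squared of $|u|$ on all of $\R^2$. At a zero $z_0$ of order $k\geq 1$, $|f|\sim|z-z_0|^k$, so $|f|\in W^{1,2}_{\mathrm{loc}}$ and its weak gradient agrees a.e.\ with its classical gradient on $\R^2\setminus Z$, which passes to $|u|$ upon multiplication by the smooth Gaussian. The term-by-term integration can be justified either by applying the identity first to the partial sums $f_N = \sum_{n\leq N} a_n z^n$ (where every step is an elementary polar-coordinate Gaussian integral) and then passing to the limit via $\|u_N - u\|_{L^2}\to 0$, or by Fubini's theorem applied to the nonnegative series expansion of $|f' - \pi\bar z f|^2 e^{-\pi|z|^2}$.
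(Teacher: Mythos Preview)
The paper does not prove this lemma; it is quoted from Carlen \cite{MR1105661} and used as a black box. Your direct computation via the Fock--Bargmann expansion is correct and self-contained: the pointwise identity $|\nabla|u||^2 = |f' - \pi\bar z f|^2 e^{-\pi|z|^2}$ holds where $f\neq 0$, the three Gaussian moments are evaluated correctly, and the cancellation $n - 2n + (n+1) = 1$ collapses the sum to $\pi\|u\|_{L^2}^2$. The discussion of regularity at the zeros of $f$ is adequate (the zero set is discrete and $|z-z_0|^k\in W^{1,\infty}_{\mathrm{loc}}$ for $k\geq 1$), and the partial-sum argument cleanly justifies the term-by-term integration. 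This is essentially the elementary verification behind Carlen's identity in the special case needed here.

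One remark: the lemma as printed also asserts $\nabla u\in L^2$, which you do not address---you establish $\nabla|u|\in L^2$. In fact $\nabla u\in L^2$ is \emph{not} automatic for $u\in\Lambda_0$: since $\partial_{\bar z}u = -\tfrac{\pi}{2}zu$, this would require $|z|u\in L^2$, which fails for coefficients such as $|a_n|^2 = \pi^n/(n!\,(n+1)^2)$. This is almost certainly a typo for $\nabla|u|$ in the paper's statement; only the identity \eqref{eq:carlen} is invoked downstream (in the proof of Lemma~\ref{lm:binf-fort}), and your argument covers exactly that.
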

\begin{oss}
  {\rm The result of Carlen is actually much more general than the one we
  cite here, but the special case \eqref{eq:carlen} is the only thing we
  need.}
\end{oss}
Lemma~\ref{lm:carlen} implies the following decomposition of the energy
in $\Lambda_0$:
\begin{lem}
  \label{lm:binf-fort}
Let $u\in \Lambda_0$ be such that $\|u\|_{L^2} = 1.$ Then, we have
\begin{eqnarray}
  \label{eq:decomp-fort}
  E(u) &=& - \frac{\kappa^2}{8\pi} + \frac{\kappa^2}2
  \left(\frac 1 {4\pi^2} \int_{\R^2} \left|\partial_2 |u|\right|^2 +
    \int_{\R^2} x_2^2 |u|^2\right) \nonumber \\
&&+ \frac{\kappa^2}{8\pi^2} \int_{\R^2} \left|\partial_1
|u|\right|^2 + \frac{\ep^2}2  \int_{\R^2} x_1^2 |u|^2 + \frac{g_0}2
\int_{\R^2} |u|^4.
\end{eqnarray}
\end{lem}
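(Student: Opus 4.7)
The plan is to derive \eqref{eq:decomp-fort} as a purely algebraic rearrangement of the defining formula \eqref{eq:nrjLLL2} for $E(u)$, using Carlen's identity (Lemma~\ref{lm:carlen}) as the only substantive input. First, since $u\in\Lambda_0$ and $\|u\|_{L^2(\R^2)}=1$, Lemma~\ref{lm:carlen} yields
$$
\int_{\R^2}\bigl|\partial_1|u|\bigr|^2+\int_{\R^2}\bigl|\partial_2|u|\bigr|^2
=\int_{\R^2}\bigl|\nabla|u|\bigr|^2=\pi.
$$
Multiplying this identity by $\kappa^2/(8\pi^2)$ and rearranging produces the null expression
$$
-\frac{\kappa^2}{8\pi}+\frac{\kappa^2}{8\pi^2}\int_{\R^2}\bigl|\partial_1|u|\bigr|^2
+\frac{\kappa^2}{8\pi^2}\int_{\R^2}\bigl|\partial_2|u|\bigr|^2=0,
$$
which may be added freely to $E(u)$.

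Next I will insert this zero into \eqref{eq:nrjLLL2} and regroup. The contribution $\frac{\kappa^2}{8\pi^2}\int|\partial_2|u||^2$, which equals $\frac{\kappa^2}{2}\cdot\frac{1}{4\pi^2}\int|\partial_2|u||^2$, combines with the $\frac{\kappa^2}{2}\int x_2^2|u|^2$ term from $E(u)$ to produce the harmonic-oscillator-type parenthesis $\frac{\kappa^2}{2}\bigl(\frac{1}{4\pi^2}\int|\partial_2|u||^2+\int x_2^2|u|^2\bigr)$ appearing in \eqref{eq:decomp-fort}. The remaining pieces—the constant $-\kappa^2/(8\pi)$, the term $\frac{\kappa^2}{8\pi^2}\int|\partial_1|u||^2$, the $x_1^2$ term, and the $|u|^4$ term—are already in the precise form required by the statement.

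There is no substantive obstacle, since Carlen's identity is cited from \cite{MR1105661}. The interest of the decomposition is not in its derivation but in what it enables: the parenthesis $\frac{1}{4\pi^2}\int|\partial_2|u||^2+\int x_2^2|u|^2$ is the quadratic form of the 1D harmonic oscillator in $x_2$, whose infimum over unit-norm functions is $\frac{1}{2\pi}$, realized by the Gaussian $\sqrt{2}\,e^{-\pi x_2^2}$. Combined with the fact that the other four terms on the right-hand side of \eqref{eq:decomp-fort} are manifestly non-negative, \eqref{eq:decomp-fort} will immediately give the lower bound $I(\ep,\kappa)\ge\kappa^2/(8\pi)$ announced in Theorem~\ref{th:cv-fort}, and will simultaneously pinpoint a Gaussian profile in $x_2$ as the only way to approach that lower bound.
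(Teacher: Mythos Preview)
Your proof is correct and follows essentially the same route as the paper: both add and subtract $\kappa^2/(8\pi)$ in $E(u)$ and use Carlen's identity $\int|\nabla|u||^2=\pi$ to rewrite the added $\kappa^2/(8\pi)$ as $\frac{\kappa^2}{8\pi^2}\int|\partial_1|u||^2+\frac{\kappa^2}{8\pi^2}\int|\partial_2|u||^2$, then regroup. Your additional remarks on the harmonic-oscillator lower bound mirror the paper's own comments following the lemma.
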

\begin{proof}
  We write
\begin{equation}\label{eq:decomp-fort-2}
E(u) =  - \frac{\kappa^2}{8\pi} + \frac{\kappa^2}{8\pi} +
\frac{\kappa^2}2 \int_{\R^2} x_2^2 |u|^2+ \frac{\ep^2}2  \int_{\R^2}
x_1^2 |u|^2 + \frac{g_0}2 \int_{\R^2} |u|^4.
\end{equation}
Hence, applying \eqref{eq:carlen}, we find \eqref{eq:decomp-fort}.
\end{proof}
Note that the first line is easily seen to be bounded from below by the
first eigenvalue of the corresponding harmonic oscillator, namely
$\kappa^2/(4\pi)$. Hence, \eqref{eq:decomp-fort} readily implies
\begin{equation}\label{eq:decomp-fort-3}
E(u) \geq \frac{\kappa^2}{8\pi}.
\end{equation}
This explains why we chose the constant $\frac{\kappa^2}{8\pi}$ in the
decomposition \eqref{eq:decomp-fort-2}: it is the constant which gives
the highest lower bound in \eqref{eq:decomp-fort-3}.
\subsection{Proof of Theorem \ref{th:cv-fort}}
\label{ssec:fin-fort}
\noindent{\em }\begin{proof}[Step 1: upper bound for the energy]
We
  pick  a real-valued function $p$ such that
$$p\in C^2(\R), \supp(p) \subset (-T,T), \quad \int_{\R} p^2 = 1,$$
and define $u$ by \eqref{eq:ftest-fort}, where $\rho$ is defined by
\eqref{eq:alpha-fort}, with
\begin{equation}\label{eq:dfR}
R= \ep^{-2/3}.
\end{equation}
Hence, setting
$v = \frac 1 {\|u\|_{L^2}} u,$
we know by Lemma~\ref{lm:ftest-fort-1} that $v$ is a test function for
$I(\ep,\kappa)$. Hence,
\begin{equation}
\label{eq:bsup-fort-1}
I(\ep,\kappa) \leq E(v).
\end{equation}
Next, we set
$$v_1 =2^{1/4} \rho(x_1) e^{-\pi x_2^2 + i\pi x_1 x_2}
   +  i 2^{1/4} x_2 \rho'(x_1) e^{-\pi x_2^2 + i\pi x_1 x_2} ,$$
and point out  that, applying Lemma~\ref{lm:ftest-fort-2} with $r=2$,
\begin{multline*}
\|u\|_{L^2}^2 = \|v_1\|_{L^2}^2 + O\left(\ep^{4/3}\right) = 1+ 2^{1/2} \int_{\R}
|\rho'(x_1)|^2 \int_{\R}x_2^2 e^{-2\pi x_2^2} dx_2 + O\left(\ep^{4/3}\right) \\= 1 + C
\ep^{4/3} \int_{\R} p'^2 + O\left(\ep^{4/3}\right),
\end{multline*}
where we have used that the two terms defining $v_1$ are orthogonal to
each other. Hence,
$$
 \|u\|_{L^2} = 1 + O\left(\ep^{4/3}\right),
$$
where the term $O\left(\ep^{4/3} \right)$ depends only on $\|p'\|_{L^2}$,
$\|p''\|_{L^\infty}$ and $T.$
According to \eqref{eq:bsup-fort-1} and the definition of $v$, we thus
have
\begin{equation}
  \label{eq:bsup-fort-2}
 I(\ep,\kappa) \leq E(u)\left[1 + O\left(\ep^{4/3}\right)\right],
\end{equation}
where the term $O\left(\ep^{4/3} \right)$ is independent of $\kappa$. We now compute the
energy of $u$: applying Lemma~\ref{lm:ftest-fort-3}, we have
$$
  \left|\int_{\R^2} x_1^2 |u|^2 - \int_{\R^2} x_1^2 |v_1|^2 \right| \leq
  C \ep^{2/3} \left(\|x_1 u\|_{L^2} + \|x_1 v_1\|_{L^2}\right) \leq
  C\ep^{2/3} \left( 2\|x_1 v_1\|_{L^2} + C\ep^{2/3} \right).
$$
Moreover, we have, since $\rho$ is real-valued,
$$\int_{\R^2} x_1^2 |v_1|^2 dx = \int_{\R} x_1^2 \rho(x_1)^2 dx_1 +
\frac{1}{4\pi} \int_{\R} x_1^2 \rho'(x_1)^2 dx_1 = \ep^{-4/3}
\int_{\R} t^2 p(t)^2 dt + O\left(1\right).
$$
Hence, we have
\begin{equation}
  \label{eq:bsup-fort-3}
  \int_{\R^2} x_1^2 |u|^2 = \ep^{-4/3} \int_{\R} t^2 p(t)^2 dt +
  O\left(1\right).
\end{equation}
The same kind of argument allows us to prove that
\begin{equation}
  \label{eq:bsup-fort-4}
  \int_{\R^2} x_2^2 |u|^2 = \int_{\R^2} x_2^2 v_1^2 +
  O\left(\ep^{4/3}\right) = \frac1{4\pi} +O\left(\ep^{4/3}\right).
\end{equation}
Next, we apply Lemma~\ref{lm:ftest-fort-2} with
$r=4$:
$$\left|\int_{\R^2} |u|^4 - \int_{\R^2} |v_1|^4\right| \leq 2 \| u - v_1
\|_{L^4} \left(\|u\|_{L^4}^3 + \|v_1\|_{L^4}^3 \right) \leq C \ep^{3/2}
\left(\|u\|_{L^4}^3 + \|v_1\|_{L^4}^3 \right).$$
Moreover, we have  $\|u\|_{L^4} \leq \|v_1\|_{L^4} + C \ep^{2/3},$ hence
$$\left|\int_{\R^2} |u|^4 - \int_{\R^2} |v_1|^4\right| \leq C
\ep^{3/2}\|v_1\|_{L^4}^3.$$
We also have
\begin{eqnarray*}
\int_{\R^2} |v_1|^4 &=& \int_{R^2} 2\rho(x_1)^4 e^{-4\pi x_2^2} +
4 \rho(x_1)^2 \rho'(x_1)^2 x_2^2 e^{-4\pi x_2^2} + 2 x_2^4
\rho'(x_1)^4 e^{-4\pi x_2^2} \\
&=& \ep^{2/3} \int_{\R} p^4 + \ep^2 \frac{1}{4\pi} \int_{\R} p(t)^2
p'(t)^2 dt + \ep^{10/3}\frac{3}{64 \pi^2} \int_{\R} p'^4.
\end{eqnarray*}
Hence, we obtain
\begin{equation}
  \label{eq:bsup-fort-5}
  \int_{\R^2} |u|^4 = \ep^{2/3} \int_{\R} p(t)^4 dt + O\left(\ep^2\right).
\end{equation}
Collecting \eqref{eq:bsup-fort-3}, \eqref{eq:bsup-fort-4} and
\eqref{eq:bsup-fort-5}, we thus have
$$E(u) = \frac{\kappa^2}{8\pi} + O\left(\kappa^2 \ep^{4/3}\right) +
\ep^{2/3}\left( \int_{\R} \frac12 t^2 p(t)^2dt +\frac{g_0}2
\int_{\R}
  p(t)^4dt\right)+ O\left(\ep^{2}\right). $$
Recalling \eqref{eq:bsup-fort-2}, this implies
$$\frac{I(\ep,\kappa) - \frac{\kappa^2}{8\pi}}{\ep^{2/3}} \leq \frac12\int_{\R} t^2
p(t)^2dt +\frac{g_0}2 \int_{\R} p(t)^4dt +O\left(\kappa^2
\ep^{2/3}\right) + O\left(\ep^{4/3}\right).$$ As a conclusion, we
have
$$\limsup_{\ep \rightarrow 0, \frac{\ep^{1/3}}{\kappa} \to 0} \frac{I(\ep,\kappa) -
  \frac{\kappa^2}{8\pi}}{\ep^{2/3}}  \leq \frac12 \int_{\R} t^2
p(t)^2dt +\frac{g_0}2 \int_{\R} p(t)^4dt,$$ for any real-valued
$p\in C^2(\R)$ having compact support, and such that
$\|p\|_{L^2}=1$. A density argument allows to prove that
$$\limsup_{\ep \rightarrow 0, \frac{\ep^{1/3}}{\kappa} \to 0} \frac{I(\ep,\kappa) -
  \frac{\kappa^2}{8\pi}}{\ep^{2/3}} \leq J,$$
where $J$ is defined by \eqref{eq:pb1d}. Thus, we get
$$\frac{I(\ep,\kappa) - \frac{\kappa^2}{8\pi}}{\ep^{2/3}} = J +c
\left(\ep, \frac{\ep^{1/3}}{\kappa} \right),$$
with $\lim_{\begin{array}{c}{\scriptscriptstyle (t,s)\rightarrow(0,0)}
    \vspace{-4pt} \\ {\scriptscriptstyle t,s>0}\end{array}} c(t,s) = 0.$
\par
\noindent{\em Step 2: convergence of minimizers.}
Let $u$ be a minimizer of $I(\ep,\kappa)$. Then, according to the first
step, we have
$$E(u) \leq \frac{\kappa^2}{8\pi} + J\ep^{2/3} +
\ep^{2/3}c \left(\ep, \frac{\ep^{1/3}}{\kappa} \right),$$
with
$\lim_{\begin{array}{c}{\scriptscriptstyle (t,s)\rightarrow(0,0)}
    \vspace{-4pt} \\ {\scriptscriptstyle t,s>0}\end{array}} c(t,s) = 0.$
Hence, applying Lemma~\ref{lm:binf-fort}, we obtain
\begin{multline}
\label{eq:binf-fort-1}
    \frac{\kappa^2}2
  \left(\frac 1 {4\pi^2} \int_{\R^2} \left|\partial_2 |u|\right|^2 +
    \int_{\R^2} x_2^2 |u|^2\right) \\
+ \frac{\kappa^2}{8\pi^2} \int_{\R^2} \left|\partial_1 |u|\right|^2
+ \frac{\ep^2}2  \int_{\R^2} x_1^2 |u|^2 + \frac{g_0}2 \int_{\R^2}
|u|^4 \leq \frac{\kappa^2}{4\pi} + J\ep^{2/3} + \ep^{2/3}c
\left(\ep, \frac{\ep^{1/3}}{\kappa} \right).
\end{multline}
We set
\begin{equation}
  \label{eq:v}
  v(x_1,x_2) = \frac 1 {\ep^{1/3}} \left| u\left( \frac{x_1}{\ep^{2/3}},
  x_2\right)\right|,
\end{equation}
so that $\|v\|_{L^2} = \|u\|_{L^2} = 1$, $v\geq 0$, and \eqref{eq:binf-fort-1} becomes
\begin{multline}
\label{eq:binf-fort-2}
    \frac{\kappa^2}2
  \left(\frac 1 {4\pi^2} \int_{\R^2} \left|\partial_2 v\right|^2 +
    \int_{\R^2} x_2^2 v^2\right) \\
+ \frac{\kappa^2 \ep^{4/3}}{8\pi^2} \int_{\R^2} \left|\partial_1
v\right|^2 + \frac{\ep^{2/3}}2 \left( \int_{\R^2} x_1^2 v^2 + g_0
\int_{\R^2} v^4\right) \leq \frac{\kappa^2}{4\pi} + J\ep^{2/3} +
\ep^{2/3}c \left(\ep, \frac{\ep^{1/3}}{\kappa} \right).
\end{multline}
This implies that
\begin{equation}
  \label{eq:cv01}
  \int_{\R^2} \left|\partial_2 v\right|^2 +
    \int_{\R^2} x_2^2 v^2 \leq C,
\end{equation}
where $C$ does not depend on $(\ep,\kappa)$. Moreover,
since the first eigenvalue of the operator $-\frac 1 {4\pi^2}
\frac{d^2}{dx_2^2} + x_2^2$ is equal to $1/(2\pi)$,
\eqref{eq:binf-fort-2} implies that
\begin{equation}
  \label{eq:cv02}
 \int_{\R^2} x_1^2 v^2 + g_0 \int_{\R^2} v^4 \leq C,
\end{equation}
where $C$ does not depend on $(\ep,\kappa)$. Hence, up to extracting a
subsequence, $v$ converges weakly in $L^4$ and weakly in $L^2$ to some
limit $v_0 \geq 0$. Using \eqref{eq:cv01} and \eqref{eq:cv02}, we see that
$$\int_{\R^2} |x|^2 v^2 \leq C,$$
hence $v$ converges strongly in $L^2$. Since in addition $\partial_2 v$
converges weakly in $L^2$, we have:
\begin{equation}
  \label{eq:cv03}
\left\{\begin{array}{l}
\displaystyle  v \mathop{\longrightarrow}_{(\ep,\ep^{1/3}\kappa^{-1})\to (0,0)}  v_0 \text{ strongly in } L^2(\R^2),
  \\
\displaystyle x_1 v \mathop{\longrightarrow}_{(\ep,\ep^{1/3}\kappa^{-1})\to (0,0)} x_1 v_0 \text{ weakly in } L^2(\R^2),
  \\
\displaystyle v  \mathop{\longrightarrow}_{(\ep,\ep^{1/3}\kappa^{-1})\to (0,0)} v_0 \text{ weakly in }
  L^4(\R^2), \\
\displaystyle
\partial_2 v \mathop{\longrightarrow}_{(\ep,\ep^{1/3}\kappa^{-1})\to (0,0)} \partial_2 v_0 \text{
  weakly in } L^2(\R^2).
\end{array}\right.
\end{equation}
Hence, we may pass to the liminf in the two first terms of
\eqref{eq:binf-fort-2}, getting
\begin{equation}
  \label{eq:cv04}
 \frac 1 {4\pi^2} \int_{\R^2} \left|\partial_2 v_0\right|^2 +
    \int_{\R^2} x_2^2 v_0^2\leq \liminf_{(\ep,\ep^{1/3}\kappa^{-1})\to (0,0)} \left(\frac 1 {4\pi^2} \int_{\R^2} \left|\partial_2 v\right|^2 +
    \int_{\R^2} x_2^2 v^2 \right) \leq \frac1{2\pi}.
\end{equation}
We use that the first eigenvalue of the operator $-\frac 1 {4\pi^2}
\frac{d^2}{dx_2^2} + x_2^2$ on $L^2(\R)$ is equal to $1/(2\pi)$, is simple, with an
eigenvector equal to $2^{1/4} \exp(-\pi x_2^2)$. Thus,
\begin{equation}
  \label{eq:cv05}
  v_0(x_1,x_2) = \xi(x_1) 2^{1/4} e^{-\pi x_2^2},
\end{equation}
with $\xi\geq 0$.
Next, \eqref{eq:binf-fort-2} and \eqref{eq:cv03} also imply
\begin{equation}
  \label{eq:cv06}
 \frac12 \int_{\R^2} x_1^2 v_0^2 + \frac {g_0} 2 \int_{\R^2} v_0^4 \leq \liminf_{\ep \to 0, \frac{\ep^{1/3}}{\kappa}\to 0}\left(
 \frac12 \int_{\R^2} x_1^2 v^2 + \frac {g_0}{ 2 }\int_{\R^2} v^4 \right) \leq J.
\end{equation}
Using \eqref{eq:cv05}, we infer
$$\frac12 \int_{\R} x_1^2 \xi^2 + \frac {g_0} 2 \int_\R \xi^4 \leq J.$$
Hence, recalling that, in view of \eqref{eq:cv03} and
\eqref{eq:cv05}, we have $\int \xi^2 = 1,$ the definition of $J$ implies
that $\xi$ is the unique non-negative minimizer of \eqref{eq:pb1d}.
This proves \eqref{eq:ulimfort}, with strong convergence in $L^2$
and weak convergence in $L^4$. Moreover, using \eqref{eq:cv06} again
and the fact that $\xi$ is a minimizer of \eqref{eq:pb1d}, we have
$$\lim_{(\ep,\ep^{1/3}\kappa^{-1})\to (0,0)} \left(\int_{\R^2} x_1^2(v_0^2 - v^2) + g_0\int_{\R^2}\left( v_0^4 -
    v^4 \right) \right) = 0.$$
Next, using the explicit formula giving $v_0$, a simple computation gives
$$\int_{\R^2} x_1^2(v^2 - v_0^2) + g_0(v^4 - v_0^4) \geq g\int_{\R^2}
 \left(v^2 - v_0^2\right)^2,$$
hence $v^2$ converges to $v_0^2$ strongly in $L^2(\R^2)$. Thus,
$$\int_{\R^2} v^4 \longrightarrow \int_{\R^2} v_0^4.$$
The space $L^4(\R^2)$ being uniformly convex, this implies strong
convergence in $L^4$, hence \eqref{eq:ulimfort}.
\par
\noindent{\em Step 3: lower bound for the energy.}
Using Lemma~\ref{lm:binf-fort}, we have
$$E(u) \geq \frac{\kappa^2}{4\pi} + \frac{\ep^{2/3}}2 \left(\int_{\R^2}
x_1^2 v^2 +  g_0 \int_{\R^2} v^4\right).$$ In addition, we already
proved \eqref{eq:ulimfort}, which implies
$$\frac12 \int_{\R^2} x_1^2 v^2 + \frac {g_0} 2 \int_{\R^2} v^4 \longrightarrow
\frac12 \int_{\R^2} x_1^2 v_0^2 + \frac {g_0} 2 \int_{\R^2} v_0^4 =
J,$$ which implies the lower bound for the energy.
\end{proof}
\section{Appendix}
\subsection{Glossary}
\subsubsection{The harmonic oscillator} The operator
\begin{equation}\label{12.harmonic}
\sum_{1\le j\le n} \pi (\xi_{j}^2+\lambda_{j}^2 x_{j}^2)^w=
\sum_{1\le j\le n} \pi (D_{x_{j}}^2+\lambda_{j}^2 x_{j}^2),\quad \lambda_{j}>0,\quad D_{x_{j}}=\frac{1}{2i\pi}\p_{x_{j}},
\quad
\end{equation}
has a discrete spectrum
\begin{equation}\label{12.specharm}
\frac12{\sum_{1\le j\le n}\lambda_{j}}+
\Bigl\{\sum_{1\le j\le n}\alpha_{j}\lambda_{j}\Bigr\}_{(\alpha_{1},\dots,\alpha_{n})\in\N^n
},
\end{equation}
and its ground state is one-dimensional generated by the Gaussian function
\begin{equation}\label{12.groundho}
\varphi_{\lambda}(x)=2^{n/4} \prod_{1\le j\le n} \lambda_{j}^{1/4}e^{-\pi \lambda_{j} x_{j}^2}.
\end{equation}
\subsubsection{Degenerate harmonic oscillator}
Let $r\in\{1,\dots, n\}$.
Using the identity
\begin{equation}\poscal{H_{r} u}{u}=
\sum_{1\le j\le r}  \poscal{(D_{x_{j}}^2+\lambda_{j}^2 x_{j}^2) u}{u}=
\sum_{1\le j\le r}  \norm{(D_{x_{j}}-i\lambda_{j}x_{j})u}_{L^2}^2+\frac{\lambda_{j}}{2\pi}\norm{u}^2_{L^2},
\end{equation}
we can define the ground state $E_{r}$
of the operator $H_{r}$ as
\begin{multline}
E_{r}=L^2(\R^n)\cap_{1\le j\le r}\ker(D_{x_{j}}-i\lambda_{j}x_{j})
\\=\{\varphi_{(\lambda_{1},\dots, \lambda_{r})}(x_{1},\dots,x_{r})\otimes v(x_{r+1},\dots, x_{n})\}
_{v\in L^2(\R^{n-r})}.
\end{multline}
The bottom of the spectrum of $\pi H_{r}$ is
$\frac{1}{2}\sum_{1\le j\le r}\lambda_{j}$.
\subsection{Notations  for the calculations of section \ref{sec.effective}}
\label{notations}
{\small
\begin{align}
\nu^2+\omega^2&\le 1,\quad \nu^2+\omega^2+\ep^2=1, \\
\alpha&=\sqrt{\nu^4+4\omega^2}=
\sqrt{4\omega^2+(1-\omega^2-\ep^2)^2}\quad\text{            (if            $\nu=0, \alpha=2\omega$).}
\\
\mu_{1}^2&= 1+\omega^2-\alpha=\frac{(1+\omega^2)^2-\alpha^2}{1+\omega^2+\alpha}=\frac{(1-\omega^2)^2-\nu^4}{\mu_{2}^2}=\frac{2\nu^2\ep^2+\ep^4}{\mu_{2}^2}\\
\mu_{2}^2&=1+\omega^2+\alpha\quad\text{             (if            $\nu=0, \mu_{2}=1+\omega$).}
\end{align}
\begin{oss}\label{12.rem.obvious}{\rm
If            $\nu=0$, $\mu_{1}=O(\ep^2)$ and if
$\nu\not=0$, $\mu_{1}=O(\ep)$.
Moreover, for $\nu^2+\omega^2\le 1$, $\mu_{2}^2\in[1,4]$ and for
$\nu^2+\omega^2=1$, $\mu_{2}^2\in[2,4]$:
we have indeed
\begin{equation}\label{12.jkl}
1\le 1+\omega^2+(\nu^4+4\omega^2)^{1/2}\le 4
\end{equation}
since
$
 \nu^4+10\omega^2\le (1-\omega^2)^2+10\omega^2=8\omega^2+1+\omega^4\le 9+\omega^4
$,
implying
$
(3-\omega^2)^2\ge \nu^4+4\omega^2
$
and \eqref{12.jkl}. If $\nu^2+\omega^2=1$,
we have
$(1-\omega^2)^2=\nu^4\le \nu^4+4\omega^2\Longrightarrow
2\le 1+\omega^2+(\nu^4+4\omega^2)^{1/2}.
$}
\end{oss}
We define the following set of parameters,
\begin{align}
&\beta_{1}=\frac{2\omega \mu_{1}}{\alpha-2\omega^2+\nu^2}=\frac{\alpha-2\omega^2-\nu^2}{2\omega \mu_{1}}\  \text{\tiny\               since        $ (\alpha-2\omega^2)^2-\nu^4=4\omega^2+4\omega^4-4\omega^2\alpha=4\omega^2\mu_{1}^2$ },\\
&\beta_{2}=\frac{2\omega \mu_{2}}{\alpha+2\omega^2+\nu^2}
=\frac{\alpha+2\omega^2-\nu^2}{2\omega \mu_{2}}\
 \text{\ \tiny               since        $ (\alpha+2\omega^2)^2-\nu^4=4\omega^2+4\omega^4+4\omega^2\alpha=4\omega^2\mu_{2}^2$ },\\
&\gamma=\frac{2\alpha}{\omega},\\
&\lambda_{1}^2=\frac{\mu_{1}}{\mu_{1}+\beta_{1}\beta_{2}\mu_{2}}=\frac{1}{1+\frac{\beta_{1}\beta_{2}\mu_{2}}{\mu_{1}}}=\frac{1}{1+\frac{\alpha+2\omega^2-\nu^2}{\alpha-2\omega^2+\nu^2}}=\frac{\alpha-2\omega^2+\nu^2}{2\alpha},\\
&\lambda_{2}^2=\frac{\mu_{2}}{\mu_{2}+\beta_{1}\beta_{2}\mu_{1}}
=\frac{1}{1+\frac{\beta_{1}\beta_{2}\mu_{1}}{\mu_{2}}}=
\frac{1}{1+\frac{\alpha-2\omega^2-\nu^2}{\alpha+2\omega^2+\nu^2}}=\frac{\alpha+2\omega^2+\nu^2}{2\alpha},
\\
&\text{             and we have
\quad }\lambda_{1}^2+\lambda_{2}^2=1+\frac{\nu^2}{\alpha},\qquad
\lambda_{1}^2\lambda_{2}^2=\frac{(\alpha+\nu^2)^2-4\omega^4}{4\alpha^2},
\\&
d=\frac{\gamma \lambda_{1}\lambda_{2}}{2},\quad c=\frac{\lambda_{1}^2+\lambda_{2}^2}
{2\lambda_{1}\lambda_{2}}\quad
\text{             so that }\quad \scriptstyle
cd=\frac{2\alpha(1+\nu^2/\alpha)}{4\omega}=\frac{\alpha+\nu^2}{2\omega}.
\end{align}
We have also
\begin{align*}
\frac{2\mu_{1}}{\gamma \beta_{1}}=\frac{\alpha-2\omega^2+\nu^2}{\omega\gamma}=
\frac{\alpha-2\omega^2+\nu^2}{2\alpha}=\lambda_{1}^2,
\\
\frac{2\mu_{2}}{\gamma \beta_{2}}=\frac{\alpha+2\omega^2+\nu^2}{\omega\gamma}=
\frac{\alpha+2\omega^2+\nu^2}{2\alpha}=\lambda_{2}^2,
\end{align*}
and
\begin{align*}
&c\lambda_{2}=\frac{\lambda_{1}^2+\lambda_{2}^2}{2\lambda_{1}}=(1+\nu^2\alpha^{-1})2^{-1}
\frac{2^{1/2}\alpha^{1/2}}{\sqrt{\alpha-2\omega^2+\nu^2}}
\\&=
(1+\nu^2\alpha^{-1})2^{-1}
\frac{2^{1/2}\alpha^{1/2}
\sqrt{\alpha+2\omega^2-\nu^2}
}{\sqrt{\alpha-2\omega^2+\nu^2}\sqrt{\alpha+2\omega^2-\nu^2}}
\\&
=(1+\nu^2\alpha^{-1})2^{-1}2^{1/2}\alpha^{1/2}
\frac{\sqrt{\alpha+2\omega^2-\nu^2}}{
\sqrt{\alpha^2-(2\omega^2-\nu^2)^2}}
\\&=(1+\nu^2\alpha^{-1})2^{-1/2}\alpha^{1/2}\sqrt{\alpha+2\omega^2-\nu^2}
(2\omega)^{-1}(2\nu^2+\ep^2)^{-1/2}.
\end{align*}
Moreover, we have
\begin{equation}\label{mat11}
c\lambda_{2}=2^{-3/2}(\alpha^{1/2}+\nu^2\alpha^{-1/2})\omega^{-1}
\sqrt{\frac{\alpha+2\omega^2-\nu^2}{2\nu^2+\ep^2}}
\quad\text{ \tiny            (if            $\nu=0, c\lambda_{2}=2^{-1/2}(1-\omega)^{-1/2}$),}
\end{equation}
$$
\lambda_{2}d^{-1}=\frac{c\lambda_{2}}{cd}=
2^{-3/2}(\alpha^{1/2}+\nu^2\alpha^{-1/2})\omega^{-1}
\sqrt{\frac{\alpha+2\omega^2-\nu^2}{2\nu^2+\ep^2}}\frac{2\omega}{\alpha+\nu^2},
$$
$$
\lambda_{2}d^{-1}= 2^{-1/2}(\alpha^{1/2}+\nu^2\alpha^{-1/2})
\sqrt{\frac{\alpha+2\omega^2-\nu^2}{2\nu^2+\ep^2}}(\alpha+\nu^2)^{-1},
$$
\begin{equation}
\lambda_{2}d^{-1}= (2\alpha)^{-1/2}
\sqrt{\frac{\alpha+2\omega^2-\nu^2}{2\nu^2+\ep^2}}
\quad\text{   \tiny          (if            $\nu=0, \lambda_{2}d^{-1}=2^{-1/2}(1-\omega)^{-1/2}$),}
\end{equation}
{\footnotesize\begin{equation}
c\lambda_{1}=\frac{\lambda_{1}^2+\lambda_{2}^2}{2\lambda_{2}}
=(1+\alpha^{-1}\nu^2)2^{-1/2}\alpha^{1/2}(\alpha+2\omega^2+\nu^2)^{-1/2}
\quad\text{   \tiny             (if            $\nu=0, c\lambda_{1}=
2^{-1/2}(1+\omega)^{-1/2}$),}
\end{equation}}
$$\lambda_{1}d^{-1}=\lambda_{1}c(cd)^{-1}
=
(1+\alpha^{-1}\nu^2)2^{-1/2}\alpha^{1/2}(\alpha+2\omega^2+\nu^2)^{-1/2}
2\omega(\alpha+\nu^2)^{-1},
$$
\begin{equation}
\lambda_{1}d^{-1}=2^{1/2}\alpha^{-1/2}\omega(\alpha+2\omega^2+\nu^2)^{-1/2}
\quad\text{             (if            $\nu=0, \lambda_{1}d^{-1}=2^{-1/2}
(1+\omega)^{-1/2}),
$}
\end{equation}
$$
\lambda_{1} cd =(\alpha+\nu^2)2^{-1}\omega^{-1}(\alpha-2\omega^2+\nu^2)^{1/2}2^{-1/2}\alpha^{-1/2},
$$
$$
\lambda_{1} cd =2^{-3/2}(\alpha+\nu^2)\omega^{-1}\alpha^{-1/2}(\alpha-2\omega^2+\nu^2)^{1/2}
\quad\text{           (if            $\nu=0, \lambda_{1} cd =2^{-1/2}
(1-\omega)^{1/2}),
$}
$$
$$
\frac{d}{\lambda_{2}}=\frac{\gamma \lambda_{1}}{2}=\alpha\omega^{-1}
(\alpha-2\omega^2+\nu^2)^{1/2}2^{-1/2}\alpha^{-1/2}
=2^{-1/2}\alpha^{1/2}\omega^{-1}(\alpha-2\omega^2+\nu^2)^{1/2},
$$
$$
\lambda_{1} cd- \frac{d}{\lambda_{2}}=
(\alpha-2\omega^2+\nu^2)^{1/2}
\bigl(
2^{-3/2}(\alpha+\nu^2)\omega^{-1}\alpha^{-1/2}
-2^{-1/2}\alpha^{1/2}\omega^{-1}
\bigr),
$$
$$
\lambda_{1} cd- \frac{d}{\lambda_{2}}=2^{-3/2}\omega^{-1}\alpha^{-1/2}
(\alpha-2\omega^2+\nu^2)^{1/2}
(\alpha+\nu^2-2\alpha),
$$
{\begin{equation}
\lambda_{1} cd- \frac{d}{\lambda_{2}}=
-2^{-3/2}\omega^{-1}\alpha^{-1/2}
(\alpha-2\omega^2+\nu^2)^{1/2}
(\alpha-\nu^2),
\end{equation}}
\text{  \footnotesize    (if            $\nu=0, \lambda_{1} cd- \frac{d}{\lambda_{2}}
=-2^{-1/2}
(1-\omega)^{-1/2}),
$}
\begin{equation}
\lambda_{1}=2^{-1/2}\alpha^{-1/2}(\alpha-2\omega^2+\nu^2)^{1/2}
\quad\text{             (if            $\nu=0, \lambda_{1}
=2^{-1/2}
(1-\omega)^{1/2}),
$}
\end{equation}
\begin{multline*}
\lambda_{2}cd -\frac d{\lambda_{1}}=\lambda_{1}^{-1}\lambda_{2}(\lambda_{1}cd-\frac{d}{\lambda_{2}})
\\=
-2^{-3/2}\omega^{-1}\alpha^{-1/2}
(\alpha-2\omega^2+\nu^2)^{1/2}
(\alpha-\nu^2)(\alpha+2\omega^2+\nu^2)^{1/2}(\alpha-2\omega^2+\nu^2)^{-1/2}
\\=
-2^{-3/2}\omega^{-1}\alpha^{-1/2}(\alpha-\nu^2)(\alpha+2\omega^2+\nu^2)^{1/2},
\end{multline*}
{\footnotesize\begin{equation}
\lambda_{2}cd -\frac d{\lambda_{1}}=-2^{-3/2}\omega^{-1}\alpha^{-1/2}(\alpha-\nu^2)(\alpha+2\omega^2+\nu^2)^{1/2}
\text{    \tiny        (if            $\nu=0,\lambda_{2}cd -\frac d{\lambda_{1}}
=-2^{-1/2}(1+\omega)^{1/2})
$}
\end{equation}}
\begin{equation}\label{mat44}
\lambda_{2}=2^{-1/2}\alpha^{-1/2}(\alpha+2\omega^2+\nu^2)^{1/2}
\text{             (if            $\nu=0,\lambda_{2}
=2^{-1/2}(1+\omega)^{1/2}),
$}
\end{equation}
\begin{equation}
\frac{\gamma\mu_{1}\beta_{1}}{2}=\frac{2\alpha}{\alpha+2\omega^2+\nu^2} \ep^2,\qquad
\frac{\gamma\mu_{1}}{2\beta_{1}}=\frac{4\alpha\omega(2\nu^2+\ep^2)}{
\alpha-\nu^2+2\omega^2}.
\end{equation}
}
\subsection{Some calculations}
\subsubsection{Proof of the lemma \ref{2.lem.keydiag}}\label{8.app.somec}
We have to calculate
\begin{align*}
&\widetilde Q=\chi^* Q \chi=
\chi^*  \begin{pmatrix}
1-\nu^2&0&0&-\omega\\
0&1+\nu^2&\omega&0\\
0&\omega&1&0\\
-\omega&0&0&1
\end{pmatrix}
\begin{pmatrix}
\lambda_{1}&0&0&-\frac{\lambda_{1}}{d}\\
0&\lambda_{2}&-\frac{\lambda_{2}}{d}&0\\
0&\frac{d}{\lambda_{1}}-\lambda_{2}cd&c\lambda_{2}&0\\
\frac{d}{\lambda_{2}}-\lambda_{1}cd&0&0&c\lambda_{1}\end{pmatrix}\\&=
\chi^*\scriptstyle
\begin{pmatrix}
\scriptstyle (1-\nu^2)\lambda_{1}-\frac{\omega d}{\lambda_{2}}+\lambda_{1}cd \omega&0&0& \scriptstyle
-\frac{\lambda_{1}(1-\nu^2)}{d}-c\lambda_{1}\omega\\
0&\scriptstyle (1+\nu^2)\lambda_{2}+\frac{\omega d}{\lambda_{1}}-\lambda_{2}cd \omega&
\scriptstyle  -\frac{(1+\nu^2)\lambda_{2}}{d}+\omega c \lambda_{2}&0\\
0& \scriptstyle \omega \lambda_{2}+\frac{d}{\lambda_{1}}-\lambda_{2}cd&
\scriptstyle -\frac{\omega \lambda_{2}}{d}+c\lambda_{2}&0\\
\scriptstyle -\omega \lambda_{1}+\frac{d}{\lambda_{2}}-\lambda_{1}cd
&0&0&
 \scriptstyle \frac{\omega \lambda_{1}}{d}+c\lambda_{1}
 \end{pmatrix}\\
 &=
 \begin{pmatrix}
 \lambda_{1}&
0&
0&
 \frac{d}{\lambda_{2}}-\lambda_{1}cd\\
 0&\lambda_{2}&\scriptstyle\frac{d}{\lambda_{1}}-\lambda_{2}cd&
 0\\
0&-\frac{\lambda_{2}}{d}& c\lambda_{2}&0\\
 -\frac{\lambda_{1}}{d}&0&0& c\lambda_{1}
 \end{pmatrix}
 \\& \times
 \begin{pmatrix}
\scriptstyle (1-\nu^2)\lambda_{1}-\frac{\omega d}{\lambda_{2}}+\lambda_{1}cd \omega&0&0& \scriptstyle
-\frac{\lambda_{1}(1-\nu^2)}{d}-c\lambda_{1}\omega\\
0&\scriptstyle (1+\nu^2)\lambda_{2}+\frac{\omega d}{\lambda_{1}}-\lambda_{2}cd \omega&
\scriptstyle  -\frac{(1+\nu^2)\lambda_{2}}{d}+\omega c \lambda_{2}&0\\
0& \scriptstyle \omega \lambda_{2}+\frac{d}{\lambda_{1}}-\lambda_{2}cd&
\scriptstyle -\frac{\omega \lambda_{2}}{d}+c\lambda_{2}&0\\
\scriptstyle -\omega \lambda_{1}+\frac{d}{\lambda_{2}}-\lambda_{1}cd
&0&0&
 \scriptstyle \frac{\omega \lambda_{1}}{d}+c\lambda_{1}
 \end{pmatrix}.
\end{align*}
We get easily
$
\tilde q_{12}=\tilde q_{13}=0=\tilde q_{24}=\tilde q_{34}.
$
To prove that the symmetric matrix $\tilde Q$ is diagonal,
it is thus sufficient to prove that
$\tilde q_{14}=0=\tilde q_{23}$.
We have
\begin{align*}
\tilde q_{14}&=-\frac{\lambda_{1}^2}{d}(1-\nu^2)-\omega c\lambda_{1}^2+\omega\frac{\lambda_{1}}{\lambda_{2}}+\frac{cd \lambda_{1}}{\lambda_{2}}-\lambda_{1}^2 c\omega
-c^2\lambda_{1}^2 d\\
&=\frac{\lambda_{1}^2}{d}\Big[
-1+\nu^2-2\omega cd+\frac{\omega d}{\lambda_{2}\lambda_{1}}+\frac{cd^2}{\lambda_{2}\lambda_{1}}-c^2 d^2
\Bigr]
\\&=
\frac{\lambda_{1}^2}{d}\Big[
-1+\nu^2-\alpha -\nu^2+\alpha+\frac{(\alpha+\nu^2)}{2\omega}\frac{\alpha}
{\omega}-\frac{(\alpha+\nu^2)^2}{4\omega^2}
\Bigr]
\\&=
\frac{\lambda_{1}^2}{d\omega^2}\Big[
-\omega^2+\frac{(\alpha^2+\nu^2\alpha)}{2}
-\frac{(\alpha+\nu^2)^2}{4}
\Bigr]
\\&=
\frac{\lambda_{1}^2}{d\omega^2}\Big[
-\omega^2+\frac{(\nu^4+4\omega^2+\nu^2\alpha)}{2}
-\frac{(\alpha^2+\nu^4+2\alpha \nu^2)}{4}
\Bigr]
\\&=
\frac{\lambda_{1}^2}{d\omega^2}\Big[
-\omega^2+\frac{(\nu^4+4\omega^2+\nu^2\alpha)}{2}
-\frac{(\nu^4+4\omega^2+\nu^4+2\alpha \nu^2)}{4}
\Bigr]
\\&=
\frac{\lambda_{1}^2}{d\omega^2}\Big[
-\omega^2+\frac{(2\nu^4+8\omega^2+2\nu^2\alpha)}{4}
-\frac{(2\nu^4+4\omega^2+2\alpha \nu^2)}{4}
\Bigr]=0,\hs \text{\footnotesize qed}.
\end{align*}
Moreover we have
\begin{align*}
\tilde q_{23}&=-\frac{\lambda_{2}^2}{d}(1+\nu^2)+\omega c\lambda_{2}^2
-\frac{\omega \lambda_{2}}{\lambda_{1}}+\frac{cd\lambda_{2}}{\lambda_{1}}
+\lambda_{2}^2\omega c-\lambda_{2}^2c^2 d
\\
&=\frac{\lambda_{2}^2}{d}\Big[
-1-\nu^2+2\omega cd-\frac{\omega d}{\lambda_{2}\lambda_{1}}+\frac{cd^2}{\lambda_{2}\lambda_{1}}-c^2 d^2
\Bigr]
\\&=
\frac{\lambda_{2}^2}{d}\Big[
-1-\nu^2+\alpha +\nu^2-\alpha+\frac{(\alpha+\nu^2)}{2\omega}\frac{\alpha}
{\omega}-\frac{(\alpha+\nu^2)^2}{4\omega^2}
\Bigr]
\\&=
\frac{\lambda_{2}^2}{d\omega^2}\Big[
-\omega^2+\frac{(\alpha^2+\nu^2\alpha)}{2}
-\frac{(\alpha+\nu^2)^2}{4}
\Bigr]=0,\hs
\text{\footnotesize from the previous computation}.
\end{align*}
We know now that $\tilde Q$ is indeed diagonal.
We calculate
$$
\tilde{q}_{44}=\frac{\lambda_{1}^2(1-\nu^2)}{d^2}+\frac{2c\lambda_{1}^2\omega}{d}+c^2\lambda_{1}^2
=
\frac{\lambda_{1}^2}{d^2}
\Bigl[
1-\nu^2+2\omega cd+c^2 d^2
\Bigr]
=
\frac{\lambda_{1}^2}{d^2}
\Bigl[
1-\nu^2+\alpha+\nu^2+\frac{(\alpha+\nu^2)^2}{4\omega^2}
\Bigr]
$$
$$
\tilde{q}_{44}=
\frac{\lambda_{1}^2}{\omega^2d^2}
\Bigl[
\omega^2+\alpha\omega^2+\frac{(\nu^4+4\omega^2+\nu^4+2\alpha \nu^2)}{4}
\Bigr]
=
\frac{\lambda_{1}^2}{\omega^2d^2}
\Bigl[
2\omega^2+\alpha\omega^2+\frac{(\nu^4+\alpha \nu^2)}{2}
\Bigr].
$$
Since
 $\frac{\lambda_{1}^2}{\omega^2d^2}=\frac{4}{\gamma^2\lambda_{2}^2\omega^2}
=\frac{1}{\alpha^2\lambda_{2}^2}=\frac{2\alpha}{\alpha^2(\alpha+2\omega^2+\nu^2)}$, we have
$$
\tilde{q}_{44}=
\frac{1}{\alpha(\alpha+2\omega^2+\nu^2)}
\Bigl[
4\omega^2+2\alpha\omega^2+{\nu^4+\alpha \nu^2}
\Bigr]=\frac{\alpha^2+2\alpha\omega^2+\alpha\nu^2}{\alpha^2+2\alpha\omega^2+\alpha\nu^2}=1.
$$
Analogously, we have
$$
\tilde{q}_{33}=\frac{\lambda_{2}^2(1+\nu^2)}{d^2}-\frac{2c\lambda_{2}^2\omega}{d}+c^2\lambda_{2}^2
=
\frac{\lambda_{2}^2}{d^2}
\Bigl[
1+\nu^2-2\omega cd+c^2 d^2
\Bigr]
=
\frac{\lambda_{2}^2}{d^2}
\Bigl[
1+\nu^2-\alpha-\nu^2+\frac{(\alpha+\nu^2)^2}{4\omega^2}
\Bigr]
$$
$$
\tilde{q}_{33}=
\frac{\lambda_{2}^2}{\omega^2d^2}
\Bigl[
\omega^2-\alpha\omega^2+\frac{(\nu^4+4\omega^2+\nu^4+2\alpha \nu^2)}{4}
\Bigr]
=
\frac{\lambda_{2}^2}{\omega^2d^2}
\Bigl[
2\omega^2-\alpha\omega^2+\frac{(\nu^4+\alpha \nu^2)}{2}
\Bigr].
$$
Since
 $\frac{\lambda_{2}^2}{\omega^2d^2}=\frac{4}{\gamma^2\lambda_{1}^2\omega^2}
=\frac{1}{\alpha^2\lambda_{1}^2}=
\frac{2\alpha}{\alpha^2(\alpha-2\omega^2+\nu^2)}$, we have
$$
\tilde{q}_{33}=
\frac{1}{\alpha(\alpha-2\omega^2+\nu^2)}
\Bigl[
4\omega^2-2\alpha\omega^2+{\nu^4+\alpha \nu^2}
\Bigr]=\frac{\alpha^2-2\alpha\omega^2+\alpha\nu^2}{\alpha^2-2\alpha\omega^2+\alpha\nu^2}=1.
$$
We calculate
\begin{align*}
&\tilde{q}_{11}=\lambda_{1}^2(1-\nu^2)-2\frac{\omega d\lambda_{1}}{\lambda_{2}}
+2\lambda_{1}^2 cd \omega+\frac{d^2}{\lambda_{2}^2}-2\frac{cd^2\lambda_{1}}{\lambda_{2}}+\lambda_{1}^2 c^2d^2
\\
&\tilde{q}_{11}=\lambda_{1}^2
\Bigl[(1-\nu^2)-\frac{2\omega d}{\lambda_{1}\lambda_{2}}
+2cd\omega+\frac{d^2}{\lambda_{1}^2\lambda_{2}^2}-2\frac{cd^2}{\lambda_{1}\lambda_{2}}+c^2d^2\Bigr]
\\
&\tilde{q}_{11}=\lambda_{1}^2
\Bigl[(1-\nu^2)-2\alpha
+\alpha+\nu^2+
\frac{\alpha^2}{\omega^2}
-2\frac{\alpha+\nu^2}{2\omega}\frac{\alpha}{\omega}
+\frac{(\alpha+\nu^2)^2}{4\omega^2}\Bigr]
\\
&\tilde{q}_{11}=\frac{\lambda_{1}^2}{\omega^2}
\Bigl[(1-\alpha)\omega^2
+
{\alpha^2}
-\alpha^2-\alpha\nu^2
+\frac{(\alpha+\nu^2)^2}{4}\Bigr]
\\
&\tilde{q}_{11}=\frac{\alpha-2\omega^2+\nu^2}{2\alpha\omega^2}
\Bigl[\omega^2-\alpha\omega^2
-\alpha\nu^2
+\frac{\nu^4+4\omega^2+\nu^4+2\alpha\nu^2}{4}\Bigr]
\\
&\tilde{q}_{11}=\frac{\alpha-2\omega^2+\nu^2}{2\alpha\omega^2}
\Bigl[2\omega^2-\alpha\omega^2
-\frac12\alpha\nu^2
+\frac{\nu^4}{2}\Bigr].
\end{align*}
More calculations:
\begin{multline*}
(\alpha-2\omega^2+\nu^2)(2\omega^2+\frac{\nu^4}{2}-\alpha(\omega^2+\frac{\nu^2}{2}))
\\=(\nu^2-2\omega^2)(2\omega^2+\frac{\nu^4}{2})
-(\nu^4+4\omega^2)(\omega^2+\frac{\nu^2}{2})
+\alpha\Bigl(
2\omega^2+\frac{\nu^4}{2}+
(\omega^2+\frac{\nu^2}{2})(2\omega^2-\nu^2)
\Bigr)
\\
=-8\omega^4-2\omega^2\nu^4+\alpha(2\omega^4+2\omega^2)
\end{multline*}
which is equal to
$$
2\alpha \omega^2(1+\omega^2-\alpha)=\alpha(2\omega^4+2\omega^2)-2\alpha^2\omega^2=\alpha(2\omega^4+2\omega^2)-2\omega^2(\nu^4+4\omega^2),
$$
proving thus that
$
\tilde{q}_{11}=1+\omega^2-\alpha.
$
The previous calculations and
\eqref{2.parone} give
$\varphi
\tilde{q}_{22}=1+\omega^2+\alpha,
$
completing the proof of the lemma.
\subsubsection{On the symplectic relationships in Lemma \ref{2.lem.effectq}}\label{a.double}
The reader is invited to check the following formulas\footnote{This is indeed double-checking since those formulas are  proven in  section 2.},
with the notations of lemma \ref{2.lem.effectq}:
{\small \begin{gather*}
\bigl\{\xi_{1}-\bigl(\frac{\alpha-\nu^2}{2\omega}\bigr)x_{2},
\xi_{2}+\bigl(\frac{\alpha+\nu^2}{2\omega}\bigr)x_{1}\bigr\}=\alpha\omega^{-1}
,
\bigl\{\xi_{2}-\bigl(\frac{\alpha-\nu^2}{2\omega}\bigr)x_{1},
\xi_{1}+\bigl(\frac{\alpha+\nu^2}{2\omega}\bigr)x_{2}\bigr\}=\alpha\omega^{-1},
\\
\bigl\{\xi_{1}-\bigl(\frac{\alpha-\nu^2}{2\omega}\bigr)x_{2},
\xi_{1}+\bigl(\frac{\alpha+\nu^2}{2\omega}\bigr)x_{2}\bigr\}=0
,
\bigl\{\xi_{1}-\bigl(\frac{\alpha-\nu^2}{2\omega}\bigr)x_{2},
\xi_{2}-\bigl(\frac{\alpha-\nu^2}{2\omega}\bigr)x_{1}\bigr\}=0,
\\
\bigl\{\xi_{2}+\bigl(\frac{\alpha+\nu^2}{2\omega}\bigr)x_{1},
\xi_{1}+\bigl(\frac{\alpha+\nu^2}{2\omega}\bigr)x_{2}\bigr\}=0
,
\bigl\{\xi_{2}+\bigl(\frac{\alpha+\nu^2}{2\omega}\bigr)x_{1},
\xi_{2}-\bigl(\frac{\alpha-\nu^2}{2\omega}\bigr)x_{1}\bigr\}=0,
\end{gather*}
}
as well as
\begin{multline*}
\bigl(\frac{\alpha-2\omega^2+\nu^2}{2\alpha}\bigr)^{1/2}
\bigl(\frac{\alpha+2\omega^2-\nu^2}{2\alpha\mu_{2}^2}\ep^2\bigr)^{1/2}
\alpha\omega^{-1}
=2^{-1}\ep\mu_{2}^{-1}\omega^{-1}\bigl(\alpha^2-(2\omega^2-\nu^2)^2\bigr)^{1/2}
\\
=2^{-1}\ep\mu_{2}^{-1}\omega^{-1}
\bigl(4\omega^2-4\omega^4+4\omega^2\nu^2\bigr)^{1/2}
=\ep\mu_{2}^{-1}(1-\omega^2+\nu^2)^{1/2}
=\ep\mu_{2}^{-1}(2\nu^2+\ep^2)^{1/2}=\mu_{1}
\end{multline*}
and
$$
\bigl(\frac{\alpha+2\omega^2+\nu^2}{2\alpha}\bigr)^{1/2}
2^{1/2}\omega
\bigl(\frac{1+\omega^2+\alpha}{\alpha(\alpha+2\omega^2+\nu^2)}\bigr)^{1/2}
\alpha\omega^{-1}
=(1+\omega^2+\alpha)^{1/2}=\mu_{2}.
$$

\bibliography{abl}
\nocite{*}
\bibliographystyle{amsplain}
\end{document}